\pdfoutput=1
\documentclass[11pt]{article}

\usepackage[utf8]{inputenc}
\usepackage[T1]{fontenc}
\usepackage{amsmath,amssymb,amsthm,mathtools}
\usepackage{geometry}
\usepackage{enumitem}
\usepackage{hyperref}
\usepackage{microtype}
\usepackage{booktabs}

\hypersetup{
  pdftitle={Quaternionic LVM Manifolds},
  pdfauthor={Alberto Verjovsky},
  colorlinks=true,
  linkcolor=blue,
  citecolor=blue,
  urlcolor=blue
}

\newtheorem{theorem}{Theorem}[section]
\newtheorem{proposition}[theorem]{Proposition}
\newtheorem{lemma}[theorem]{Lemma}
\newtheorem{corollary}[theorem]{Corollary}

\theoremstyle{definition}
\newtheorem{definition}[theorem]{Definition}
\newtheorem{problem}[theorem]{Problem}
\theoremstyle{remark}
\newtheorem{remark}[theorem]{Remark}

\newcommand{\Hh}{\mathbb H}
\newcommand{\Cc}{\mathbb C}
\newcommand{\Rr}{\mathbb R}
\newcommand{\Ss}{\mathbb S}
\newcommand{\Zz}{\mathbb Z}
\newcommand{\Sp}{\operatorname{Sp}}
\newcommand{\conv}{\operatorname{conv}}
\newcommand{\relint}{\operatorname{relint}}
\newcommand{\rank}{\operatorname{rank}}
\newcommand{\cS}{\mathcal S}
\newcommand{\cZ}{\mathcal Z}

\title{\textbf{Quaternionic LVM Manifolds}}
\author{Alberto Verjovsky}

\date{}

\begin{document}
\maketitle

\begin{center}
\emph{Dedicated to the memory of Michael ``Misha'' Kapovich\\
(March 13, 1963--June 16, 2026),\\
a dear friend, a wonderful human being, and a superb mathematician.}
\end{center}
\medskip

\begin{abstract}
We study a quaternionic counterpart of the quotient construction underlying
LVM manifolds.  From an admissible configuration
\[
\Lambda=(\lambda_1,\ldots,\lambda_n)\subset\Hh^m\simeq\Rr^{4m},
\qquad
n>4m,
\]
we obtain a smooth compact intersection of real quadrics
$Z_{\Hh}(\Lambda)\subset\Hh^n$ whose $\Sp(1)^n$ orbit space is a simple
convex polytope.  A canonical action by positive real coordinate dilations
has $Z_{\Hh}(\Lambda)$ as a global normalized transversal and, after
projectivization, gives the compact quotient
\[
N_{\Hh}(\Lambda)=Z_{\Hh}(\Lambda)/\Sp(1),
\]
which we call an LVMQ manifold.

The total space is the quaternionic moment-angle polyhedral product
$(D^4,\Ss^{3})^{K_{P_\Lambda}}$.  Every LVMQ manifold is $2$-connected; in
the absence of indispensable coordinates it is $3$-connected and
\[
H^4(N_{\Hh}(\Lambda);\Zz)\cong\Zz,
\qquad
p_1(TN_{\Hh}(\Lambda))=2(n-2)e.
\]
Here $p_1$ denotes the first Pontryagin class and $e$ is the Euler class
of the principal $\Sp(1)$-bundle
$Z_{\Hh}(\Lambda)\to N_{\Hh}(\Lambda)$.  For an explicit infinite
sphere-product family, the ambient Kraines form restricts to a
$4$-plectic form and the resulting LVMQ manifolds are quaternionic toric
in the sense of Gentili--Gori--Sarfatti.
We also study the complementary Poincar\'e domain and its trace
foliations.  For the quaternionic rank-one problem, Frobenius integrability
of an invertible linear field forces the scalar Hopf model, while small
nonlinear integrable perturbations give Hopf-like $\Ss^{3}$-fibrations over
smooth manifolds homotopy equivalent to quaternionic projective space.
\end{abstract}

\medskip
\noindent\textbf{Keywords.}
LVM manifolds; quaternionic moment-angle manifolds; intersections of quadrics;
polyhedral products; quaternionic projective space; $\Sp(1)$ actions;
convex polytopes; $4$-plectic geometry.

\medskip
\noindent\textbf{2020 Mathematics Subject Classification.}
Primary 57S15, 57N65; Secondary 53C26, 53D20, 55N10, 52B11.

\section{Introduction}\label{sec:intro}

The LVM construction arose from the study of compact complex manifolds
obtained as quotients of open subsets of projective space by linear
holomorphic actions \cite{LVM,Meer,Verjovsky}.  A second description, through intersections of
real quadrics, revealed a close relation with convex polytopes and
moment-angle manifolds.  The interaction between these descriptions is
one of the useful features of the theory: the complex quotient is
controlled by an elementary but powerful convexity argument, while the
topology of the intersection of quadrics is encoded by the associated
simple polytope.

LVM manifolds are important in complex geometry because they provide a
large and explicit class of compact non-K\"ahler manifolds, including
generalizations of the classical Hopf and Calabi--Eckmann examples.  Under
the usual rationality condition, the canonical foliation has compact
complex-torus leaves and the quotient is a projective toric variety; thus
one obtains holomorphic torus fibrations over toric varieties.  An important
converse is that every projective toric variety with at most quotient
singularities arises in this way as the quotient of an LVM manifold by its
canonical foliation; see \cite{MV04,MV08}.  Without this rationality
condition the leaves need not be compact and may be dense.
This passage from compact torus fibers to irrational foliations is one of
the starting points for the theory of non-commutative toric varieties
developed by Katzarkov, Lupercio, Meersseman and Verjovsky
\cite{KLMV14,KLMV21}.  In this sense LVM theory connects non-K\"ahler
complex geometry, toric geometry, moment-angle topology and
non-commutative geometry.

The purpose of the present paper is to determine which part of this
construction survives over the quaternions and to study the topology of
the resulting compact quotients.  A direct replacement of the complex
linear action by a quaternionic linear action is generally unavailable.
Quaternionic multiplication is noncommutative, so the corresponding
linear vector fields acquire commutator terms.  In addition, integer
matrices describe homomorphisms between ordinary tori but not arbitrary
homomorphisms between products of $\Sp(1)$.  Thus the holomorphic
foliation and the lattice construction of the complex theory do not have
literal quaternionic analogues.

Two distributions occur in the paper.  The first is the orbit
distribution of the abelian real action by positive coordinate dilations.
Its infinitesimal generators commute, and it is the distribution used in
the Siegel and real Poincar\'e constructions.  The second is the real
four-plane distribution $X\Hh$ associated with a single vector field
$X$ on a quaternionic vector space.  Its integrability is a separate
Frobenius condition.

The main observation of this paper is that the convex part of the LVM
construction does survive without alteration.  One simply regards
\[
\Hh^m\simeq\Rr^{4m}
\]
as a real vector space.  If
\[
\Lambda=(\lambda_1,\ldots,\lambda_n)\subset\Rr^{4m}
\]
satisfies the natural Siegel and weak hyperbolicity conditions, the
positive real coordinate dilations
\[
(q_1,\ldots,q_n)\longmapsto
\left(e^{\langle\lambda_1,T\rangle}q_1,\ldots,
e^{\langle\lambda_n,T\rangle}q_n\right),
\qquad T\in\Rr^{4m},
\]
form an abelian action.  The squared norm along an orbit is strictly
convex and proper.  Its unique critical point is characterized by
\[
\sum_i\lambda_i|q_i|^2=0.
\]
This gives a global transversal theorem and, after quotienting by
quaternionic scalar multiplication, a compact manifold
\[
N_{\Hh}(\Lambda)=Z_{\Hh}(\Lambda)/\Sp(1).
\]
We call it an LVMQ manifold.  The notation is intended only to emphasize
its origin in the LVM construction and the use of quaternionic
coordinates.

The total space $Z_{\Hh}(\Lambda)$ is the quaternionic moment-angle
manifold associated with the simple polytope $P_\Lambda$, equivalently
the polyhedral product $(D^4,\Ss^{3})^{K_{P_\Lambda}}$; here
$K_{P_\Lambda}$ is the simplicial complex determined by the face
structure of $P_\Lambda$, and the precise definition of the polyhedral
product is recalled in Section~7.  See \cite{BP,BBCG,BBCG2}.  Quaternionic moment-angle manifolds and quoric
quotients have been studied from several points of view.  Here the
additional structure is the diagonal left $\Sp(1)$ quotient arising
from the LVM construction.

\subsection*{Main results}

The first result is the regularity theorem.  The defining equations are
real, and weak hyperbolicity gives the required rank through convex
geometry and Carath\'eodory's theorem.  We then prove that the
coordinatewise $\Sp(1)^n$ orbit space of $Z_{\Hh}(\Lambda)$ is the
simple polytope
\[
P_\Lambda=
\left\{x\in\Rr_{\geq0}^n:
\sum_i\lambda_i x_i=0,\quad\sum_i x_i=1\right\}.
\]
The face lattice is read from the subconfigurations whose convex hulls
contain the origin.

The central result is the global transversal theorem.  If
\[
\cS_\Lambda=
\left\{q\in\Hh^n:
0\in\conv\{\lambda_i:q_i\neq0\}\right\},
\]
then
\[
\cS_\Lambda
\cong
Z_{\Hh}(\Lambda)\times\Rr^{4m}\times\Rr_{>0}.
\]
Consequently, if
\[
V_\Lambda=\cS_\Lambda/\Hh^\ast\subset\Hh P^{n-1},
\]
then
\[
V_\Lambda\cong N_{\Hh}(\Lambda)\times\Rr^{4m},
\qquad
V_\Lambda/\Rr^{4m}\cong N_{\Hh}(\Lambda).
\]
Thus the LVMQ manifold is obtained as the Hausdorff leaf space of a
canonical real analytic foliation of an open subset of quaternionic
projective space.

There is a complementary picture when the configuration lies in the
Poincar\'e domain,
\[
0\notin\conv\{\lambda_1,\ldots,\lambda_n\}.
\]
The motivation for this part comes directly from the work of Arroyo,
Cabrera, Seade and Verjovsky on higher-rank diagonal holomorphic
$\Cc^k$-actions in the Poincar\'e domain \cite{ACSV}.  In that paper the
intersection of complex orbits with Euclidean spheres produces a singular
trace foliation and a global conical model.  The present section asks for
the corresponding construction for the canonical real action associated
with a quaternionic configuration.

A separating vector $\xi$ makes the norm strictly monotone along the
one-parameter subgroup $\Rr\xi$.  Every such orbit meets every sphere
exactly once.  The punctured orbit foliation is therefore a cone over a
singular trace foliation of $\Ss^{4n-1}$.  In contrast with the complex
exponential action studied in \cite{ACSV}, no period lattice appears: the
effective isotropy is a real vector subspace and every trace leaf is
Euclidean.  Thus this part of the paper is the real counterpart, in quaternionic
coordinates, of the construction in \cite{ACSV}.  The foliation generated
by a single quaternionic vector field is treated separately.

The polyhedral-product description gives strong topological
consequences.  We show that every LVMQ manifold is $2$-connected.  If
there are no indispensable coordinates, then it is in fact
$3$-connected and
\[
H^4(N_{\Hh}(\Lambda);\Zz)\cong\Zz.
\]
The generator is the Euler class
\[
e\in H^4(N_{\Hh}(\Lambda);\Zz)
\]
of the principal bundle
\[
\Ss^{3}\longrightarrow Z_{\Hh}(\Lambda)
\longrightarrow N_{\Hh}(\Lambda).
\]
The embedding
\[
N_{\Hh}(\Lambda)\hookrightarrow\Hh P^{n-1}
\]
has trivial normal bundle of rank $4m$, which yields
\[
TN_{\Hh}(\Lambda)\oplus\Rr^{4m}
\cong i^\ast T\Hh P^{n-1}.
\]
In particular
\[
p_1(TN_{\Hh}(\Lambda))=2(n-2)e.
\]
These facts immediately rule out symplectic, K\"ahler and hyperk\"ahler
structures on any positive-dimensional LVMQ manifold.

Two special cases describe the effect of noncommutativity.  If all coefficients belong to one fixed complex slice
$\Cc_I\subset\Hh$, the real rank drops and the quaternionic intersection
becomes the ordinary complex intersection associated with the duplicated
configuration
\[
(\lambda_1,\lambda_1,\ldots,\lambda_n,\lambda_n).
\]
If the slice is allowed to depend on the coordinate, the configuration
may retain full real rank while certain right-linear vector fields still
commute.  This gives a family between the complex limit and the general
quaternionic case.

The paper also discusses wall-crossing and the relation with
quaternionic toric geometry.  The combinatorial part of the Bosio--Meersseman
wall-crossing theorem is real and therefore carries over directly
\cite{BM}.
The differential-geometric relation with quaternionic toric manifolds is
more subtle.  In the $4$-plectic approach of Gentili--Gori--Sarfatti and
in the theory of local quaternionic toric actions, additional
characteristic data are required.  Our results indicate that the
degree-four Euler class of the diagonal $\Ss^{3}$ bundle is one of the
natural invariants to compare with those theories.
Nevertheless, for the explicit sphere-product family constructed later,
the restriction of the Kraines form is $4$-plectic and yields a
quaternionic toric structure.

\subsection*{Relation with previous work}

The relation between intersections of real quadrics, simple convex
polytopes and complex moment-angle manifolds was developed in detail by
Bosio and Meersseman \cite{BM}, building on the LVM construction
\cite{LVM,Meer} and on toric topology \cite{BP}.  The polyhedral-product description used here is the
quaternionic instance of the general construction
\[
(D^{r+1},\Ss^r)^K.
\]
Quaternionic versions have appeared in the literature on quoric
manifolds and quaternionic moment-angle manifolds \cite{Hop,Scott}.
Recent work of Gkeneralis studies their equivariant rigidity
\cite{Gk,GP}, while work of Batakidis and Gkeneralis develops local
quaternionic toric actions and emphasizes characteristic data and
degree-four Euler classes \cite{BG}.  The $4$-plectic approach to
quaternionic toric manifolds was introduced by Gentili, Gori and Sarfatti
\cite{GGS}, following earlier work of Foth on tetraplectic structures and
tri-moment maps \cite{Foth}.

Our construction differs from these approaches in one essential point:
the compact object of interest is the diagonal $\Sp(1)$ quotient of the
quaternionic moment-angle manifold, obtained from a global transversal
to a real action in quaternionic projective space.  This is the feature
which comes directly from the LVM construction.

\subsection*{Organization of the paper}

Section~2 proves smoothness of the quaternionic intersection of quadrics.
Section~3 describes the orbit polytope and its faces.  Section~4 proves
the global transversal theorem.  Section~5 introduces the LVMQ quotient
and its projective description.  Section~6 computes the isotropy of the
coordinatewise $\Sp(1)^n$ action on $N_{\Hh}(\Lambda)$.  Sections~7--9 develop the topology of the
total space and of the quotient, including connectivity and
characteristic classes.  Section~10 treats the minimal simplex case and special
metrics.  Section~11 discusses obstructions to symplectic and related
geometries.  Section~12 studies commuting coefficients and the
fixed-complex-slice limit.  The final sections concern wall-crossing and quaternionic toric
geometry.  The last main section develops the complementary
Poincar\'e domain and its singular trace foliation on $\Ss^{4n-1}$,
thereby completing the Poincar\'e--Siegel picture for the real action.
The same section treats the quaternionic rank-one distribution
$X\Hh$, proves the linear Frobenius rigidity theorem
in the isolated-singularity case, and identifies the resulting spherical
model in every dimension with the Hopf fibration
$\Ss^{3}\to \Ss^{4n-1}\to\Hh P^{n-1}$.  We then prove a nonlinear local
stability theorem: the quotient of a sufficiently small trace sphere is
a smooth Hausdorff manifold homotopy equivalent to $\Hh P^{n-1}$,
while its smooth structure is left open.  The final section contains
further structural results and open problems.

\section{Admissible configurations and smoothness}

Throughout the nondegenerate theory we assume $m\ge1$.  The case $m=0$
reduces to ordinary quaternionic projective space and will not be considered.
Put $p=4m$ and identify $\Hh^m$ with $\Rr^p$.  Admissibility
itself forces
\[
n>p=4m.
\]
Indeed, the Siegel condition puts the origin in the convex hull of the whole
configuration, and weak hyperbolicity applied to $I=[n]$ gives the stated
inequality.

\begin{definition}
A configuration
\[
\Lambda=(\lambda_1,\ldots,\lambda_n)\subset\Rr^p
\]
is \emph{admissible} if the following conditions hold:
\[
0\in\conv\{\lambda_1,\ldots,\lambda_n\},
\]
and
\[
0\in\conv\{\lambda_i:i\in I\}
\quad\Longrightarrow\quad
|I|>p.
\]
The first condition is the Siegel condition and the second is weak
hyperbolicity.
\end{definition}

We assume that the affine span of the $\lambda_i$ is all of $\Rr^p$.
Equivalently, the augmented vectors
\[
\widetilde\lambda_i=(\lambda_i,1)\in\Rr^{p+1}
\]
span $\Rr^{p+1}$.

Define
\[
Z_{\Hh}(\Lambda)=
\left\{q=(q_1,\ldots,q_n)\in\Hh^n:
\sum_i\lambda_i|q_i|^2=0,\quad
\sum_i|q_i|^2=1\right\}.
\]

\begin{theorem}\label{thm:smooth}
If $\Lambda$ is admissible and has full affine rank $p$, then
$Z_{\Hh}(\Lambda)$ is a compact smooth manifold of dimension
\[
4n-p-1=4n-4m-1.
\]
\end{theorem}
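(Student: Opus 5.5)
We regard $Z_{\Hh}(\Lambda)$ as the zero set of the smooth map
\[
F:\Hh^n\simeq\Rr^{4n}\longrightarrow\Rr^{p+1},\qquad
F(q)=\Big(\sum_i\lambda_i|q_i|^2,\ \sum_i|q_i|^2-1\Big)=\sum_i|q_i|^2\,\widetilde\lambda_i-(0,1).
\]
We apply the regular value theorem. Compactness is immediate, since $Z_{\Hh}(\Lambda)$ is closed and contained in the unit sphere $\Ss^{4n-1}$. Nonemptiness follows from the Siegel condition: write $0=\sum_i t_i\lambda_i$ with $t_i\ge0$ and $\sum_i t_i=1$, then choose $q_i$ with $|q_i|^2=t_i$. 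It therefore remains to show that $dF_q$ is surjective at every $q\in Z_{\Hh}(\Lambda)$. Once this is done, the dimension is $4n-(p+1)=4n-4m-1$.

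\textbf{Computing the differential.} For a tangent vector $v=(v_1,\dots,v_n)$ we have
\[
dF_q(v)=\sum_i 2\,\mathrm{Re}(\bar q_i v_i)\,\widetilde\lambda_i .
\]
If $q_i\neq0$, the real number $2\,\mathrm{Re}(\bar q_i v_i)$ ranges over all of $\Rr$ as $v_i$ varies, and it vanishes identically if $q_i=0$. Hence
\[
\operatorname{im} dF_q=\operatorname{span}_{\Rr}\{\widetilde\lambda_i: i\in J\},\qquad J=J(q)=\{i:q_i\neq0\}.
\]
So the problem reduces to a purely real, convex-geometric statement: for every $q\in Z_{\Hh}(\Lambda)$, the augmented vectors $\widetilde\lambda_i$ with $i\in J(q)$ span $\Rr^{p+1}$.

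\textbf{The key step (Carathéodory plus weak hyperbolicity).} This is where the real content lies. Suppose, for contradiction, that the vectors $\{\widetilde\lambda_i\}_{i\in J}$ span a proper subspace. Equivalently, the affine span $A$ of $\{\lambda_i\}_{i\in J}$ has dimension $d<p$. Since $q\in Z_{\Hh}(\Lambda)$, the relation $\sum_{i\in J}|q_i|^2\lambda_i=0$ with $\sum_{i\in J}|q_i|^2=1$ shows that $0\in\conv\{\lambda_i:i\in J\}\subset A$. Carathéodory's theorem, applied inside the $d$-dimensional affine space $A$, yields a subset $I\subset J$ with $|I|\le d+1\le p$ and $0\in\conv\{\lambda_i:i\in I\}$. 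This contradicts weak hyperbolicity, which requires $|I|>p$. Hence $d=p$, the augmented vectors span, and $dF_q$ has rank $p+1$.

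The main point requiring care is exactly this key step. In particular, Carathéodory must be applied in the affine span of the subconfiguration, not in $\Rr^p$ itself, because in $\Rr^p$ it only gives $|I|\le p+1$, which is not enough. Everything else is a routine computation.

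Finally, $Z_{\Hh}(\Lambda)=F^{-1}(0)$ is a regular level set, so it is a smooth closed submanifold of $\Rr^{4n}$ of dimension $4n-p-1$. The full affine rank hypothesis is not strictly needed for regularity once the key step is in hand, but it is consistent with the count $p+1$ of independent equations.
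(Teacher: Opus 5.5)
Your proof is correct and follows the paper's argument essentially step by step: the same map $F$, the identification of $dF_q$ being onto with the augmented vectors over the support spanning $\Rr^{p+1}$, and Carath\'eodory applied inside the affine span of the support to contradict weak hyperbolicity. Your additions are correct refinements that the paper leaves implicit: nonemptiness via the Siegel condition, and the observation that full affine rank then follows from admissibility.
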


\begin{proof}
Consider
\[
F:\Hh^n\longrightarrow\Rr^p\times\Rr,
\qquad
F(q)=
\left(
\sum_i\lambda_i|q_i|^2,\,
\sum_i|q_i|^2
\right).
\]
Then
\[
Z_{\Hh}(\Lambda)=F^{-1}(0,1).
\]
For $q\in Z_{\Hh}(\Lambda)$ let
\[
I(q)=\{i:q_i\neq0\}.
\]
The differential is
\[
dF_q(h)
=
2\left(
\sum_i\lambda_i\operatorname{Re}(\overline q_i h_i),\,
\sum_i\operatorname{Re}(\overline q_i h_i)
\right).
\]
For $i\in I(q)$ the real number
$\operatorname{Re}(\overline q_i h_i)$ can be prescribed arbitrarily by
taking $h_i$ proportional to $q_i$.  Thus $dF_q$ is onto if and only if
the augmented vectors
\[
\{\widetilde\lambda_i:i\in I(q)\}
\]
span $\Rr^{p+1}$.

Since
\[
0=\sum_{i\in I(q)}|q_i|^2\lambda_i,\qquad
1=\sum_{i\in I(q)}|q_i|^2,
\]
the origin belongs to the convex hull of the vectors in the support.
If their affine span had dimension at most $p-1$, Carath\'eodory's theorem
in that affine span would place the origin in the convex hull of at most
$p$ of them.  This contradicts weak hyperbolicity.  Hence the augmented
vectors have rank $p+1$.

Thus $(0,1)$ is a regular value of $F$.  Compactness follows from the sphere
equation and the dimension is
\[
4n-(p+1).
\]
\end{proof}

\begin{corollary}\label{cor:support}
Every point of $Z_{\Hh}(\Lambda)$ has at least $4m+1$ nonzero coordinates.
\end{corollary}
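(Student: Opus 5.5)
The bound comes straight from the two equations defining $Z_{\Hh}(\Lambda)$ together with weak hyperbolicity; no smoothness is needed. Fix $q\in Z_{\Hh}(\Lambda)$ and let $I(q)=\{i:q_i\neq0\}$ be its support, as in the proof of Theorem~\ref{thm:smooth}. We must show $|I(q)|\ge p+1=4m+1$.

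First, the normalization $\sum_i|q_i|^2=1$ shows that $I(q)$ is nonempty. Put $t_i=|q_i|^2$ for $i\in I(q)$. These weights are strictly positive and satisfy
\[
\sum_{i\in I(q)}t_i=1.
\]

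Second, the moment equation $\sum_i\lambda_i|q_i|^2=0$ only involves indices in the support, so it reads
\[
\sum_{i\in I(q)}t_i\lambda_i=0.
\]
This exhibits the origin as a convex combination of the points $\lambda_i$ with $i\in I(q)$, that is,
\[
0\in\conv\{\lambda_i:i\in I(q)\}.
\]

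Third, apply the weak hyperbolicity condition in the definition of admissibility with $I=I(q)$. It gives $|I(q)|>p=4m$, and hence $|I(q)|\ge 4m+1$.

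None of these steps is a real obstacle. The only point needing care is that weak hyperbolicity applies to the actual support, which requires the weights $t_i$ to be nonnegative and sum to one; both follow immediately from the definition of $Z_{\Hh}(\Lambda)$.

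As a cross-check, the proof of Theorem~\ref{thm:smooth} gives the same conclusion by a rank count: there the augmented vectors $\widetilde\lambda_i$, $i\in I(q)$, span $\Rr^{p+1}$, which already forces $|I(q)|\ge p+1$.
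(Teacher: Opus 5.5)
Your argument is correct and uses the same ingredients as the paper, which gives no separate proof but reads the corollary off the proof of Theorem~\ref{thm:smooth}: the defining equations put the origin in the convex hull of the support, and the augmented support vectors there span a space of dimension $p+1$, which forces $|I(q)|\ge p+1$. Applying weak hyperbolicity directly to $I(q)$, as you do, skips the Carath\'eodory and rank step and is the more economical route.
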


\section{The orbit polytope}

A facet of a convex polytope is a face of codimension one.  A
$d$-dimensional polytope is called \emph{simple} if exactly $d$ facets
meet at every vertex.

Let
\[
P_\Lambda=
\left\{x=(x_1,\ldots,x_n)\in\Rr_{\geq0}^n:
\sum_i\lambda_i x_i=0,\quad
\sum_i x_i=1\right\}.
\]
Its dimension is
\[
d=n-4m-1.
\]

The group $\Sp(1)^n$ acts on the right by
\[
(q_1,\ldots,q_n)\cdot(u_1,\ldots,u_n)
=
(q_1u_1,\ldots,q_nu_n).
\]

\begin{theorem}\label{thm:polytope}
The map
\[
\mu(q)=\bigl(|q_1|^2,\ldots,|q_n|^2\bigr)
\]
has image $P_\Lambda$, and its fibres are exactly the
$\Sp(1)^n$ orbits.  Hence
\[
Z_{\Hh}(\Lambda)/\Sp(1)^n\cong P_\Lambda.
\]
\end{theorem}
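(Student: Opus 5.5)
The plan is to check the three assertions in turn: the image of $\mu$, the fibres of $\mu$, and the homeomorphism with the orbit space.

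\emph{Image.} First, $\mu(Z_{\Hh}(\Lambda))\subset P_\Lambda$. Indeed, for $q\in Z_{\Hh}(\Lambda)$ the coordinates $x_i=|q_i|^2$ are nonnegative, satisfy $\sum_i\lambda_ix_i=0$, and satisfy $\sum_ix_i=1$. Conversely, given $x\in P_\Lambda$, set $q_i=\sqrt{x_i}\in\Rr\subset\Hh$. Then $\mu(q)=x$, and the defining equations of $Z_{\Hh}(\Lambda)$ become exactly the equations of $P_\Lambda$. So $\mu$ is onto $P_\Lambda$.

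\emph{Fibres.} The map $\mu$ is constant on $\Sp(1)^n$ orbits, since $|q_iu_i|=|q_i|$ for unit quaternions $u_i$. Now suppose $\mu(q)=\mu(q')$. For each $i$ with $q_i\neq0$, put $u_i=q_i^{-1}q_i'$. This is a unit quaternion because $|q_i'|=|q_i|$, and it satisfies $q_iu_i=q_i'$. For each $i$ with $q_i=0$ we also have $q_i'=0$, so we may take $u_i=1$. Hence the fibres of $\mu$ are exactly the orbits. The noncommutativity of $\Hh$ causes no trouble here, because the action is coordinatewise and each $q_i$ is invertible when nonzero.

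\emph{Homeomorphism.} By the previous step, $\mu$ descends to a continuous bijection
\[
\bar\mu:Z_{\Hh}(\Lambda)/\Sp(1)^n\to P_\Lambda.
\]
The source is compact, being a quotient of the compact manifold $Z_{\Hh}(\Lambda)$ from Theorem~\ref{thm:smooth}. The target is Hausdorff as a subset of $\Rr^n$. A continuous bijection from a compact space to a Hausdorff space is a homeomorphism, so $\bar\mu$ is one.

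There is no serious obstacle in this argument. The one point that needs care is the identification of $P_\Lambda$ as a genuine polytope of dimension $d=n-4m-1$, and that is not part of what is asserted here. It follows from the full-rank hypothesis: the $p+1$ equations defining $P_\Lambda$ are independent, and Corollary~\ref{cor:support} shows that no coordinate is forced to vanish in a way that would cut down the dimension. Simplicity of $P_\Lambda$ would come from the transversality already established in the proof of Theorem~\ref{thm:smooth}, but the statement under consideration needs only the orbit-space identification.
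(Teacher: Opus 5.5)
Your proof is correct and follows the paper's argument for the fibres: you choose $u_i=q_i^{-1}q_i'$ on the support and anything off it, and you additionally spell out surjectivity via $q_i=\sqrt{x_i}$ and the compact-to-Hausdorff argument for the homeomorphism, both of which the paper leaves implicit. One small caveat on your closing aside, which the statement does not need: Corollary~\ref{cor:support} does not by itself give $\dim P_\Lambda=n-4m-1$, since that requires a point of $P_\Lambda$ with all $x_i>0$, and this follows instead from Lemma~\ref{lem:interior} (the origin lies in the interior of $\conv\Lambda$, so it is a convex combination of the $\lambda_i$ with strictly positive weights).
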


\begin{proof}
Only the statement about the fibres needs proof.  If $\mu(q)=\mu(q')$, then
$|q_i|=|q_i'|$ for every $i$.  For a nonzero coordinate there is a unit
quaternion $u_i$ such that $q_i'=q_i u_i$.  When $q_i=0$ the corresponding
factor is arbitrary.  Therefore $q$ and $q'$ lie in the same orbit.
\end{proof}

For $J\subset[n]$ set
\[
F_J=P_\Lambda\cap\{x_j=0:j\in J\}.
\]
We write $\operatorname{relint}$ for relative interior in the affine
span of a convex set.

\begin{proposition}\label{prop:faces}
The relative interior of $F_J$ is nonempty if and only if
\[
0\in\relint\conv\{\lambda_i:i\notin J\}.
\]
\end{proposition}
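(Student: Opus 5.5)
The plan is to reduce the statement to an elementary description of the relative interior of the convex hull of a finite set. Here ``the relative interior of $F_J$ is nonempty'' is read in the sense relevant for the face structure: $F_J$ contains a point $x$ with $x_j=0$ for $j\in J$ and $x_i>0$ for all $i\notin J$. Such points form the relative interior of the face cut out by exactly the coordinates in $J$, and they are the points whose $\mu$-preimage has support exactly $[n]\setminus J$, as in Theorem~\ref{thm:polytope}.

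\emph{Step 1 (key lemma).} For a finite set $S=\{s_1,\ldots,s_k\}\subset\Rr^p$, I will prove
\[
\relint\conv S=\Bigl\{\textstyle\sum_a t_a s_a:\ t_a>0,\ \sum_a t_a=1\Bigr\}.
\]
For the inclusion $\supseteq$, take $x=\sum t_a s_a$ with all $t_a>0$. Any nearby point $y$ of the affine span can be written as $y=x+\sum_a c_a s_a$ with $\sum_a c_a=0$ and $|c_a|$ small: the vectors $s_a-s_1$ span the direction space, and one can choose a linear right inverse of the map $(c_a)\mapsto\sum_a c_a s_a$ on $\{\sum_a c_a=0\}$. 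Then $y$ has coefficients $t_a+c_a$, which are still positive, so $y\in\conv S$. For the inclusion $\subseteq$, let $x\in\relint\conv S$ and let $b=\frac1k\sum_a s_a$ be the barycenter. For small $\varepsilon>0$ the point $z=x+\varepsilon(x-b)$ lies in $\conv S$, say $z=\sum_a r_a s_a$ with $r_a\ge0$. Then
\[
x=\frac{z+\varepsilon b}{1+\varepsilon}=\sum_a\frac{r_a+\varepsilon/k}{1+\varepsilon}\,s_a,
\]
and every coefficient is strictly positive. The only delicate point in the whole argument is this step, in particular the perturbation in the affine span when the $s_a$ are affinely dependent. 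Choosing a right inverse, or equivalently passing to an affinely independent subset spanning the same affine space, handles it.

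\emph{Step 2 (translation).} Apply the lemma to $S=\{\lambda_i:i\notin J\}$; repeated vectors cause no harm, since the lemma is stated for indexed families. Suppose $x\in F_J$ has $x_i>0$ for all $i\notin J$. Then $\sum_{i\notin J}x_i\lambda_i=0$ and $\sum_{i\notin J}x_i=1$, so $0$ is a strictly positive convex combination of $S$ and hence lies in $\relint\conv S$. Conversely, if $0\in\relint\conv S$, the lemma gives weights $t_i>0$ for $i\notin J$ with $\sum_i t_i\lambda_i=0$ and $\sum_i t_i=1$. Setting $x_i=t_i$ for $i\notin J$ and $x_j=0$ for $j\in J$ gives a point of $P_\Lambda$ lying in $F_J$ with exactly the required support.

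\emph{Step 3 (identification with the relative interior of the face).} Finally I will note that when $F_J$ contains such a point, the set of points of $F_J$ with all $x_i>0$ for $i\notin J$ is open in $\operatorname{aff}F_J$. This holds because $F_J$ is the polytope $P_\Lambda$ intersected with $\{x_j=0:j\in J\}$, and near such a point no other inequality is active. That set is therefore exactly $\relint F_J$, which justifies the interpretation fixed at the start.
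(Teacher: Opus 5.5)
Your argument follows the paper's proof: both identify the relevant points of $F_J$ with those of support exactly $[n]\setminus J$ and read the two affine equations as a strictly positive convex combination. Your Step~1 also proves the fact $\relint\conv S=\{\text{strictly positive convex combinations}\}$, which the paper uses without proof.

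One caveat concerns the forward direction. A nonempty convex set always has nonempty relative interior, so that direction literally says $F_J\neq\varnothing\Rightarrow 0\in\relint\conv\{\lambda_i:i\notin J\}$. Your Step~3 does not cover this, since it only treats the case where a point of exact support already exists. The implication genuinely needs weak hyperbolicity: for instance, any $x\in F_J$ gives, by Lemma~\ref{lem:interior}, $0\in\operatorname{int}\conv\{\lambda_i:x_i>0\}\subseteq\operatorname{int}\conv\{\lambda_i:i\notin J\}$. The paper's one-line proof also leaves this step implicit.
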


\begin{proof}
A point in the relative interior has
$x_j=0$ for $j\in J$ and $x_i>0$ for $i\notin J$.  The two affine equations
then say exactly that the origin is a convex combination with strictly
positive coefficients of the vectors with indices outside $J$.
\end{proof}

A coordinate $i$ is called \emph{indispensable} if $x_i>0$ on all of
$P_\Lambda$.  Equivalently, the equation $x_i=0$ does not define a facet.

\begin{proposition}\label{prop:simple}
The polytope $P_\Lambda$ is simple.  Indispensable coordinates correspond
to inequalities $x_i\ge0$ which never become active; they do not create
additional facets.
\end{proposition}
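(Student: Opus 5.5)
The plan is to show that at each vertex $v$ of $P_\Lambda$ exactly $d=n-4m-1$ of the inequalities $x_i\ge0$ are active, and that each of them defines a genuine facet. The key tool is again the Carath\'eodory argument from the proof of Theorem~\ref{thm:smooth}.

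\emph{Step 1: supports of vertices.} Let $v\in P_\Lambda$ and $I=\{i:v_i>0\}$. Since $0=\sum_{i\in I}v_i\lambda_i$ with positive weights, weak hyperbolicity gives $|I|\ge p+1$. By the argument in Theorem~\ref{thm:smooth}, the augmented vectors $\{\widetilde\lambda_i:i\in I\}$ have rank $p+1$. Now $v$ is a vertex exactly when it is the unique solution of the linear system $\sum_i\widetilde\lambda_i x_i=(0,1)$, $x_j=0$ for $j\notin I$. That happens exactly when the $\widetilde\lambda_i$, $i\in I$, are linearly independent. Full rank $p+1$ then forces $|I|=p+1$. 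So exactly $n-p-1=d$ coordinates vanish at each vertex, and the corresponding constraint normals are independent modulo the affine span.

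\emph{Step 2: each active inequality gives a distinct facet.} Fix a vertex $v$ with zero set $J$, $|J|=d$. Locally $P_\Lambda$ is parametrized by $(x_j)_{j\in J}\ge0$ near $0$. Indeed, $\{\widetilde\lambda_i\}_{i\in I}$ is a basis, so the remaining coordinates are affine functions of $(x_j)_{j\in J}$, and they stay positive near $v$ because $v_i>0$. Hence near $v$ the polytope is affinely a neighbourhood of the origin in the orthant $\Rr^d_{\ge0}$. Each hyperplane $x_j=0$ with $j\in J$ therefore cuts out a $(d-1)$-dimensional face through $v$, and these $d$ faces are distinct. No other facet contains $v$, since any facet is of the form $F_{\{k\}}$ and $v_k>0$ for $k\notin J$. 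Thus exactly $d$ facets meet at $v$, and $P_\Lambda$ is simple. I also need $\dim P_\Lambda=d$. This follows because the Siegel condition together with Proposition~\ref{prop:faces} applied to $J=\emptyset$ (with full affine rank) gives a point with all $x_i>0$, which one first perturbs into the interior if necessary. I expect this perturbation step to be the main obstacle. The alternative is to argue directly from a vertex: the orthant chart already shows $\dim P_\Lambda=d$ locally, and nonemptiness of $P_\Lambda$ comes from the Siegel condition.

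\emph{Step 3: indispensable coordinates.} If $x_i>0$ on all of $P_\Lambda$, the inequality $x_i\ge0$ is never active, so $F_{\{i\}}=\emptyset$ and it contributes no facet. Conversely, if $x_i$ vanishes somewhere, it vanishes at some vertex, since a face of a polytope contains a vertex. Step 2 then shows that $F_{\{i\}}$ is a facet. Hence facets correspond bijectively to dispensable indices, as claimed. The only delicate point throughout is the rank statement in Step 1, and it is exactly the Carath\'eodory argument already established.
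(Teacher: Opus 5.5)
Your proof is correct and follows essentially the same route as the paper. You bound the vertex support by linear independence of the augmented columns $(\lambda_i,1)$, get $|I|=p+1$ from weak hyperbolicity, and use that these columns form a basis so that the $d$ complementary coordinates $x_j$, $j\notin I$, give a local orthant chart; the paper phrases this last step as the projection $T\to\Rr^{[n]\setminus I}$ being an isomorphism.

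Two small remarks. The perturbation step you worry about is unnecessary: the vertex chart already gives $\dim P_\Lambda=d$, and so does any point with all $x_i>0$ on the $d$-dimensional affine slice. Your converse in Step 3 is a small addition beyond what the paper proves.
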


\begin{proof}
Let $x$ be a vertex and let $I=\{i:x_i>0\}$ be its support.  If
$|I|>p+1$, the augmented columns $(\lambda_i,1)$, $i\in I$, are linearly
dependent.  A nonzero relation among them gives a nontrivial tangent
direction inside the coordinate face determined by the support of $x$,
contradicting that $x$ is a vertex.  Hence $|I|\le p+1$.  On the other hand, the defining equations give
$0\in\operatorname{conv}\{\lambda_i:i\in I\}$, so weak hyperbolicity gives
$|I|>p$.  Hence $|I|=p+1$.

The augmented columns $(\lambda_i,1)$, $i\in I$, are linearly independent:
otherwise the vertex would be degenerate, equivalently there would be a
nonzero tangent direction supported in $I$.

This independence is exactly what is needed to pass to the complementary
coordinates.  Let
\[
T=\{h\in\Rr^n:\textstyle\sum_i\lambda_ih_i=0,\ \sum_ih_i=0\}
\]
be the tangent space to the affine slice defining $P_\Lambda$; it does not
depend on the base point and has dimension $d=n-p-1$.  Independence of
$\{(\lambda_i,1):i\in I\}$ says precisely that $T\cap\Rr^I=0$, where
$\Rr^I\subset\Rr^n$ is the coordinate subspace supported on $I$.  Since
$\dim T=d=|[n]\setminus I|$, this forces the projection
$T\to\Rr^{[n]\setminus I}$, $h\mapsto(h_j)_{j\notin I}$, to be a linear
isomorphism, which is exactly the asserted independence of the restrictions
$x_j|_T$, $j\notin I$.  Thus the $d=n-p-1$
coordinate hyperplanes $x_j=0$, $j\notin I$, have independent restrictions
to the affine space defining $P_\Lambda$.  Each is therefore a facet near
$x$, and exactly $d$ facets meet at $x$.  Since $x$ was arbitrary,
$P_\Lambda$ is simple.  If $i$ is indispensable, then $x_i>0$ everywhere,
so $x_i=0$ is simply not among the facets.
\end{proof}

\section{The real Siegel action}

For later comparison with quaternionic vector fields, write the
infinitesimal generators of the real action explicitly.  For
$U\in\Rr^{4m}$ define
\[
X_U(q_1,\ldots,q_n)
=
\bigl(\langle\lambda_1,U\rangle q_1,\ldots,
\langle\lambda_n,U\rangle q_n\bigr).
\]

\begin{proposition}[Automatic integrability of the real orbit distribution]
\label{prop:real-commuting}
For all $U,V\in\Rr^{4m}$,
\[
[X_U,X_V]=0.
\]
Consequently the distribution tangent to the canonical real dilation
orbits is involutive on every constant-rank stratum.
\end{proposition}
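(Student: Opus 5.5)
The plan is to compute the bracket directly in real coordinates, then deduce involutivity from Frobenius-type reasoning on each stratum.

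\textbf{Step 1: the fields are diagonal linear.} Identify $\Hh^n\simeq\Rr^{4n}$. Then $X_U$ is the linear vector field $q\mapsto A_Uq$, where
\[
A_U=\operatorname{diag}\bigl(\langle\lambda_1,U\rangle I_4,\ldots,\langle\lambda_n,U\rangle I_4\bigr).
\]
The key point is that the coefficients $\langle\lambda_i,U\rangle$ are \emph{real}. Each field therefore multiplies every quaternionic coordinate by a real scalar, and no left or right quaternionic multiplication enters.

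\textbf{Step 2: compute the bracket.} For linear fields $X_A(q)=Aq$ and $X_B(q)=Bq$ we have $[X_A,X_B]=\pm X_{[A,B]}$, the sign depending on convention. The matrices $A_U$ and $A_V$ are both block-diagonal with scalar blocks, so they commute, and hence $[X_U,X_V]=0$. One can also see this through the flows. The flow of $X_U$ is $q\mapsto(e^{t\langle\lambda_i,U\rangle}q_i)_i$, and flows of this form commute because real exponentials multiply commutatively.

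\textbf{Step 3: involutivity.} Linearity of $U\mapsto X_U$ shows that the orbit distribution at $q$ is
\[
D_q=\{X_U(q):U\in\Rr^{4m}\},
\]
which is the image of a linear map. On a stratum where $\dim D_q$ is constant, $D$ is a smooth distribution, and it is locally spanned by the fields $X_{U_1},\ldots,X_{U_k}$ for suitable $U_j$, chosen near each point so that their values are independent. A general local section has the form $\sum f_jX_{U_j}$. Its bracket with another section expands by the Leibniz rule into terms $f_jg_l[X_{U_j},X_{U_l}]$, which vanish by Step 2, together with terms of the form $(X_{U_j}g)X_{U_l}$, which lie in $D$. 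Hence $D$ is involutive. The orbits of the abelian group $\Rr^{4m}$ are then its integral manifolds.

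\textbf{Main obstacle.} The only delicate point is the stratification. The rank of $D_q$ depends on the support $I(q)$ and on the span of $\{\lambda_i:i\in I(q)\}$. For this reason the statement is made only on constant-rank strata, which are invariant under the flows since the flows preserve supports. No genuine difficulty is expected there.
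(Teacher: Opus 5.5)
Your proposal is correct and follows the paper's route. The coefficients $\langle\lambda_i,U\rangle$ are real, so on each quaternionic coordinate both fields act by commuting scalar endomorphisms; the bracket therefore vanishes, and equivalently the fields generate the abelian $\Rr^{4m}$ action. Your Step 3, the local-frame and Leibniz-rule argument for involutivity on a constant-rank stratum, spells out what the paper leaves implicit.
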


\begin{proof}
The coefficients $\langle\lambda_i,U\rangle$ and
$\langle\lambda_i,V\rangle$ are real constants.  On the $i$th
quaternionic coordinate both vector fields are therefore scalar radial
fields, and their linear endomorphisms commute.  Hence their Lie bracket
vanishes coordinate by coordinate.  Equivalently, these vector fields
are the infinitesimal generators of the abelian action
$\Rr^{4m}\curvearrowright\Hh^n$ used below.
\end{proof}

Define
\[
\cS_\Lambda=
\left\{
q\in\Hh^n:
0\in\conv\{\lambda_i:q_i\neq0\}
\right\}.
\]
For $T\in\Rr^p$ put
\[
\Phi_T(q_1,\ldots,q_n)
=
\left(
e^{\langle\lambda_1,T\rangle}q_1,\ldots,
e^{\langle\lambda_n,T\rangle}q_n
\right).
\]
All exponential factors are positive real numbers; hence
\[
\Phi_{T+S}=\Phi_T\circ\Phi_S.
\]
This is a genuine abelian $\Rr^p$ action.

For fixed $q\in\cS_\Lambda$ define
\[
f_q(T)=\|\Phi_T(q)\|^2
=
\sum_i|q_i|^2e^{2\langle\lambda_i,T\rangle}.
\]

\begin{lemma}\label{lem:hessian}
One has
\[
\nabla f_q(T)=
2\sum_i|q_i|^2e^{2\langle\lambda_i,T\rangle}\lambda_i
\]
and
\[
D^2f_q(T)(X,X)
=
4\sum_i|q_i|^2e^{2\langle\lambda_i,T\rangle}
\langle\lambda_i,X\rangle^2.
\]
\end{lemma}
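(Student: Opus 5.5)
The plan is a direct term-by-term differentiation of
\[
f_q(T)=\sum_i|q_i|^2e^{2\langle\lambda_i,T\rangle},
\]
which is a finite sum of smooth functions of $T\in\Rr^p$. The coefficients $|q_i|^2$ are nonnegative real constants independent of $T$. The noncommutativity of $\Hh$ plays no role here, because $\Phi_T$ only rescales each coordinate by a positive real number. Indeed,
\[
|e^{\langle\lambda_i,T\rangle}q_i|^2=e^{2\langle\lambda_i,T\rangle}|q_i|^2,
\]
which confirms the displayed formula for $f_q$.

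First, I compute the directional derivative in a direction $X\in\Rr^p$. By the chain rule applied to the linear function $T\mapsto2\langle\lambda_i,T\rangle$,
\[
\frac{d}{dt}\Big|_{t=0}e^{2\langle\lambda_i,T+tX\rangle}=2\langle\lambda_i,X\rangle e^{2\langle\lambda_i,T\rangle}.
\]
Summing gives
\[
df_q(T)(X)=2\sum_i|q_i|^2e^{2\langle\lambda_i,T\rangle}\langle\lambda_i,X\rangle.
\]
Identifying the differential with the gradient through the Euclidean inner product on $\Rr^p$ yields the stated formula for $\nabla f_q(T)$.

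Second, I differentiate once more along $X$. Each summand picks up another factor $2\langle\lambda_i,X\rangle$, so
\[
D^2f_q(T)(X,X)=4\sum_i|q_i|^2e^{2\langle\lambda_i,T\rangle}\langle\lambda_i,X\rangle^2.
\]
By polarization, the full bilinear form is $D^2f_q(T)(X,Y)=4\sum_i|q_i|^2e^{2\langle\lambda_i,T\rangle}\langle\lambda_i,X\rangle\langle\lambda_i,Y\rangle$.

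There is no genuine obstacle here. The only points needing care are the normalization between the differential and the gradient, and the factor $2$ coming from the exponent $2\langle\lambda_i,T\rangle$, as opposed to the factor $\langle\lambda_i,T\rangle$ in $\Phi_T$. The membership $q\in\cS_\Lambda$ is not used in this computation. It becomes relevant only later, when positive definiteness of the Hessian and properness of $f_q$ are deduced from these formulas.
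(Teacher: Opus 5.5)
Your proposal is correct: term-by-term differentiation of $f_q(T)=\sum_i|q_i|^2e^{2\langle\lambda_i,T\rangle}$ gives exactly the stated gradient and Hessian, and this is the routine computation the paper leaves implicit, since it states the lemma without proof. Your remark that membership in $\cS_\Lambda$ is not needed here, and enters only later for strict convexity and properness in Lemma~\ref{lem:proper}, is also accurate.
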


\begin{lemma}\label{lem:interior}
If $\Lambda$ is admissible and $q\in\cS_\Lambda$, then
\[
0\in\operatorname{int}
\conv\{\lambda_i:q_i\neq0\}
\]
in $\Rr^p$.
\end{lemma}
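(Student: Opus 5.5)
We argue by contradiction, combining a supporting-hyperplane argument with Carathéodory's theorem in the spirit of the proof of Theorem~\ref{thm:smooth}. Write $I=\{i:q_i\neq0\}$ and $C=\conv\{\lambda_i:i\in I\}$. Since $q\in\cS_\Lambda$, we have $0\in C$, and we must show that $0$ is an interior point of $C$ in $\Rr^p$. Suppose instead that $0\in\partial C$ (the case where $C$ has empty interior counts here, with $C=\partial C$).

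First we produce a supporting hyperplane. Because $C$ is a compact convex set and $0$ is not an interior point, there is a nonzero linear functional $\xi\in\Rr^p$ with $\langle\lambda_i,\xi\rangle\ge0$ for all $i\in I$. If $C$ has empty interior we may simply take $\xi$ normal to a hyperplane containing its affine span, so that equality holds throughout. Put $I_0=\{i\in I:\langle\lambda_i,\xi\rangle=0\}$.

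Next we show that $0$ already lies in $\conv\{\lambda_i:i\in I_0\}$. Write $0=\sum_{i\in I}t_i\lambda_i$ with $t_i\ge0$ and $\sum t_i=1$. Pairing with $\xi$ gives $0=\sum_{i\in I}t_i\langle\lambda_i,\xi\rangle$. Every term is nonnegative, so $t_i=0$ whenever $\langle\lambda_i,\xi\rangle>0$. Hence $0\in\conv\{\lambda_i:i\in I_0\}$.

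Finally we apply Carathéodory's theorem. The points $\lambda_i$, $i\in I_0$, lie in the linear hyperplane $\xi^\perp$, which has dimension $p-1$. By Carathéodory's theorem in $\xi^\perp$, the origin lies in the convex hull of at most $p$ of them. This contradicts weak hyperbolicity, which requires any subconfiguration whose convex hull contains $0$ to have more than $p$ elements. Therefore $0\in\operatorname{int}C$.

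The only delicate step is the first one: making sure that a separating functional exists both when $C$ is full-dimensional with $0$ on its boundary and when $C$ is lower-dimensional. This is handled by the standard supporting hyperplane theorem for convex sets at non-interior points. Note that the argument does not need the full affine rank hypothesis on $\Lambda$.
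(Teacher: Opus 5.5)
Your proof is correct and follows the paper's route: a supporting hyperplane through the origin, Carath\'eodory in that $(p-1)$-dimensional hyperplane producing at most $p$ vectors whose convex hull contains $0$, and a contradiction with weak hyperbolicity. You also make explicit a step the paper leaves implicit: pairing the convex relation with $\xi$ forces the origin into the convex hull of the vectors lying on the supporting hyperplane.
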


\begin{proof}
If the origin lay on the boundary, a supporting face through the origin would
be contained in an affine hyperplane of dimension at most $p-1$.
Carath\'eodory's theorem in this hyperplane would express the origin as a
convex combination of at most $p$ vectors, contradicting weak hyperbolicity.
\end{proof}

\begin{lemma}\label{lem:proper}
For $q\in\cS_\Lambda$, the function $f_q$ is strictly convex and proper.
\end{lemma}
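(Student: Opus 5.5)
Write $I=\{i:q_i\neq0\}$. The plan is to read strict convexity off the Hessian formula of Lemma~\ref{lem:hessian}, and to derive properness from the interior condition of Lemma~\ref{lem:interior}.

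For strict convexity, Lemma~\ref{lem:hessian} gives
\[
D^2f_q(T)(X,X)=4\sum_{i\in I}|q_i|^2e^{2\langle\lambda_i,T\rangle}\langle\lambda_i,X\rangle^2\ge 0,
\]
and equality forces $\langle\lambda_i,X\rangle=0$ for every $i\in I$. By Lemma~\ref{lem:interior}, the origin lies in the interior of $\conv\{\lambda_i:i\in I\}$ in $\Rr^p$. This set therefore has nonempty interior, so the vectors $\lambda_i$, $i\in I$, span $\Rr^p$ linearly, and hence $X=0$. The Hessian is thus positive definite at every $T$, which gives strict convexity.

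For properness, I would prove that for every unit vector $X$ there is an index $i\in I$ with $\langle\lambda_i,X\rangle>0$. Since $0$ is interior to the convex hull, some small ball $B_\varepsilon(0)$ lies inside it. The point $\varepsilon X$ is then a convex combination of the $\lambda_i$, $i\in I$, and pairing with $X$ gives $\max_{i\in I}\langle\lambda_i,X\rangle\ge\varepsilon$. The function $g(X)=\max_{i\in I}\langle\lambda_i,X\rangle$ is therefore bounded below by $\varepsilon>0$ on the unit sphere; this holds directly from the argument above, and compactness gives the same conclusion in any case. Writing $c=\min_{i\in I}|q_i|^2>0$ and $T=tX$ with $|X|=1$, we get
\[
f_q(T)\ge c\,e^{2\varepsilon|T|}\to\infty,
\]
so $f_q$ is coercive. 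Since $f_q$ is continuous, its sublevel sets are closed and bounded, hence compact, and $f_q$ is proper.

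The main point is the uniform lower bound $\varepsilon$; the interior ball argument supplies it directly, so I expect no real obstacle. Strict convexity together with properness will later yield a unique minimum, characterized by $\nabla f_q=0$, that is, $\sum_i\lambda_i|\Phi_T(q)_i|^2=0$.
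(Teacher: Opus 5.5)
Your proof is correct and follows the paper's argument. Strict convexity comes from the Hessian formula because the support spans $\Rr^p$, and properness comes from Lemma~\ref{lem:interior} through a uniform positive lower bound on $\max_{i\in I}\langle\lambda_i,X\rangle$ over unit vectors $X$. The one difference is cosmetic: you get that bound explicitly from an interior ball rather than from compactness of the unit sphere, and you write out the estimate $f_q(T)\ge c\,e^{2\varepsilon|T|}$ that the paper leaves implicit. (Take the ball closed, or shrink $\varepsilon$, so that $\varepsilon X$ really lies in the hull.)
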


\begin{proof}
Strict convexity follows from Lemma~\ref{lem:hessian}, because the support
spans $\Rr^p$.  For properness, Lemma~\ref{lem:interior} implies that in
every direction $X$ on the unit sphere some scalar product
$\langle\lambda_i,X\rangle$ is positive.  By compactness of the unit sphere,
the maximum of these positive scalar products is uniformly bounded away from
zero.  Hence along every sequence $|T|\to\infty$ at least one exponential
term tends to infinity.
\end{proof}

\begin{theorem}[Global transversal]\label{thm:transversal}
Every $\Rr^p$ orbit in $\cS_\Lambda$ contains a unique point of minimum
Euclidean norm.  At the minimum,
\[
\sum_i\lambda_i|q_i|^2=0.
\]
The map
\[
Z_{\Hh}(\Lambda)\times\Rr^p\times\Rr_{>0}
\longrightarrow \cS_\Lambda,
\qquad
(q,T,r)\longmapsto r\,\Phi_T(q),
\]
is a diffeomorphism.
\end{theorem}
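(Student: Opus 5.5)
The plan is to combine Lemma~\ref{lem:proper} with an implicit-function argument. Fix $q\in\cS_\Lambda$. By Lemma~\ref{lem:proper}, $f_q$ is strictly convex and proper on $\Rr^p$, so it attains a minimum at a unique point $T_0(q)$. Since $\Phi_T$ multiplies coordinates by positive reals, it preserves supports, so the whole orbit lies in $\cS_\Lambda$. The relation $\Phi_{T+S}=\Phi_T\circ\Phi_S$ gives $f_{\Phi_S(q)}(T)=f_q(T+S)$. Hence the minimizer moves equivariantly, $T_0(\Phi_S q)=T_0(q)-S$, and the minimum-norm point $\Phi_{T_0(q)}(q)$ of the orbit is well defined and unique. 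Setting $\nabla f_q(T_0)=0$ in Lemma~\ref{lem:hessian} and writing $p'=\Phi_{T_0}(q)$ gives $\sum_i\lambda_i|p'_i|^2=0$. I will also show the converse: a point $p'$ satisfies this equation exactly when $T=0$ is critical for $f_{p'}$, hence, by convexity, exactly when $p'$ is the orbit's minimizer. Note that the equation $\sum\lambda_i|q_i|^2=0$ together with $\sum|q_i|^2=1$ forces $q\in\cS_\Lambda$, so $Z_{\Hh}(\Lambda)\subset\cS_\Lambda$.

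Next, define the map $\Psi(q,T,r)=r\Phi_T(q)$ and construct an explicit inverse. Positive scalars commute with $\Phi_T$ and scale $f_q$ by $r^2$, so they do not change $T_0$. For $x\in\cS_\Lambda$ put
\[
T(x)=-T_0(x),\qquad r(x)=\|\Phi_{T_0(x)}(x)\|,\qquad z(x)=\Phi_{T_0(x)}(x)/r(x).
\]
Then $z(x)\in Z_{\Hh}(\Lambda)$, and $\Psi(z,T,r)=x$. Conversely, starting from $(q,T,r)$ with $q\in Z_{\Hh}(\Lambda)$, the point $q$ is itself the minimizer of its orbit, by the converse above. Equivariance then recovers $T$, and normalization recovers $r$. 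So $\Psi$ is a bijection, and smoothness of $\Psi$ is clear. Note that $r>0$ automatically, because $0\notin\cS_\Lambda$: the empty support has empty convex hull.

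The main obstacle is smoothness of the inverse, that is, smooth dependence of $T_0(x)$ on $x$. I will show that $\cS_\Lambda$ is open in $\Hh^n$: the condition $0\in\conv\{\lambda_i:x_i\ne0\}$ is preserved under enlarging the support, and small perturbations only enlarge supports. Then I will apply the implicit function theorem to $G(x,T)=\nabla f_x(T)$, which is real-analytic in $(x,T)$. Its $T$-derivative is the Hessian from Lemma~\ref{lem:hessian}. This Hessian is positive definite because, by Lemma~\ref{lem:interior}, the vectors $\lambda_i$ with $x_i\neq0$ span $\Rr^p$. So $T_0$ is smooth, and even real-analytic, near each point. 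Uniqueness of the minimizer makes these local solutions agree with the global $T_0$. One subtlety is that the implicit solution must coincide with the global minimizer. This holds because any critical point of the strictly convex $f_x$ is the minimizer. With $T_0$ smooth, the formulas above make $\Psi^{-1}$ smooth, completing the proof that $\Psi$ is a diffeomorphism.
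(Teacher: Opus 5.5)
Your proposal is correct and follows the same route as the paper: strict convexity and properness of $f_q$ give a unique minimizer, the critical-point equation from Lemma~\ref{lem:hessian} is the first defining equation of $Z_{\Hh}(\Lambda)$, normalization gives the sphere equation, and the implicit function theorem with the positive definite Hessian gives smooth dependence. You also supply details the paper leaves implicit, and all of them are sound: the equivariance $T_0(\Phi_S q)=T_0(q)-S$, the converse that points of $Z_{\Hh}(\Lambda)$ are their own orbit minimizers (needed for injectivity), openness of $\cS_\Lambda$, and the identification of the local implicit solution with the global minimizer.
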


\begin{proof}
Existence and uniqueness of the minimum follow from strict convexity and
properness.  By Lemma~\ref{lem:hessian}, the critical point equation is the
first defining equation of $Z_{\Hh}(\Lambda)$.  Normalization gives the
sphere equation.  Smooth dependence on the initial point follows from the
implicit function theorem, since the Hessian at the critical point is
positive definite.
\end{proof}

\section{Projectivization and LVMQ manifolds}

The diagonal left action
\[
a\cdot(q_1,\ldots,q_n)
=
(aq_1,\ldots,aq_n),
\qquad a\in\Sp(1),
\]
preserves $Z_{\Hh}(\Lambda)$ and is free.  Define
\[
N_{\Hh}(\Lambda)=Z_{\Hh}(\Lambda)/\Sp(1).
\]
Then
\[
\dim_{\Rr}N_{\Hh}(\Lambda)=4n-4m-4.
\]

We use the left-projective convention
\[
\Hh P^{n-1}
=
(\Hh^n\setminus\{0\})/\Hh^\ast
\]
for left scalar multiplication.  This is diffeomorphic to the usual
right-projective model.  Write
\[
\Hh^\ast=\Rr_{>0}\times\Sp(1).
\]
Then
\[
V_\Lambda=\cS_\Lambda/\Hh^\ast
\subset\Hh P^{n-1}.
\]

\begin{theorem}\label{thm:projective}
The $\Rr^{4m}$ action descends to $V_\Lambda$ and
\[
V_\Lambda\cong N_{\Hh}(\Lambda)\times\Rr^{4m}.
\]
In particular
\[
V_\Lambda/\Rr^{4m}\cong N_{\Hh}(\Lambda).
\]
\end{theorem}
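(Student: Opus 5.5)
The plan is to deduce everything from the global transversal theorem (Theorem~\ref{thm:transversal}) by checking that all the relevant group actions commute and that the diffeomorphism
\[
\Psi:Z_{\Hh}(\Lambda)\times\Rr^{4m}\times\Rr_{>0}\longrightarrow\cS_\Lambda,
\qquad \Psi(q,T,r)=r\,\Phi_T(q),
\]
is equivariant for suitable actions on the source.

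First I will verify that $\cS_\Lambda$ is invariant under left multiplication by $\Hh^\ast$ and under $\Phi_T$. Both operations preserve the support $\{i:q_i\neq0\}$, and membership in $\cS_\Lambda$ depends only on the support. Next, $\Phi_T$ commutes with left multiplication by every $a\in\Hh^\ast$, because $e^{\langle\lambda_i,T\rangle}$ is a real scalar and hence central in $\Hh$. Therefore the $\Rr^{4m}$ action descends to $V_\Lambda=\cS_\Lambda/\Hh^\ast$, which is the first assertion.

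Then I will transport the $\Hh^\ast=\Rr_{>0}\times\Sp(1)$ action through $\Psi$. For $a=s\,u$ with $s>0$ and $u\in\Sp(1)$, centrality of real scalars gives
\[
a\cdot r\Phi_T(q)=(sr)\,\Phi_T(u q).
\]
Moreover $uq\in Z_{\Hh}(\Lambda)$, since left multiplication by $u$ preserves every $|q_i|$. So under $\Psi$ the $\Hh^\ast$ action becomes the product action $(q,T,r)\mapsto(uq,T,sr)$. Likewise the $\Rr^{4m}$ action becomes translation in the $T$ factor: $\Phi_S\Psi(q,T,r)=\Psi(q,T+S,r)$.

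Passing to quotients, $\Psi$ induces a bijection
\[
\bigl(Z_{\Hh}(\Lambda)/\Sp(1)\bigr)\times\Rr^{4m}\;\longrightarrow\;V_\Lambda .
\]
The left side is $N_{\Hh}(\Lambda)\times\Rr^{4m}$. The map is $\Rr^{4m}$-equivariant with translation on the second factor, and quotienting by $\Rr^{4m}$ gives $V_\Lambda/\Rr^{4m}\cong N_{\Hh}(\Lambda)$.

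The main point needing care is that these bijections are diffeomorphisms rather than just bijections. Smoothness of the quotients is the key issue. The diagonal $\Sp(1)$ action on $Z_{\Hh}(\Lambda)$ is free, and $Z_{\Hh}(\Lambda)$ is compact, so $N_{\Hh}(\Lambda)$ is a manifold with $Z_{\Hh}(\Lambda)\to N_{\Hh}(\Lambda)$ a principal bundle. The set $\cS_\Lambda$ is open in $\Hh^n\setminus\{0\}$: it is a union of the open sets $\{q_i\neq0,\ i\in I\}$ over the admissible supports $I$. Hence $V_\Lambda$ is an open subset of $\Hh P^{n-1}$ with its quotient smooth structure. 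An $\Hh^\ast$-equivariant diffeomorphism between free proper actions induces a diffeomorphism of quotients, because $\Psi$ conjugates the two $\Hh^\ast$ actions and quotient manifold structures are characterized by submersions. Properness of the action on the source is clear, as it is a product of a compact group action with the free action of $\Rr_{>0}$ on $\Rr_{>0}$. Finally, the $\Rr^{4m}$ translation action on $N_{\Hh}(\Lambda)\times\Rr^{4m}$ is free and proper with quotient $N_{\Hh}(\Lambda)$, which completes the identification $V_\Lambda/\Rr^{4m}\cong N_{\Hh}(\Lambda)$.
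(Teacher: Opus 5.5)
Your proposal is correct and follows the same route as the paper: commute the real dilations with positive homotheties and with the diagonal left $\Sp(1)$ action, apply the global transversal theorem (Theorem~\ref{thm:transversal}), and divide by $\Hh^\ast=\Rr_{>0}\times\Sp(1)$. Beyond that, you make explicit what the paper leaves implicit: the $\Hh^\ast$- and $\Rr^{4m}$-equivariance of $\Psi$, the openness of $\cS_\Lambda$ (so that $V_\Lambda$ is open in $\Hh P^{n-1}$), and the fact that an equivariant diffeomorphism between free proper actions descends to a diffeomorphism of the quotient manifolds.
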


\begin{proof}
The action by real positive coordinate dilations commutes with positive
homotheties and with diagonal left multiplication by $\Sp(1)$.
Apply Theorem~\ref{thm:transversal} and divide by these two actions.
\end{proof}

Thus $N_{\Hh}(\Lambda)$ is obtained as a Hausdorff leaf space of an
ordinary real analytic foliation of an open subset of quaternionic projective
space.  This is the sense in which the construction is an LVM construction.

\section{The coordinatewise $\Sp(1)^n$ action on $N_{\Hh}(\Lambda)$}

The coordinatewise right $\Sp(1)^n$ action commutes with the diagonal left
$\Sp(1)$ and therefore descends to $N_{\Hh}(\Lambda)$.

\begin{proposition}\label{prop:isotropy}
Let $[q]\in N_{\Hh}(\Lambda)$ and
\[
J(q)=\{i:q_i=0\}.
\]
Then
\[
\operatorname{Stab}_{\Sp(1)^n}([q])
\cong
\Sp(1)\times\Sp(1)^{|J(q)|}.
\]
In particular the principal isotropy group is $\Sp(1)$.
\end{proposition}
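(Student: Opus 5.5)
We compute the stabilizer directly from the definitions.

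\textbf{Step 1: reduce to an equation.} An element $u=(u_1,\ldots,u_n)\in\Sp(1)^n$ fixes $[q]$ exactly when $q\cdot u=a\,q$ for some $a\in\Sp(1)$. Coordinatewise this says
\[
q_iu_i=aq_i\qquad\text{for all } i.
\]
The element $a$ is uniquely determined by $u$. Indeed, if $aq=a'q$, then $a=a'$ because the diagonal action is free and $q\neq0$.

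\textbf{Step 2: solve the equation for fixed $a$.} For $i\in J(q)$ the equation is vacuous, so $u_i$ is arbitrary. For $i\notin J(q)$ the coordinate $q_i$ is nonzero, so the equation forces $u_i=q_i^{-1}aq_i$. This is a unit quaternion, since conjugation preserves the norm.

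\textbf{Step 3: build the isomorphism.} Define
\[
\psi:\Sp(1)\times\Sp(1)^{J(q)}\to\operatorname{Stab}([q])
\]
by setting $u_i=q_i^{-1}aq_i$ for $i\notin J(q)$ and $u_j=b_j$ for $j\in J(q)$, where $(a,(b_j))$ is the input.

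\begin{itemize}
\item \emph{Homomorphism.} We have $q_i^{-1}aa'q_i=(q_i^{-1}aq_i)(q_i^{-1}a'q_i)$, and the $J(q)$-components simply multiply. So $\psi$ respects products.
\item \emph{Surjectivity.} This is Step 2.
\item \emph{Injectivity.} By Corollary~\ref{cor:support} the complement of $J(q)$ is nonempty; in fact it has at least $4m+1\ge5$ elements. If $\psi(a,b)$ is trivial, then some $i\notin J(q)$ gives $q_i^{-1}aq_i=1$, hence $a=1$, and then $b=1$.
\end{itemize}

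Since $\psi$ is a continuous bijective homomorphism of compact Lie groups, it is an isomorphism. Note that the factor $\Sp(1)$ embeds as the conjugated diagonal subgroup $\{(q_i^{-1}aq_i)_{i\notin J(q)}\}$.

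\textbf{Step 4: principal isotropy.} A point of $Z_{\Hh}(\Lambda)$ lying over the interior of $P_\Lambda$ has $J(q)=\emptyset$, and such points form an open dense set. For these points the stabilizer is $\Sp(1)$, which gives the principal isotropy.

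The only potential subtlety is the nonemptiness of the support, which is needed for injectivity. This is handled by Corollary~\ref{cor:support}.
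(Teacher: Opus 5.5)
Your proof is correct and follows the paper's route. Both reduce the fixed-point condition to $q_iu_i=aq_i$ for some $a\in\Sp(1)$, then solve $u_i=q_i^{-1}aq_i$ on the support and leave $u_i$ free on $J(q)$. You additionally check what the paper leaves implicit: that $a$ is unique, that $(a,(b_j))\mapsto u$ is a homomorphism, and injectivity via a nonzero coordinate. This makes the asserted group isomorphism fully rigorous.
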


\begin{proof}
The class $[q]$ is fixed by $u=(u_1,\ldots,u_n)$ precisely when there is
an $a\in\Sp(1)$ such that
\[
q_i u_i=a q_i
\]
for all $i$.  If $q_i\neq0$, then
\[
u_i=q_i^{-1}a q_i.
\]
If $q_i=0$, the element $u_i$ is arbitrary.
\end{proof}

If a point of $P_\Lambda$ lies in the relative interior of a codimension
$k$ face, and there are no indispensable coordinates, exactly $k$
quaternionic coordinates vanish.  The isotropy therefore increases by
$k$ factors of $\Sp(1)$.

The full symmetry is recorded by the action groupoid
\[
\Sp(1)^n\ltimes N_{\Hh}(\Lambda).
\]
Its objects are the points of $N_{\Hh}(\Lambda)$ and an arrow
$[q]\to[q]\cdot u$ is specified by $u\in\Sp(1)^n$.  Its orbit space is
$P_\Lambda$.

\section{Relation with quaternionic moment-angle manifolds}

Let $P=P_\Lambda$.  It is convenient to index the facet complex by all
coordinates, including indispensable ones.  Thus let $K_P$ be the
simplicial complex on $[n]$ defined by
\[
J\in K_P
\quad\Longleftrightarrow\quad
P\cap\{x_j=0:j\in J\}\neq\varnothing.
\]
If $i$ is indispensable, then $\{i\}$ is not a simplex; in the usual
polyhedral-product terminology $i$ is a ghost vertex.  When there are no
indispensable coordinates, $K_P$ is simply the nerve of the facets of the
simple polytope $P$.  For a simplicial complex $K$ on $[n]$ and a pair $(X,A)$, the
polyhedral product is
\[
(X,A)^K
=
\bigcup_{\sigma\in K}
\prod_{i=1}^n Y_i(\sigma),
\qquad
Y_i(\sigma)=
\begin{cases}
X,&i\in\sigma,\\
A,&i\notin\sigma.
\end{cases}
\]
The quaternionic moment-angle polyhedral product associated with $P$ is
\[
\cZ_P^{\Hh}=(D^4,\Ss^{3})^{K_P}.
\]
This is a special case of the general polyhedral-product formalism; see
\cite{BP,BBCG,BBCG2}.  We record its relation with the
present intersections because it is the main tool for studying their
topology.

\begin{theorem}\label{thm:polyprod}
There is an $\Sp(1)^n$-equivariant homeomorphism
\[
Z_{\Hh}(\Lambda)\cong(D^4,\Ss^{3})^{K_P},
\]
where an indispensable coordinate is represented by a ghost vertex of
$K_P$ and hence contributes an $\Ss^{3}$ factor.
\end{theorem}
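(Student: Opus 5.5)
The plan is to follow the standard Buchstaber--Panov argument, in which a moment-angle manifold is recovered as an identification space over the polytope, carried out here with $\Sp(1)$ in place of the circle. By Theorem~\ref{thm:polytope}, $Z_{\Hh}(\Lambda)$ is the identification space
\[
Z_{\Hh}(\Lambda)\cong \bigl(P\times\Sp(1)^n\bigr)/\!\sim,
\qquad
(x,u)\sim(x,u')\iff u_i=u_i' \text{ whenever } x_i\neq0 .
\]
The map sends $(x,u)$ to $q=(\sqrt{x_1}\,u_1,\ldots,\sqrt{x_n}\,u_n)$. It is a continuous surjection from a compact space onto a Hausdorff space, and it is injective on equivalence classes, since $q$ determines $x$ and determines $u_i$ exactly when $x_i>0$. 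Hence it is a homeomorphism, and it is $\Sp(1)^n$-equivariant for right multiplication on the $u$ factor.

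The second step is to produce an embedding $c\colon P\hookrightarrow I^n=[0,1]^n$ onto the cubical subcomplex
\[
\mathrm{cc}(K_P)=\bigcup_{\sigma\in K_P}\{y\in I^n: y_j=1 \text{ for } j\notin\sigma\}.
\]
This embedding must send each face $F_J$ into $\{y_j=0,\ j\in J\}$, and must send every point with $x_i>0$ for $i\notin\sigma$ into the cube $C_\sigma=\{y_j=1,\ j\notin\sigma\}$. I would use the usual barycentric construction. Take the barycentric subdivision of $P$. For a face $F$ with barycenter $b_F$, let $c(b_F)$ be the vertex of $I^n$ with $y_j=0$ exactly for the facets $x_j=0$ containing $F$. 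Then extend $c$ linearly on each simplex of the subdivision.

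Simplicity (Proposition~\ref{prop:simple}) is what makes this work. Near a vertex $v$, exactly $d$ facets meet, so the star of $v$ maps homeomorphically onto the $d$-cube $C_{\sigma(v)}$, where $\sigma(v)$ is the set of facets through $v$. Indeed, the cone over the subdivision of the link of $v$ is combinatorially a cube. These cubes glue along common faces, so $c$ is a homeomorphism $P\to\mathrm{cc}(K_P)$. An indispensable coordinate $i$ never vanishes on $P$, and every image point then has $y_i=1$. This is exactly the ghost-vertex convention: $\{i\}\notin K_P$, so the $i$-th factor in every product is $A=\Ss^3$.

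The third step identifies $(D^4,\Ss^3)^{K_P}$ with $\bigl(\mathrm{cc}(K_P)\times\Sp(1)^n\bigr)/\!\sim$, where $(y,u)\sim(y,u')$ if and only if $u_i=u_i'$ whenever $y_i\neq0$. This uses the polar map $I\times\Ss^3\to D^4$, $(t,u)\mapsto tu$. That map collapses $\{0\}\times\Ss^3$ to the origin and sends $\{1\}\times\Ss^3$ onto $\Ss^3$. Under it, the cube $C_\sigma$ maps onto $\prod_{i\in\sigma}D^4\times\prod_{i\notin\sigma}\Ss^3$. The gluing relation is the same as in the first step, because $c$ preserves which coordinates vanish: $c(x)_i=0$ if and only if $x_i=0$. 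Composing the three identifications gives the equivariant homeomorphism.

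The main obstacle is the second step. One has to check carefully that $c$ is well defined and bijective onto $\mathrm{cc}(K_P)$, and that the vanishing loci match. Simplicity and the face description of Proposition~\ref{prop:faces} carry that check. Everything else is formal.
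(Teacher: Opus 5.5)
Your proposal is correct and follows the same route as the paper. It uses the identification-space model $(\Sp(1)^n\times P)/\!\sim$ from Theorem~\ref{thm:polytope}, the Buchstaber--Panov cubical embedding of the simple polytope onto $\mathrm{cc}(K_P)$, and polar coordinates $(t,u)\mapsto tu$, and it supplies details that the paper delegates to the standard references. The one slip is the second ``must'' condition in your second step, which cannot hold as stated: for $\sigma=\varnothing$ it would collapse the relative interior of $P$ to the point $(1,\ldots,1)$. What your barycentric construction actually gives, and all you use later, is $c(P)=\mathrm{cc}(K_P)$ together with $c(x)_i=0\iff x_i=0$. The latter rests on a fact from the proof of Proposition~\ref{prop:simple}: every nonempty $P\cap\{x_j=0\}$ is a facet, and distinct $j$ give distinct facets.
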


\begin{proof}
By Theorem~\ref{thm:polytope}, the norm-square map is the orbit map and
there is a canonical model
\[
Z_{\Hh}(\Lambda)\cong(\Sp(1)^n\times P)/\!\sim,
\]
where $(u,x)\sim(u',x)$ precisely when $u_i=u_i'$ for every coordinate
with $x_i>0$; equivalently the $i$th $\Sp(1)$ factor is collapsed on the
facet $x_i=0$.  Since $P$ is simple by Proposition~\ref{prop:simple}, its
facet nerve is $K_P$.  The standard polyhedral-product model for a simple
polytope identifies
\[
(\Sp(1)^n\times P)/\!\sim
\quad\text{with}\quad
(D^4,\Ss^{3})^{K_P};
\]
see, for example, \cite{BP,BBCG}.  Concretely, polar coordinates
$z_i=\rho_i u_i$ identify the radial variables with a cubical
neighborhood of the face lattice of $P$, while the phase $u_i$ becomes
irrelevant exactly when $\rho_i=0$, which is exactly the above
 equivalence relation.  These local identifications agree on face
intersections and are $\Sp(1)^n$-equivariant.

If coordinate $i$ is indispensable, the facet $x_i=0$ is absent.  In the
simplicial description this is a ghost vertex, so the $i$th coordinate of
the polyhedral product is constrained to $\Ss^{3}$ everywhere and splits off
as an $\Ss^{3}$ factor.
\end{proof}

\begin{corollary}\label{cor:2connZ}
The manifold $Z_{\Hh}(\Lambda)$ is $2$-connected.
\end{corollary}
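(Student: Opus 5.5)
We derive the corollary from Theorem~\ref{thm:polyprod}, which identifies $Z_{\Hh}(\Lambda)$ with the polyhedral product $(D^4,\Ss^{3})^{K_P}$, and then show that the polyhedral product is $2$-connected. There are two routes, and we plan to use the first, cellular one.

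\emph{Cellular route.} Give $D^4$ the CW structure with one $0$-cell, one $3$-cell (so that the $3$-skeleton is $\Ss^3$) and one $4$-cell. Then $(D^4,\Ss^3)^{K_P}$ is a CW subcomplex of the product complex $(D^4)^n$. Each product cell is a product of cells from the factors; its dimension is a sum of $0$'s, $3$'s and $4$'s, so it is either $0$ or at least $3$. The $0$-cell is the basepoint, and it lies in $(\Ss^3)^n\subset(D^4,\Ss^3)^{K_P}$. Hence the $2$-skeleton of $(D^4,\Ss^3)^{K_P}$ is a single point. Cellular approximation then gives $\pi_1=\pi_2=0$, and connectedness is immediate. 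Ghost vertices (indispensable coordinates) only contribute $\Ss^3$ factors, which have cells of dimension $0$ and $3$ only, so they do not affect the argument.

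\emph{Homotopy-theoretic route.} Alternatively, the Buchstaber--Panov fibration
\[
(D^4,\Ss^3)^{K}\to (\Hh P^\infty)^{K}\to (\Hh P^\infty)^n
\]
exhibits the polyhedral product as the homotopy fibre of the inclusion of the Davis--Januszkiewicz-type space $(\Hh P^\infty,\ast)^K$ into $(\Hh P^\infty)^n$. Both spaces are $3$-connected, since their cells lie in dimensions $0$ and $\ge 4$. The long exact sequence then gives that the fibre is $2$-connected. We only mention this route; the cellular argument is more elementary and sufficient.

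The one point needing care is that Theorem~\ref{thm:polyprod} gives only a homeomorphism, not a cellular identification. Since homotopy groups are homeomorphism invariants, this suffices. The remaining check is that $(D^4,\Ss^3)^{K_P}$ is a genuine subcomplex of $(D^4)^n$. This holds because for each simplex $\sigma$ the set $\prod_i Y_i(\sigma)$ is a product of subcomplexes, and a finite union of subcomplexes is again a subcomplex.

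I expect no real obstacle. The only subtle step is confirming that the smooth manifold $Z_{\Hh}(\Lambda)$ and the CW model agree up to homotopy, which is already supplied by the $\Sp(1)^n$-equivariant homeomorphism of Theorem~\ref{thm:polyprod}.
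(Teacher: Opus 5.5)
Your cellular argument is correct and is essentially the paper's own proof: the paper uses the same CW structure on $(D^4,\Ss^3)$ and observes that $(D^4,\Ss^3)^{K_P}$ has a single $0$-cell and no cells of dimension $1$ or $2$; it then concludes $\pi_1=\pi_2=0$ by cellular approximation, with ghost vertices contributing only $\Ss^3$ factors exactly as in your treatment. Your explicit checks, that the polyhedral product is a subcomplex of $(D^4)^n$ and that the homeomorphism of Theorem~\ref{thm:polyprod} suffices to transfer homotopy groups, simply spell out steps the paper leaves implicit.
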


\begin{proof}
Use the CW structures with one $0$-cell and one $3$-cell on $\Ss^{3}$, and
with $\Ss^{3}$ as the boundary subcomplex of the $4$-cell $D^4$.  The induced
CW structure on $(D^4,\Ss^{3})^{K_P}$ has one $0$-cell and no cells of
dimension $1$ or $2$: every positive-dimensional cell has dimension at
least $3$.  By cellular approximation every map $\Ss^j\to Z_{\Hh}(\Lambda)$,
$j=1,2$, is homotopic into the $j$-skeleton, which is a point.  Thus
$\pi_1=\pi_2=0$.  Extra indispensable coordinates only add $\Ss^{3}$ factors
and do not change this conclusion.
\end{proof}

The generalized Hochster decomposition gives the additive (co)homology.  For
$J\subset[n]$, let $(K_P)_J$ be the full subcomplex on $J$, with ghost
vertices retained when indispensable coordinates are present.

\begin{theorem}[Generalized Hochster decomposition]\label{thm:hochster}
There are natural additive decompositions
\[
\widetilde H^q(Z_{\Hh}(\Lambda);\Zz)
\cong
\bigoplus_{J\subset[n]}
\widetilde H^{\,q-3|J|-1}((K_P)_J;\Zz)
\]
and
\[
\widetilde H_q(Z_{\Hh}(\Lambda);\Zz)
\cong
\bigoplus_{J\subset[n]}
\widetilde H_{\,q-3|J|-1}((K_P)_J;\Zz).
\]
\end{theorem}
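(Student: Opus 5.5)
The plan is to reduce to the known stable splitting of polyhedral products due to Bahri--Bendersky--Cohen--Gitler, using Theorem~\ref{thm:polyprod} to identify $Z_{\Hh}(\Lambda)$ with $(D^4,\Ss^{3})^{K_P}$. The pair $(D^4,\Ss^3)$ is a pair of CW complexes with $D^4$ contractible, so we are in the ``wedge lemma'' situation. For such a pair $(C A, A)$ with $A=\Ss^3$, the BBCG theorem gives a homotopy equivalence
\[
\Sigma (D^4,\Ss^3)^{K}\simeq \Sigma\bigvee_{J\subset[n]} |K_J|\ast \widehat{A}^{J},
\qquad \widehat{A}^J=\Ss^3\wedge\cdots\wedge\Ss^3\cong \Ss^{3|J|}.
\]
The join with a sphere is an iterated suspension: $|K_J|\ast \Ss^{3|J|}\simeq \Sigma^{3|J|+1}|K_J|$. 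Taking reduced (co)homology and using the suspension isomorphism, the $J$-summand contributes $\widetilde H^{q-3|J|-1}((K_P)_J)$ to $\widetilde H^q$. This is the stated formula, and the homological version is identical. Naturality (with respect to simplicial maps and the coordinate $\Sp(1)^n$-equivariance) is inherited from the naturality of the BBCG splitting.

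The key steps, in order, are as follows.
\begin{enumerate}
\item Invoke Theorem~\ref{thm:polyprod}.
\item Check the conventions for ghost vertices. If $i$ is indispensable, then $\{i\}\notin K_P$. For $J$ containing ghost vertices, $(K_P)_J$ is the full subcomplex on the non-ghost part, viewed as a complex on $J$. The empty complex must be assigned $\widetilde H^{-1}(\varnothing)=\Zz$ and $\widetilde H^*=0$ otherwise.
\item Apply the wedge decomposition.
\item Convert joins into suspensions and read off degrees.
\end{enumerate}

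As a sanity check I would verify the convention on $J=\{i\}$ a single ghost vertex. Then $(K_P)_J=\varnothing$, so the formula gives a $\Zz$ in degree $3\cdot1+1-1=3$. This matches the $\Ss^3$ factor split off in Theorem~\ref{thm:polyprod}. For a full simplex $J$, the complex $(K_P)_J$ is contractible and contributes nothing, consistent with $D^4$ factors.

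The main obstacle is the ghost-vertex bookkeeping rather than the splitting itself. The BBCG theorem is usually stated for complexes whose vertex set is $[n]$. I would handle ghosts directly: if $G$ is the set of indispensable indices, then $(D^4,\Ss^3)^{K_P}=(\Ss^3)^{G}\times(D^4,\Ss^3)^{K'}$ with $K'$ the honest complex on $[n]\setminus G$. I would then apply the splitting to $K'$ and use the Künneth formula, which is torsion-free since $H^*(\Ss^3)$ is free. The product $\widetilde H$ of the $\Ss^3$ factors then recombines with the $K'$-summands into exactly the summands indexed by $J=J'\sqcup J_G$. The reason is that $(K_P)_{J}=(K')_{J'}\ast\varnothing$ with the degree shift absorbed by $3|J_G|$. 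Alternatively, one can observe that BBCG's argument never uses that every vertex is a simplex, and apply it verbatim. Either route should close the proof.
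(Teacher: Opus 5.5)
Your proposal follows the same route as the paper, which simply cites the BBCG stable splitting for $(D^4,\Ss^3)^{K_P}$ and reads off the degree shift $3|J|+1$; your explicit ghost-vertex bookkeeping via $(\Ss^3)^G\times(D^4,\Ss^3)^{K'}$ and K\"unneth fills in a step the paper leaves implicit. One small caveat: with your convention $\widetilde H^{-1}(\varnothing)=\Zz$, the summand $J=\varnothing$ would put a spurious $\Zz$ into $\widetilde H^0(Z_{\Hh}(\Lambda))$, which vanishes because $Z_{\Hh}(\Lambda)$ is connected. That term therefore has to be excluded, which is natural since the stable splitting only produces summands for nonempty $J$, or else the left-hand side should be taken unreduced.
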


\begin{proof}
This is the $(D^4,\Ss^{3})$ specialization of the standard stable splitting
and cellular decomposition for polyhedral products; see
\cite{BBCG,BP,LMM}.  In the stable splitting, a subset $J$ contributes a
suspension shift of $3|J|+1$, which gives the displayed degree
$q-3|J|-1$ after taking reduced (co)homology.  The construction is
integral and cellular, so it yields the homology and cohomology
statements simultaneously.  We will use only the low-degree
consequences below.
\end{proof}

\begin{proposition}\label{prop:stablepar}
The manifold $Z_{\Hh}(\Lambda)$ is stably parallelizable:
\[
TZ_{\Hh}(\Lambda)\oplus\Rr^{4m+1}
\cong\Rr^{4n}.
\]
\end{proposition}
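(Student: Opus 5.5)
The plan is to use the fact that $Z_{\Hh}(\Lambda)$ is a regular level set with trivial normal bundle inside $\Hh^n\simeq\Rr^{4n}$.

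First, by Theorem~\ref{thm:smooth}, $Z_{\Hh}(\Lambda)=F^{-1}(0,1)$, where
\[
F:\Hh^n\to\Rr^{p}\times\Rr
\]
is smooth and $(0,1)$ is a regular value. At every $q\in Z_{\Hh}(\Lambda)$ the differential $dF_q$ is therefore surjective. Its kernel is $T_qZ_{\Hh}(\Lambda)$, and $dF_q$ induces an isomorphism of the normal space $T_q\Hh^n/T_qZ_{\Hh}(\Lambda)$ with $\Rr^{p+1}=\Rr^{4m+1}$.

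Next, this isomorphism depends smoothly on $q$ and its target is a fixed vector space. Hence the normal bundle $\nu$ of $Z_{\Hh}(\Lambda)\subset\Rr^{4n}$ is trivialized as $\nu\cong Z_{\Hh}(\Lambda)\times\Rr^{4m+1}$. Concretely, the gradients of the $4m+1$ component functions of $F$ form a global frame of $\nu$. They are pointwise linearly independent precisely because $dF_q$ has full rank. Gram–Schmidt turns this into an orthonormal frame if desired.

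Finally, I will use the splitting
\[
T\Rr^{4n}|_{Z_{\Hh}(\Lambda)}=TZ_{\Hh}(\Lambda)\oplus\nu .
\]
It gives
\[
TZ_{\Hh}(\Lambda)\oplus\Rr^{4m+1}\cong\Rr^{4n}
\]
as bundles over $Z_{\Hh}(\Lambda)$, which is the claim.

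There is essentially no obstacle. The only point needing care is to notice that the regularity supplied by Theorem~\ref{thm:smooth} holds at every point of $Z_{\Hh}(\Lambda)$, including points with vanishing coordinates. That regularity rests on weak hyperbolicity through the Carathéodory argument, and it is what makes the gradient frame global. The polyhedral-product description is not needed here.
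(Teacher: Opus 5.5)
Your proposal is correct and follows the paper's own argument. The $4m+1$ gradients of the components of $F$ are pointwise independent by the regularity from Theorem~\ref{thm:smooth}, so they frame the normal bundle of $Z_{\Hh}(\Lambda)$ in $\Rr^{4n}$, and splitting the trivial bundle $T\Rr^{4n}|_{Z_{\Hh}(\Lambda)}$ gives $TZ_{\Hh}(\Lambda)\oplus\Rr^{4m+1}\cong\Rr^{4n}$.
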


\begin{proof}
The $4m+1$ gradients of the defining equations give a global framing of the
normal bundle of $Z_{\Hh}(\Lambda)$ in $\Hh^n\simeq\Rr^{4n}$.
\end{proof}

\section{Topology of the LVMQ quotient}

There is a principal bundle
\[
\Ss^{3}\longrightarrow Z_{\Hh}(\Lambda)
\stackrel{\pi}{\longrightarrow}N_{\Hh}(\Lambda).
\]

\begin{theorem}\label{thm:2connN}
Every LVMQ manifold is $2$-connected.
\end{theorem}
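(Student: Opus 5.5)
The plan is to apply the long exact homotopy sequence of the principal bundle
\[
\Ss^{3}\longrightarrow Z_{\Hh}(\Lambda)\stackrel{\pi}{\longrightarrow}N_{\Hh}(\Lambda),
\]
and feed in Corollary~\ref{cor:2connZ}. The diagonal left $\Sp(1)$ action on $Z_{\Hh}(\Lambda)$ is free, and $Z_{\Hh}(\Lambda)$ is compact by Theorem~\ref{thm:smooth}. Since $\Sp(1)$ is a compact Lie group acting freely and smoothly, the quotient map $\pi$ is a smooth principal $\Sp(1)$-bundle, hence a Serre fibration. So we have the exact sequence
\[
\cdots\to\pi_j(\Ss^{3})\to\pi_j(Z_{\Hh}(\Lambda))\to\pi_j(N_{\Hh}(\Lambda))\to\pi_{j-1}(\Ss^{3})\to\cdots
\]
ending in $\pi_0(Z_{\Hh}(\Lambda))\to\pi_0(N_{\Hh}(\Lambda))\to 0$.

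First, connectedness. The space $Z_{\Hh}(\Lambda)$ is connected by Corollary~\ref{cor:2connZ}. Since $\pi$ is surjective, $N_{\Hh}(\Lambda)$ is connected as well.

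Next, the groups $\pi_1$ and $\pi_2$. For $j=1,2$ the relevant portion of the sequence is
\[
\pi_j(Z_{\Hh}(\Lambda))\to\pi_j(N_{\Hh}(\Lambda))\to\pi_{j-1}(\Ss^{3}).
\]
The left term vanishes by Corollary~\ref{cor:2connZ}. The right term vanishes because $\Ss^{3}$ is $2$-connected, so $\pi_0(\Ss^{3})$ and $\pi_1(\Ss^{3})$ are trivial. Exactness then forces $\pi_1(N_{\Hh}(\Lambda))=\pi_2(N_{\Hh}(\Lambda))=0$.

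The only point needing care is low-degree exactness, since at $j=1$ we are dealing with pointed sets and groups. Here the fibre is connected and the total space is simply connected, so the standard argument applies directly. This holds whether or not there are indispensable coordinates, because Corollary~\ref{cor:2connZ} already covers that case.

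As a further check, the final part of the sequence
\[
0=\pi_3(Z_{\Hh}(\Lambda))?\ \to\pi_3(N_{\Hh}(\Lambda))\to\pi_2(\Ss^{3})=0
\]
shows that $\pi_3(N_{\Hh}(\Lambda))$ is a quotient of $\pi_3(Z_{\Hh}(\Lambda))$. This is relevant only to the later $3$-connectivity statement and is not needed here.
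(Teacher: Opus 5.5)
Your proposal is correct and follows the same route as the paper, whose proof applies the homotopy exact sequence of the principal $\Ss^{3}$-bundle $Z_{\Hh}(\Lambda)\to N_{\Hh}(\Lambda)$ together with Corollary~\ref{cor:2connZ} and the vanishing of the low homotopy groups of $\Ss^{3}$. Your treatment is slightly more careful in two respects: you use the fibre terms $\pi_0(\Ss^{3})$ and $\pi_1(\Ss^{3})$, which are the ones actually needed for $j=1,2$, and you check the low-degree exactness and path-connectedness explicitly.
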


\begin{proof}
Use the homotopy exact sequence and
Corollary~\ref{cor:2connZ}, together with
\[
\pi_1(\Ss^{3})=\pi_2(\Ss^{3})=0.
\]
\end{proof}

Thus
\[
H^1(N_{\Hh};\Zz)=H^2(N_{\Hh};\Zz)=0.
\]

The case without indispensable coordinates is stronger.  For the
principal $\Sp(1)$ bundle
\[
\Sp(1)\longrightarrow Z_{\Hh}(\Lambda)\longrightarrow N_{\Hh}(\Lambda),
\]
we denote by
\[
e\in H^4(N_{\Hh}(\Lambda);\Zz)
\]
the Euler class of the associated oriented real rank-four bundle
$Z_{\Hh}(\Lambda)\times_{\Sp(1)}\Hh$.

\begin{theorem}\label{thm:H4}
Assume that $\Lambda$ has no indispensable coordinates.  Then
$N_{\Hh}(\Lambda)$ is $3$-connected and
\[
H^4(N_{\Hh}(\Lambda);\Zz)\cong\Zz.
\]
The Euler class
\[
e\in H^4(N_{\Hh}(\Lambda);\Zz)
\]
of the principal $\Ss^{3}$ bundle is a generator.
\end{theorem}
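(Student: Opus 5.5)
Under the hypothesis of no indispensable coordinates, first show that $Z_{\Hh}(\Lambda)$ is $6$-connected, and then read off everything from the long exact homotopy sequence and the Gysin sequence of the principal bundle $\Ss^{3}\to Z_{\Hh}(\Lambda)\to N_{\Hh}(\Lambda)$.

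\emph{Connectivity of $Z_{\Hh}(\Lambda)$.} With no indispensable coordinates, every singleton $\{i\}$ is a simplex of $K_P$, since each $x_i=0$ defines a facet. Every pair $\{i,j\}$ is also a simplex. Indeed, $P_\Lambda$ has dimension $d=n-4m-1$. If $d\ge2$, two distinct facets of a simple polytope could fail to meet. To rule this out I use weak hyperbolicity through Proposition~\ref{prop:faces}: $F_{\{i,j\}}\ne\varnothing$ iff $0\in\conv\{\lambda_k:k\ne i,j\}$. I expect this to be the main obstacle, and I would first examine whether it really holds. If it fails, the fallback is to work directly with Theorem~\ref{thm:hochster}, where a missing edge $\{i,j\}$ contributes a class in degree $3\cdot2+1-1=6$. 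The Hochster summand for $J$ lives in degrees $q=3|J|+1+k$ with $\widetilde H^k((K_P)_J)\ne0$. For $|J|=1$ the full subcomplex is a point (not a ghost vertex, by hypothesis), so it contributes nothing. For $|J|=2$ it contributes only if $(K_P)_J$ is two points, giving degree $7$. For $|J|\ge3$ the degree is $\ge10$ when $k\ge0$. Hence $\widetilde H^q(Z_{\Hh}(\Lambda);\Zz)=0$ for $q\le6$ in every case, so the missing-edge question does not actually matter in low degrees. Since $Z_{\Hh}(\Lambda)$ is simply connected by Corollary~\ref{cor:2connZ}, Hurewicz gives that $Z_{\Hh}(\Lambda)$ is $6$-connected.

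\emph{Connectivity and $H^4$ of $N_{\Hh}(\Lambda)$.} The homotopy exact sequence gives $\pi_k(N_{\Hh})\cong\pi_{k-1}(\Ss^3)$ for $k\le6$. Therefore $\pi_3(N_{\Hh})\cong\pi_2(\Ss^3)=0$, and $\pi_4(N_{\Hh})\cong\pi_3(\Ss^3)\cong\Zz$. Thus $N_{\Hh}(\Lambda)$ is $3$-connected, and by Hurewicz $H_4(N_{\Hh};\Zz)\cong\Zz$. Universal coefficients, with $H_3=0$, then give $H^4(N_{\Hh};\Zz)\cong\operatorname{Hom}(H_4,\Zz)\cong\Zz$.

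\emph{The Euler class is a generator.} Use the Gysin sequence of the oriented sphere bundle $Z_{\Hh}(\Lambda)\to N_{\Hh}(\Lambda)$:
\[
H^3(Z_{\Hh})\to H^0(N_{\Hh})\xrightarrow{\ \cup e\ }H^4(N_{\Hh})\to H^4(Z_{\Hh}).
\]
The two outer groups vanish by $6$-connectivity, so cup product with $e$ is an isomorphism $\Zz\to H^4(N_{\Hh};\Zz)$. It sends $1$ to $e$, so $e$ generates.

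As an alternative check, the classifying map $N_{\Hh}\to B\Sp(1)=\Hh P^\infty$ is $7$-connected, since its homotopy fibre is $Z_{\Hh}$. This again identifies $e$ with the pullback of the generator of $H^4(\Hh P^\infty)$.
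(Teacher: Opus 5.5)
Your argument is correct and follows the paper's route: low-degree vanishing for $Z_{\Hh}(\Lambda)$ from the Hochster decomposition, then Hurewicz, the homotopy exact sequence of the $\Ss^3$-bundle, and the Gysin sequence; your detour through $\pi_4(N_{\Hh})\cong\Zz$ and universal coefficients is redundant, since the Gysin step already gives $H^4(N_{\Hh};\Zz)\cong\Zz$. Tidy a few side remarks, none of which affects the conclusion: a missing edge contributes in degree $3\cdot2+1=7$, not $6$ (your later count is the right one); pairs of vertices need not span edges of $K_P$ (a heptagon with $m=1$, $n=7$ gives counterexamples); and for Hurewicz you should cite the homology half of the Hochster decomposition rather than the cohomology half.
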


\begin{proof}
By Theorem~\ref{thm:hochster}, in the absence of indispensable coordinates,
\[
H_3(Z_{\Hh};\Zz)=0,
\qquad
H^3(Z_{\Hh};\Zz)=H^4(Z_{\Hh};\Zz)=0.
\]
Indeed, the only possible low-degree contributions come from very small
subsets $J$, and their full subcomplexes are simplices; their reduced
homology and cohomology therefore vanish in the required degrees.

Since $Z_{\Hh}$ is $2$-connected, the Hurewicz homomorphism
$\pi_3(Z_{\Hh})\to H_3(Z_{\Hh};\Zz)$ is an isomorphism.  Hence
$\pi_3(Z_{\Hh})=0$.  The homotopy exact sequence of the principal $\Ss^{3}$
bundle then gives
\[
\pi_3(N_{\Hh})=0.
\]
Thus $N_{\Hh}$ is $3$-connected.

The Gysin sequence contains
\[
H^3(Z_{\Hh})
\longrightarrow H^0(N_{\Hh})
\stackrel{\smile e}{\longrightarrow}
H^4(N_{\Hh})
\longrightarrow H^4(Z_{\Hh}).
\]
The two outer groups vanish, so multiplication by $e$ identifies
$H^0(N_{\Hh})\cong\Zz$ with $H^4(N_{\Hh})$.
\end{proof}

In the absence of indispensable coordinates, this degree-four class is the
first nontrivial cohomology class of the LVMQ manifold.  It will also appear in its characteristic classes.

\subsection{Dependence on the polytope}

The quotient model also gives a topological invariance statement.

\begin{theorem}\label{thm:combinatorial}
Suppose $\Lambda$ and $\Lambda'$ have combinatorially equivalent associated
polytopes, with the equivalence preserving the indispensable coordinates.
Then
\[
N_{\Hh}(\Lambda)\cong N_{\Hh}(\Lambda')
\]
as topological manifolds.
\end{theorem}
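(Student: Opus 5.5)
The plan is to show that $Z_{\Hh}(\Lambda)$ is determined, up to a homeomorphism equivariant for both the coordinatewise right $\Sp(1)^n$ action and the diagonal left $\Sp(1)$ action, by the combinatorial type of $P_\Lambda$ together with the set of indispensable coordinates. The statement about $N_{\Hh}$ then follows by passing to the quotient by the diagonal left action.

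First, recall the canonical model from the proof of Theorem~\ref{thm:polyprod}:
\[
Z_{\Hh}(\Lambda)\cong(\Sp(1)^n\times P_\Lambda)/\!\sim,
\]
where the $i$th factor is collapsed over the facet $\{x_i=0\}$. Indispensable coordinates never collapse. The diagonal left action $a\cdot(u,x)=(au_1,\ldots,au_n,x)$ on this model corresponds to $q\mapsto aq$ under polar coordinates $q_i=\sqrt{x_i}\,u_i$. This action is well defined on the quotient, since left multiplication preserves the relation $u_i=u_i'$ for $i$ with $x_i>0$, and it commutes with the right $\Sp(1)^n$ action.

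Second, suppose given a combinatorial equivalence between $P_\Lambda$ and $P_{\Lambda'}$ that preserves indispensable coordinates. It induces a bijection $\sigma$ of $[n]$ that matches facets $\{x_i=0\}$ with facets $\{x'_{\sigma(i)}=0\}$ and matches indispensable coordinates with indispensable coordinates. After renumbering the coordinates of $\Lambda'$ by $\sigma$, which is harmless because permuting quaternionic coordinates commutes with diagonal left multiplication, we may assume $\sigma=\mathrm{id}$. The key geometric input is the standard fact that combinatorially equivalent simple polytopes are homeomorphic by a face-preserving homeomorphism $h:P_\Lambda\to P_{\Lambda'}$. One constructs $h$ inductively over skeleta of the barycentric subdivision, or via the cubical decomposition of a simple polytope, whose cubes are indexed by vertices and faces combinatorially. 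This map sends each facet $x_i=0$ onto the facet $x'_i=0$.

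Third, define
\[
\Sp(1)^n\times P_\Lambda\to\Sp(1)^n\times P_{\Lambda'},\qquad (u,x)\mapsto(u,h(x)).
\]
Because $h$ preserves the support pattern $\{i:x_i>0\}$, this map respects $\sim$ and descends to a continuous bijection of compact Hausdorff spaces, hence a homeomorphism $Z_{\Hh}(\Lambda)\to Z_{\Hh}(\Lambda')$. It is visibly equivariant for the diagonal left $\Sp(1)$ action, which acts only on the $u$ variable. Dividing by this action gives the homeomorphism $N_{\Hh}(\Lambda)\cong N_{\Hh}(\Lambda')$.

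The main obstacle is the second step. One must ensure that $h$ exists as a homeomorphism respecting every face, including lower-dimensional ones, and not merely the facets. For this I would use the cubical decomposition: both polytopes are cut into cubes $C_v$, one for each vertex $v$, with identical combinatorics. One then maps each cube $C_v$ affinely onto the corresponding cube $C'_v$ in a way compatible on common faces, taking the barycenters of faces to barycenters. A secondary check is that the canonical model really identifies the topology of $Z_{\Hh}(\Lambda)$, and not just its underlying set. This holds because the map $(u,x)\mapsto(\sqrt{x_i}\,u_i)_i$ is a continuous surjection from a compact space onto the Hausdorff space $Z_{\Hh}(\Lambda)$, with fibres exactly the $\sim$-classes.
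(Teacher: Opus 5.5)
Your proposal is correct and follows the paper's own argument: a face-preserving homeomorphism $h\colon P_\Lambda\to P_{\Lambda'}$ induces $(u,x)\mapsto(u,h(x))$ on the models $(\Sp(1)^n\times P)/\!\sim$, this map leaves the phases unchanged, and so it descends to the quotients by the diagonal left $\Sp(1)$ action. You also supply three points the paper leaves implicit: the facet labelling, the existence of $h$, and the compact-to-Hausdorff identification of the model with $Z_{\Hh}(\Lambda)$. For $h$, use the simplicial isomorphism of barycentric subdivisions induced by the face-lattice isomorphism rather than ``affine'' maps of the cubes $C_v$, since those cubes need not be affine cubes.
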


\begin{proof}
A face-preserving homeomorphism
$P_\Lambda\to P_{\Lambda'}$ induces a homeomorphism between the two quotient
models
\[
(\Sp(1)^n\times P)/\sim.
\]
This homeomorphism leaves the quaternionic phase coordinates unchanged and
therefore commutes with the diagonal left $\Sp(1)$ action.  Passing to the
diagonal quotient gives the result.
\end{proof}

\begin{remark}
Theorem~\ref{thm:combinatorial} gives a homeomorphism.  A smooth
classification under labelled combinatorial equivalence is left open.
\end{remark}

\section{Characteristic classes}

The principal bundle above is the restriction of the quaternionic Hopf
bundle
\[
\Ss^{3}\longrightarrow \Ss^{4n-1}\longrightarrow\Hh P^{n-1}.
\]
Let
\[
i:N_{\Hh}(\Lambda)\hookrightarrow\Hh P^{n-1}
\]
be the natural inclusion and let
\[
u\in H^4(\Hh P^{n-1};\Zz)
\]
be the standard generator.  Then
\[
e=i^\ast u.
\]

There is also a direct description of the normal bundle.  Define
\[
f:\Hh P^{n-1}\longrightarrow\Rr^{4m},
\qquad
f([q])=
\frac{\sum_i\lambda_i|q_i|^2}{\sum_i|q_i|^2}.
\]
Then
\[
N_{\Hh}(\Lambda)=f^{-1}(0).
\]

\begin{proposition}\label{prop:tangent}
The normal bundle of $N_{\Hh}(\Lambda)$ in $\Hh P^{n-1}$ is trivial and
\[
TN_{\Hh}(\Lambda)\oplus\Rr^{4m}
\cong i^\ast T\Hh P^{n-1}.
\]
\end{proposition}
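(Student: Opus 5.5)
The plan is to realize $N_{\Hh}(\Lambda)$ as a regular level set of the smooth map $f:\Hh P^{n-1}\to\Rr^{4m}$ defined above. The normal bundle of a regular level set $f^{-1}(0)$ is trivialized by $df$: at every point $x\in N_{\Hh}(\Lambda)$ the differential $df_x:T_x\Hh P^{n-1}\to\Rr^{4m}$ is onto with kernel $T_xN_{\Hh}(\Lambda)$. This gives a bundle isomorphism $\nu\cong N_{\Hh}(\Lambda)\times\Rr^{4m}$. Choosing a Riemannian metric (for instance the Fubini--Study metric) splits
\[
i^\ast T\Hh P^{n-1}\cong TN_{\Hh}(\Lambda)\oplus\nu,
\]
and the displayed isomorphism follows.

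First I check that $f$ is well defined and smooth. The numerator and denominator are invariant under left multiplication by $\Sp(1)$, since $|aq_i|=|q_i|$. The quotient is invariant under positive homotheties, since it is homogeneous of degree zero. The denominator never vanishes on $\Hh^n\setminus\{0\}$. Hence $f$ descends to a smooth map on $\Hh P^{n-1}$.

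Next I identify the zero set. We have $f([q])=0$ if and only if $\sum_i\lambda_i|q_i|^2=0$. After normalizing $\|q\|=1$, this is exactly the condition $q\in Z_{\Hh}(\Lambda)$. So $f^{-1}(0)=Z_{\Hh}(\Lambda)/\Sp(1)=N_{\Hh}(\Lambda)$, and this identification agrees with the embedding $i$.

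The main step is regularity of $0$. I would lift to the sphere rather than work intrinsically on $\Hh P^{n-1}$. Let $g:\Ss^{4n-1}\to\Rr^{4m}$ be $g(q)=\sum_i\lambda_i|q_i|^2$, so $g=f\circ\pi$, where $\pi$ is the Hopf submersion. Since $\pi$ is a surjective submersion, $df_{[q]}$ is onto exactly when $dg_q$ is onto. Surjectivity of $dg_q$ on $T_q\Ss^{4n-1}$ is exactly what the proof of Theorem~\ref{thm:smooth} supplies. There, $dF_q$ is onto $\Rr^p\times\Rr$ for $F=(g,\|q\|^2)$ on $\Hh^n$ at every $q\in Z_{\Hh}(\Lambda)$; this used weak hyperbolicity and Carath\'eodory to show that the augmented vectors $\widetilde\lambda_i$, $i\in I(q)$, span $\Rr^{p+1}$. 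Given $v\in\Rr^{4m}$, choose $h\in\Hh^n$ with $dF_q(h)=(v,0)$. The second component says $h\perp q$, so $h\in T_q\Ss^{4n-1}$, and $dg_q(h)=v$. Hence $dg_q|_{T\Ss^{4n-1}}$ is onto, and therefore so is $df_{[q]}$.

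I do not expect a serious obstacle. The only point needing care is this passage from regularity of $(0,1)$ for $F$ on $\Hh^n$ to regularity of $0$ for $f$ on $\Hh P^{n-1}$, which the sphere lift handles. The dimension count $(4n-4)-4m$ then agrees with $\dim N_{\Hh}(\Lambda)$, which confirms that the kernel of $df$ is exactly $TN_{\Hh}(\Lambda)$.
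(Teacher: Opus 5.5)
Your proposal is correct and follows the paper's proof: you realize $N_{\Hh}(\Lambda)$ as the regular zero set $f^{-1}(0)$ of $f:\Hh P^{n-1}\to\Rr^{4m}$ and let $df$ trivialize the normal bundle. Your lift to $\Ss^{4n-1}$ via $g=f\circ\pi$ uses the surjectivity of $dF_q$ from Theorem~\ref{thm:smooth}, choosing a preimage whose second component is zero. This is just a careful spelling-out of the paper's phrase ``the regularity argument of Theorem~\ref{thm:smooth}, after projectivization.''
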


\begin{proof}
The regularity argument of Theorem~\ref{thm:smooth}, after projectivization,
shows that $0$ is a regular value of $f$.  Since the target is the vector
space $\Rr^{4m}$, the differential provides a trivialization of the normal
bundle.
\end{proof}

The standard formula
\[
p_1(T\Hh P^r)=2(r-1)u
\]
now gives the following.

\begin{corollary}\label{cor:p1}
For every LVMQ manifold,
\[
p_1(TN_{\Hh}(\Lambda))=2(n-2)e.
\]
In particular $N_{\Hh}(\Lambda)$ is spin.
\end{corollary}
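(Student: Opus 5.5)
The argument has two parts: the Pontryagin class formula, obtained by pulling back along $i$, and the spin property, which follows from connectivity.

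\textbf{The formula for $p_1$.} We start from Proposition~\ref{prop:tangent}, which gives the stable isomorphism $TN_{\Hh}(\Lambda)\oplus\Rr^{4m}\cong i^\ast T\Hh P^{n-1}$. Pontryagin classes are stable under adding trivial bundles; the Whitney formula holds exactly for $p_1$ modulo $2$-torsion. So we apply the total Pontryagin class to both sides:
\[
p_1(TN_{\Hh}(\Lambda))=i^\ast p_1(T\Hh P^{n-1}).
\]
Taking $r=n-1$ in the standard formula $p_1(T\Hh P^{r})=2(r-1)u$ gives $p_1(T\Hh P^{n-1})=2(n-2)u$. Since $e=i^\ast u$ (the bundle $Z_{\Hh}(\Lambda)\to N_{\Hh}(\Lambda)$ is the restriction of the Hopf bundle), naturality yields
\[
p_1(TN_{\Hh}(\Lambda))=2(n-2)e.
\]
This equality holds on the nose in $H^4(N_{\Hh}(\Lambda);\Zz)$: adding a trivial summand does not change $p_1$, so no torsion ambiguity arises.

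\textbf{Spin.} By Theorem~\ref{thm:2connN}, $N_{\Hh}(\Lambda)$ is $2$-connected, so $H^1(N_{\Hh};\Zz/2)=H^2(N_{\Hh};\Zz/2)=0$ by the universal coefficient theorem. In particular $w_1=w_2=0$, and the manifold is orientable and spin. We can also argue directly: $w_i$ is stable, so $w_2(TN_{\Hh})=i^\ast w_2(T\Hh P^{n-1})$, and $H^2(\Hh P^{n-1};\Zz/2)=0$. This second route avoids connectivity, but the first is enough.

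\textbf{Main obstacle.} The delicate point is the standard formula for $p_1(T\Hh P^{r})$ itself. If we need to verify it, we would use the decomposition $T\Hh P^r\oplus(\gamma\otimes_{\Hh}\bar\gamma)\cong(r+1)\,\bar\gamma\otimes_{\Hh}\gamma$-type identities, or Borel--Hirzebruch: $p(T\Hh P^r)=(1+u)^{2r+2}(1+4u)^{-1}$, so $p_1=(2r+2-4)u=2(r-1)u$. The sign convention for $u$ must match that of $e$; since the coefficient is determined up to the choice of generator and both $u$ and $e$ come from the same Hopf bundle, this is consistent. The coefficient does not depend on that sign, because $p_1$ is insensitive to orientation reversal of the rank-four bundle.
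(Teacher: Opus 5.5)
Your proof is the paper's proof. You use the stable isomorphism of Proposition~\ref{prop:tangent}, the stability and naturality of $p_1$, the formula $p_1(T\Hh P^{r})=2(r-1)u$ with $r=n-1$, and $e=i^\ast u$. Spin then follows from $2$-connectedness, as in the paper. Your second route to $w_2=0$ is also correct and needs only Proposition~\ref{prop:tangent}: $w_2(TN_{\Hh}(\Lambda))=i^\ast w_2(T\Hh P^{n-1})$ and $H^2(\Hh P^{n-1};\Zz/2)=0$.

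\emph{The sign remark.} The last sentence of your final paragraph is false. Let $\xi=Z_{\Hh}(\Lambda)\times_{\Sp(1)}\Hh$. It is true that $p_1(TN_{\Hh}(\Lambda))$ does not see the orientation of $\xi$, but $e$ does. Reversing the orientation replaces $e$ by $-e$, and the identity becomes $p_1=-2(n-2)e$. With no indispensable coordinates, $e$ has infinite order and $n-2>0$, so the two versions really differ and the coefficient does depend on the convention.

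Saying that $u$ and $e$ come from the same Hopf bundle does not fix the sign either. Regard $\xi$ as a complex rank-two bundle with $c_1=0$ via $\Cc\subset\Hh$. Then $p_1(\xi)=-2c_2(\xi)$, and $c_2(\xi)$ is the Euler class for the complex orientation. So with that orientation the formula would carry a minus sign.

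What makes the corollary true is the paper's implicit convention: $e=i^\ast u$ with $u$ the Borel--Hirzebruch generator, characterized by $p_1(\gamma)=2u$ for the tautological bundle $\gamma$. The orientation-free content is
\[
p_1(TN_{\Hh}(\Lambda))=(n-2)\,p_1(\xi),
\]
which follows by pulling back $p_1(T\Hh P^{r})=(r-1)\,p_1(\gamma)$. You should state this, or fix the orientation of $\xi$ explicitly, rather than claim the coefficient is sign-independent.
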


\begin{proof}
The Pontryagin formula follows from
Proposition~\ref{prop:tangent}.  Since $N_{\Hh}$ is $2$-connected,
$H^2(N_{\Hh};\Zz/2)=0$, so $w_2=0$.
\end{proof}

\begin{corollary}
If there are no indispensable coordinates, then
\[
\frac{p_1}{2}=(n-2)e\neq0.
\]
Consequently these LVMQ manifolds are not string manifolds; for a spin
manifold the string obstruction is the class $p_1/2\in H^4(-;\Zz)$.
\end{corollary}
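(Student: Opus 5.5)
The plan is to combine the Pontryagin formula of Corollary~\ref{cor:p1} with the computation of $H^4$ in Theorem~\ref{thm:H4}, and then to identify the spin characteristic class $p_1/2$ inside the torsion-free group $H^4(N_{\Hh}(\Lambda);\Zz)$.

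First, since there are no indispensable coordinates, Theorem~\ref{thm:H4} gives two facts. The manifold $N_{\Hh}(\Lambda)$ is $3$-connected, and $H^4(N_{\Hh}(\Lambda);\Zz)\cong\Zz$ with generator the Euler class $e$. By Corollary~\ref{cor:p1}, $N_{\Hh}(\Lambda)$ is spin. Moreover, $H^1(N_{\Hh};\Zz/2)=0$ by $2$-connectivity, so the spin structure is unique. Hence the class $\frac{p_1}{2}\in H^4(N_{\Hh};\Zz)$ is canonically defined: it is the pullback of the universal class in $H^4(B\mathrm{Spin};\Zz)\cong\Zz$ under the classifying map of the stable tangent bundle with its unique spin structure. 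That universal class satisfies $2\cdot\frac{p_1}{2}=p_1$.

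Next I will use torsion-freeness. Because $H^4(N_{\Hh};\Zz)\cong\Zz$, an element $x$ with $2x=p_1$ is unique. Corollary~\ref{cor:p1} gives $p_1=2(n-2)e$, and $(n-2)e$ satisfies this equation, so $\frac{p_1}{2}=(n-2)e$. This uniqueness step is the only point that needs care. Without it, $p_1/2$ would only be determined modulo $2$-torsion, and one would have to argue through the restriction of the universal class from $\Hh P^{n-1}$ instead. The $3$-connectivity, and hence torsion-freeness in degree $4$, removes this issue.

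For non-vanishing, admissibility forces $n>4m\ge4$, so $n-2\ge3$. Since $e$ generates an infinite cyclic group, $(n-2)e\neq0$. Finally, a string structure on a spin manifold exists exactly when $\frac{p_1}{2}=0$ in $H^4(-;\Zz)$. That is the obstruction to lifting the classifying map from $B\mathrm{Spin}$ to $B\mathrm{String}$, which is the homotopy fibre of $\frac{p_1}{2}:B\mathrm{Spin}\to K(\Zz,4)$. Therefore $N_{\Hh}(\Lambda)$ admits no string structure.
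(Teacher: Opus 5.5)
Your proposal is correct and follows the paper's proof: it combines $p_1=2(n-2)e$ from Corollary~\ref{cor:p1} with $H^4(N_{\Hh}(\Lambda);\Zz)=\Zz e$ from Theorem~\ref{thm:H4} and the bound $n>4m\ge4$. Your additional step, identifying $p_1/2$ as the unique half of $p_1$ in the torsion-free group $H^4$ with the spin structure unique by $2$-connectivity, makes explicit a point the paper leaves implicit.
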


\begin{proof}
By Theorem~\ref{thm:H4}, $e$ generates an infinite cyclic group, and
$n>4m\geq4$.
\end{proof}

\section{The minimal case}

Since admissibility implies $n>4m$, the smallest possible value of $n$ is
$4m+1$.  We call this the minimal case.  In this case the configuration has
exactly $4m+1$ points in $\Rr^{4m}$; full affine rank therefore means that
these points are the vertices of a $4m$-simplex.

Assume
\[
n=4m+1
\]
and that the origin lies in the interior of this simplex.  Then
$P_\Lambda$ is a point.  The barycentric coordinates of the origin are
unique and positive, hence all radii $|q_i|$ are fixed.

\begin{proposition}\label{prop:minimal}
In this case
\[
Z_{\Hh}(\Lambda)\cong(\Ss^{3})^n,
\qquad
N_{\Hh}(\Lambda)\cong(\Ss^{3})^{n-1}=(\Ss^{3})^{4m}.
\]
\end{proposition}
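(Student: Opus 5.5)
The plan has two steps: identify $Z_{\Hh}(\Lambda)$ as a product of round spheres, then take the diagonal quotient by the free left $\Sp(1)$ action.

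\emph{The total space.} Let $x^0=(x^0_1,\ldots,x^0_n)$ be the barycentric coordinates of the origin in the simplex spanned by $\lambda_1,\ldots,\lambda_n$. Since the $\lambda_i$ are affinely independent, $x^0$ is the unique solution of
\[
\sum_i\lambda_ix_i=0,\qquad \sum_ix_i=1 .
\]
Because the origin is interior, every $x^0_i>0$, so $P_\Lambda=\{x^0\}$. By Theorem~\ref{thm:polytope}, $Z_{\Hh}(\Lambda)=\mu^{-1}(x^0)$. This set is
\[
\{q:|q_i|=r_i\},\qquad r_i=\sqrt{x^0_i}>0 ,
\]
which is a product of round $3$-spheres. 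Equivalently, the map $(u_1,\ldots,u_n)\mapsto(r_1u_1,\ldots,r_nu_n)$ is an $\Sp(1)^n$-equivariant diffeomorphism $\Sp(1)^n\to Z_{\Hh}(\Lambda)$. This gives $Z_{\Hh}(\Lambda)\cong(\Ss^3)^n$. It is also consistent with Theorem~\ref{thm:polyprod}: every coordinate is indispensable (ghost), so each contributes an $\Ss^3$ factor.

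\emph{The quotient.} Under this identification, the diagonal left action $a\cdot q=(aq_1,\ldots,aq_n)$ becomes left multiplication of $a$ on each factor $u_i$, because $a(r_iu_i)=r_i(au_i)$ with $r_i$ real. So it suffices to show that
\[
\Sp(1)\backslash\Sp(1)^n\cong\Sp(1)^{n-1}
\]
for the diagonal left action. I would use the map
\[
(u_1,\ldots,u_n)\longmapsto(u_1^{-1}u_2,\ldots,u_1^{-1}u_n).
\]
It is invariant under $u_i\mapsto au_i$, since $(au_1)^{-1}(au_j)=u_1^{-1}u_j$. It is a smooth surjection onto $\Sp(1)^{n-1}$, and its fibres are exactly the diagonal orbits. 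It has a global smooth section $(v_2,\ldots,v_n)\mapsto(1,v_2,\ldots,v_n)$. Hence the map descends to a diffeomorphism $N_{\Hh}(\Lambda)\to(\Ss^3)^{n-1}$, and in fact the principal bundle $Z_{\Hh}(\Lambda)\to N_{\Hh}(\Lambda)$ is trivial. Finally $n-1=4m$.

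No step is a real obstacle. The only point needing care is the quotient step, and the explicit global section handles it: it makes the principal bundle visibly trivial, so the quotient is a smooth manifold diffeomorphic to the base and no slice theorem is needed.
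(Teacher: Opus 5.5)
Your proposal is correct and follows essentially the same route as the paper: the radii $|q_i|$ are fixed by the unique positive barycentric coordinates of the origin, so only the quaternionic phases vary and $Z_{\Hh}(\Lambda)\cong(\Ss^3)^n$. The diagonal left $\Sp(1)$ action is then removed by normalizing one phase. Your explicit gauge map $(u_1,\ldots,u_n)\mapsto(u_1^{-1}u_2,\ldots,u_1^{-1}u_n)$, together with its global section, just spells out in detail the paper's one-line normalization argument.
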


\begin{proof}
Only the quaternionic phases vary in $Z_{\Hh}$.  Quotienting by the diagonal
left $\Ss^{3}$ action allows one to normalize one of these phases.
\end{proof}

In the minimal case all coordinates are indispensable and
\[
H^4(N_{\Hh};\Rr)=0.
\]

The equal-radius product of the standard round metrics on
\[
(\Ss^{3})^{4m}\cong \Sp(1)^{4m}
\]
is, up to an overall scale, the standard bi-invariant product metric on
the compact Lie group $\Sp(1)^{4m}$.  It is Einstein with positive scalar
curvature.  This metric need not coincide with the metric obtained by
Riemannian submersion from the original intersection of quadrics.

\section{Consequences for special geometry}

The Euclidean metric of $\Hh^n$ restricts to a natural invariant metric on
$Z_{\Hh}(\Lambda)$.  The free diagonal $\Ss^{3}$ action gives a quotient metric
on $N_{\Hh}(\Lambda)$.  These metrics need not be Einstein and need not have special holonomy.

There is, however, an immediate obstruction to several familiar geometries.

\begin{theorem}\label{thm:nosymp}
A positive-dimensional compact LVMQ manifold admits no symplectic form.
Consequently it admits no K\"ahler or hyperk\"ahler metric.
\end{theorem}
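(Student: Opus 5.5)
The plan is the standard cohomological obstruction. A symplectic form $\omega$ on a closed manifold $M$ of dimension $2k>0$ has $[\omega]\in H^2(M;\Rr)$ with $[\omega]^k\neq0$, since $\omega^k$ is a volume form. By Theorem~\ref{thm:2connN}, every LVMQ manifold is $2$-connected. Hence $H_1=0$, and Hurewicz gives $H_2(N_{\Hh}(\Lambda);\Zz)\cong\pi_2=0$. Universal coefficients then yield $H^2(N_{\Hh}(\Lambda);\Rr)=0$. So $[\omega]=0$, hence $[\omega]^k=0$, which contradicts $\int_{N}\omega^k\neq0$ for the orientation induced by $\omega$.

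The argument needs a few side checks. First, $N_{\Hh}(\Lambda)$ is compact, as the quotient of the compact manifold $Z_{\Hh}(\Lambda)$ of Theorem~\ref{thm:smooth} by a free $\Sp(1)$ action. It is also closed, without boundary. Second, its dimension $4n-4m-4$ is even, so the question is meaningful. Third, positive dimension means $n\ge 4m+2$ with $m\ge1$, although only $\dim>0$ is used. I would also note that the claim holds without assuming the absence of indispensable coordinates. Only $2$-connectivity is used, not Theorem~\ref{thm:H4}.

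Finally, Kähler and hyperkähler metrics each carry a Kähler form, which is a symplectic form. For hyperkähler metrics, any one of the three Kähler forms serves. So these structures are excluded as well. There is no real obstacle here. The only point requiring care is the step from $\pi_2=0$ to $H^2(-;\Rr)=0$, which needs both $H_1=0$ and $H_2=0$; both follow from $2$-connectivity via Hurewicz.
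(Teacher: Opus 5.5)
Your argument is correct and is essentially the paper's proof: Theorem~\ref{thm:2connN} gives $H^2(N_{\Hh}(\Lambda);\Rr)=0$, while a symplectic form on a closed manifold must have a nonzero cohomology class because its top power is a volume form. One harmless slip in your side checks is that positive dimension does not require $n\ge 4m+2$: already in the minimal case $n=4m+1$ one has $\dim N_{\Hh}(\Lambda)=4(n-m-1)=12m>0$, so every LVMQ manifold with $m\ge1$ is positive-dimensional, and this remark plays no role in your argument anyway.
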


\begin{proof}
By Theorem~\ref{thm:2connN},
\[
H^2(N_{\Hh};\Rr)=0.
\]
If $\omega$ were symplectic, then $[\omega]\neq0$, since a top power of
$\omega$ is a volume form on the compact manifold.  This is impossible.
\end{proof}

For real dimension at least eight, a quaternionic K\"ahler manifold is
a Riemannian manifold with holonomy contained in
$\Sp(r)\Sp(1)\subset SO(4r)$.  Such a manifold carries a parallel
closed fundamental $4$-form whose top power is a volume form.  Hence its degree-four cohomology cannot vanish.
This gives:

\begin{corollary}
The minimal LVMQ manifolds
\[
(\Ss^{3})^{4m}
\]
do not admit quaternionic K\"ahler metrics.
\end{corollary}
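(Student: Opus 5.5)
The plan is to argue exactly as the paragraph preceding the corollary suggests, by comparing degree-four real cohomology. By Proposition~\ref{prop:minimal}, in the minimal case $n=4m+1$ we have $N_{\Hh}(\Lambda)\cong(\Ss^{3})^{4m}$, a compact manifold of real dimension $12m=4r$ with $r=3m$. The K\"unneth formula gives $H^\ast((\Ss^{3})^{4m};\Rr)$ as an exterior algebra on $4m$ generators of degree $3$. Consequently every nonzero class has degree divisible by $3$, so $H^4((\Ss^{3})^{4m};\Rr)=0$, which is the vanishing already recorded after Proposition~\ref{prop:minimal}.

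Next, suppose for contradiction that $g$ is a quaternionic K\"ahler metric. Since $4r=12m\ge 12\ge 8$, the dimension hypothesis in the definition recalled above holds, so the holonomy is contained in $\Sp(r)\Sp(1)$. Let $\Omega$ be the fundamental (Kraines) $4$-form. It is parallel, hence closed, and so it defines a class $[\Omega]\in H^4(N;\Rr)$. The key fact to invoke is that $\Omega^{r}$ is a nonzero multiple of the Riemannian volume form. This holds pointwise in the flat model $\Hh^r$, where $\Omega=\omega_I^2+\omega_J^2+\omega_K^2$ and $\Omega^r$ is a positive multiple of the volume form, and it transfers to $N$ because $\Omega$ is parallel.

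Integrating over the compact oriented manifold $N$ gives $\int_N\Omega^r\neq0$. Hence $[\Omega]^r\neq0$, and in particular $[\Omega]\neq0$ in $H^4(N;\Rr)$. This contradicts the vanishing from the first step, so no quaternionic K\"ahler metric exists.

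The only step that is not formal is the nondegeneracy $\Omega^r\ne0$. I would simply cite the standard computation (Kraines, Bonan) rather than redo it, since the paper states it as known. A possible subtlety is orientability, but $(\Ss^3)^{4m}$ is a Lie group and hence orientable, and in any case holonomy in $\Sp(r)\Sp(1)\subset SO(4r)$ already gives an orientation.
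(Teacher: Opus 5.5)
Your proof is correct and follows essentially the same route as the paper. On one side, $H^4((\Ss^{3})^{4m};\Rr)=0$; on the other, a quaternionic K\"ahler metric in real dimension $12m\ge 8$ would carry a closed parallel fundamental $4$-form $\Omega$ with $\Omega^{3m}$ a nonzero multiple of the volume form, forcing $[\Omega]\neq0$. You merely spell out the K\"unneth computation and the integration step $\int_N\Omega^{r}\neq0$, which the paper leaves implicit.
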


For LVMQ manifolds without indispensable coordinates,
$H^4\cong\Zz$, so this particular obstruction disappears.  Their
characteristic class
\[
p_1=2(n-2)e
\]
provides a more refined constraint which should be compared with the
characteristic classes of known quaternionic K\"ahler manifolds.

\subsection{The $3$-Sasakian question}

The total space has dimension
\[
\dim Z_{\Hh}(\Lambda)=4(n-m-1)+3,
\]
and it has a free diagonal $\Ss^{3}$ action.  These are necessary features of
many $3$-Sasakian constructions, but they do not imply a $3$-Sasakian
structure.

The standard $3$-Sasakian reduction of a sphere is defined by a specific
moment-map equation.  Our equations
\[
\sum_i\lambda_i|q_i|^2=0
\]
are of a different form.  Thus no $3$-Sasakian statement follows formally
from the construction.

\begin{problem}
Determine those configurations, if any beyond homogeneous special cases, for
which $Z_{\Hh}(\Lambda)$ admits a $3$-Sasakian metric compatible with the
diagonal $\Ss^{3}$ action.
\end{problem}

\section{Commuting quaternionic coefficients}

Write
\[
\lambda_\alpha=
(\lambda_\alpha^1,\ldots,\lambda_\alpha^m).
\]
A direct quaternionic analogue of the complex linear LVM fields is obstructed
by noncommutativity.  There is a special commuting case.

Consider the right-linear fields
\[
V_r(q)_\alpha=q_\alpha\lambda_\alpha^r,
\qquad r=1,\ldots,m.
\]

\begin{proposition}\label{prop:commuting}
One has
\[
[V_r,V_s](q)_\alpha=
q_\alpha[\lambda_\alpha^r,\lambda_\alpha^s].
\]
Therefore the fields commute if, for each fixed $\alpha$,
\[
[\lambda_\alpha^r,\lambda_\alpha^s]=0
\]
for all $r,s$.
\end{proposition}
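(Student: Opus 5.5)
The plan is a direct computation, one quaternionic coordinate at a time, using the coordinate formula for the Lie bracket of vector fields on the real vector space $\Hh^n\simeq\Rr^{4n}$. With the convention used implicitly in Proposition~\ref{prop:real-commuting}, this formula is
\[
[X,Y](q)=DY_q\bigl(X(q)\bigr)-DX_q\bigl(Y(q)\bigr).
\]
Each $V_r$ acts diagonally: its $\alpha$th component depends only on $q_\alpha$. So the bracket also splits coordinatewise, and it suffices to treat a single coordinate $\alpha$.

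\emph{Step 1: the differential.} Fix $\alpha$. The component $q_\alpha\mapsto q_\alpha\lambda_\alpha^r$ is real-linear, since it is right multiplication $R_{\lambda_\alpha^r}$ by a fixed quaternion. Its differential at any point is therefore $h\mapsto h\lambda_\alpha^r$, independent of $q$.

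\emph{Step 2: substitute.} Inserting $X=V_r$ and $Y=V_s$ into the bracket formula gives
\[
[V_r,V_s](q)_\alpha=(q_\alpha\lambda_\alpha^r)\lambda_\alpha^s-(q_\alpha\lambda_\alpha^s)\lambda_\alpha^r
=q_\alpha\bigl(\lambda_\alpha^r\lambda_\alpha^s-\lambda_\alpha^s\lambda_\alpha^r\bigr),
\]
using only associativity of $\Hh$. The right-hand side is $q_\alpha[\lambda_\alpha^r,\lambda_\alpha^s]$, as claimed.

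\emph{Step 3: the consequence.} If $[\lambda_\alpha^r,\lambda_\alpha^s]=0$ for each fixed $\alpha$ and all $r,s$, then every component of $[V_r,V_s]$ vanishes identically, so the fields commute.

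The only delicate point is the order of the factors. Because $R_aR_b=R_{ba}$, the composition of right multiplications reverses the order of the quaternions. Combined with the choice of sign convention for the bracket of linear fields ($[X_A,X_B]$ equal to $X_{BA-AB}$ or its negative), this could flip the sign of the commutator. I will fix the convention above, which is the standard one for vector fields as derivations, and check the computation against it.

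Only the sign of the displayed formula depends on this convention. The commutation criterion does not, since vanishing of $[\lambda_\alpha^r,\lambda_\alpha^s]$ is symmetric in $r$ and $s$.
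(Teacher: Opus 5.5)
Your computation is correct. With the standard convention $[X,Y](q)=DY_q(X(q))-DX_q(Y(q))$, the differential of right multiplication by $\lambda_\alpha^s$ together with associativity of $\Hh$ gives exactly $q_\alpha[\lambda_\alpha^r,\lambda_\alpha^s]$ with the stated sign, and the commutation criterion follows. The paper states this proposition without a written proof, and your coordinatewise calculation is the direct verification it leaves implicit (the same computation the paper sketches for $[X_a,X_b]$ in the linear Frobenius rigidity theorem).
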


For each $\alpha$, a family of commuting nonreal quaternions lies in a complex
slice
\[
\Cc_{I_\alpha}
=
\{a+bI_\alpha:a,b\in\Rr\}\subset\Hh.
\]
The slice is allowed to depend on $\alpha$.  In that case the full
configuration may still span $\Hh^m$ over $\Rr$.

The side of multiplication is essential.  If one uses
\[
V_r(q)_\alpha=\lambda_\alpha^r q_\alpha,
\]
the bracket contains terms
\[
\lambda_\alpha^r q_\alpha\lambda_\alpha^s
-
\lambda_\alpha^s q_\alpha\lambda_\alpha^r,
\]
which do not vanish merely because the coefficients commute.

\subsection{One fixed complex slice}

Assume there is a single unit imaginary quaternion $I$ such that all
coefficients lie in $\Cc_I$.  Then
\[
\Lambda\subset\Cc_I^m,
\]
so its real rank is at most $2m$.  This is therefore a rank-degenerate limit
of the full quaternionic situation.

Choose $J\perp I$ and write
\[
q_\alpha=z_\alpha+Jw_\alpha,
\qquad
z_\alpha,w_\alpha\in\Cc_I.
\]
Since
\[
|q_\alpha|^2=|z_\alpha|^2+|w_\alpha|^2,
\]
the defining equations become the ordinary complex equations for the
duplicated configuration
\[
\widetilde\Lambda=
(\lambda_1,\lambda_1,\ldots,\lambda_n,\lambda_n).
\]

\begin{proposition}\label{prop:fixedslice}
In the fixed-slice case,
\[
Z_{\Hh}(\Lambda)\cong Z_{\Cc}(\widetilde\Lambda).
\]
\end{proposition}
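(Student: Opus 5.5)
The identification is induced by the real-linear isomorphism
\[
\Psi:\Hh^n\longrightarrow\Cc_I^{2n},\qquad
(q_1,\ldots,q_n)\longmapsto(z_1,w_1,\ldots,z_n,w_n),
\qquad q_\alpha=z_\alpha+Jw_\alpha ,
\]
where $z_\alpha,w_\alpha\in\Cc_I$ and $J\perp I$ is a fixed unit imaginary quaternion. The plan has three steps.

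First, I check that $\Psi$ is a real isometry. Since $J$ anticommutes with $I$, the element $Jw_\alpha$ lies in $J\Cc_I$. Moreover $\Hh=\Cc_I\oplus J\Cc_I$ is a real orthogonal decomposition. Hence
\[
|q_\alpha|^2=|z_\alpha|^2+|w_\alpha|^2,
\]
which is the identity already displayed before the proposition. So $\Psi$ is a linear isometry $\Rr^{4n}\to\Rr^{4n}$.

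Second, I substitute this identity into the defining equations of $Z_{\Hh}(\Lambda)$. The equation $\sum_\alpha\lambda_\alpha|q_\alpha|^2=0$ becomes
\[
\sum_\alpha\lambda_\alpha|z_\alpha|^2+\sum_\alpha\lambda_\alpha|w_\alpha|^2=0,
\]
and the sphere equation becomes $\sum_\alpha\bigl(|z_\alpha|^2+|w_\alpha|^2\bigr)=1$. These are exactly the defining equations of the complex intersection of quadrics $Z_{\Cc}(\widetilde\Lambda)\subset\Cc^{2n}$ for the duplicated configuration $\widetilde\Lambda$. The coefficients $\lambda_\alpha$ lie in $\Cc_I^m\cong\Cc^m\cong\Rr^{2m}$, and both sides use the same real linear equations on the squared moduli. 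Therefore $\Psi$ restricts to a bijection
\[
Z_{\Hh}(\Lambda)\longrightarrow Z_{\Cc}(\widetilde\Lambda).
\]
Since $\Psi$ is the restriction of a linear diffeomorphism of the ambient spaces, this bijection is a diffeomorphism of subsets. It is a diffeomorphism of manifolds as soon as either side is smooth.

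Third, I settle the only delicate point, which is the admissibility and rank framework. Here $\Lambda$ has real rank at most $2m$, so Theorem~\ref{thm:smooth} with $p=4m$ does not apply directly. Instead I interpret admissibility with $p$ replaced by the actual affine dimension $2m$ of $\Cc_I^m$. I then verify that $\widetilde\Lambda$ satisfies the Siegel and weak hyperbolicity conditions in $\Cc^m$.
\begin{itemize}[label=--]
\item The Siegel condition for $\widetilde\Lambda$ is clear, since $\widetilde\Lambda$ has the same convex hull as $\Lambda$.
\item For weak hyperbolicity, a subfamily of $\widetilde\Lambda$ whose convex hull contains $0$ projects to a subfamily of $\Lambda$ with at most as many distinct indices and the same convex hull. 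Duplication only increases cardinalities, so weak hyperbolicity passes from $\Lambda$ to $\widetilde\Lambda$.
\end{itemize}
Note that $\widetilde\Lambda$ has repeated points, so it is not a generic LVM configuration. I do not need genericity: the regularity argument of Theorem~\ref{thm:smooth} works verbatim on the complex side. Alternatively, I transport smoothness through $\Psi$.

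The conclusion is an equality of sets up to the linear map $\Psi$, and this is what the proposition asserts. I expect no real obstacle beyond this bookkeeping about which rank $p$ the admissibility hypotheses refer to.
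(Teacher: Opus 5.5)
Your proposal is correct and follows the paper's own argument, which likewise writes $q_\alpha=z_\alpha+Jw_\alpha$ and substitutes $|q_\alpha|^2=|z_\alpha|^2+|w_\alpha|^2$ into the defining equations to obtain exactly those of $Z_{\Cc}(\widetilde\Lambda)$. Your third step reinterprets admissibility with $p=2m$, which is genuinely needed because a fixed-slice configuration can never satisfy weak hyperbolicity with $p=4m$, and it checks that weak hyperbolicity passes to the duplicated configuration; this is correct bookkeeping that the paper leaves implicit.
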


Thus the classical LVM geometry occurs as a concrete limiting case, although
the rank and therefore the dimension of the intersection change.

\section{Wall-crossing}

The wall-crossing argument for intersections of quadrics is real convex
geometry.  It therefore applies to configurations in
\[
\Hh^m\simeq\Rr^{4m}.
\]

Let $\Lambda(t)$ be a one-parameter family which is admissible except at one
value $t_0$, where the origin crosses a single wall affinely generated by
$4m$ vectors.  Suppose the remaining vectors divide into $a$ on one side and
$b$ on the other.  Then
\[
a+b=n-4m=d+1.
\]
A flip of type $(a,b)$ means that, in the simplicial complex dual to the
simple polytope, the subcomplex
\[
\Delta^{a-1}*\partial\Delta^{b-1}
\]
is replaced by
\[
\partial\Delta^{a-1}*\Delta^{b-1},
\]
where $*$ denotes the simplicial join.

\begin{theorem}\label{thm:wall}
Across a generic wall-crossing of type $(a,b)$, the associated simple
polytope changes by a flip of type $(a,b)$.
\end{theorem}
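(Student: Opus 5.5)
The plan is to work entirely with the combinatorial description of $K_{P_\Lambda}$ from Proposition~\ref{prop:faces}. A subset $J\subset[n]$ is a simplex of $K_P$ exactly when $0\in\relint\conv\{\lambda_i:i\notin J\}$. Under admissibility this is equivalent to $0\in\operatorname{int}\conv\{\lambda_i:i\notin J\}$, by Lemma~\ref{lem:interior}. Passing to complements, the data is the family of \emph{cofaces}
\[
\mathcal C(\Lambda)=\{I\subset[n]:0\in\operatorname{int}\conv\{\lambda_i:i\in I\}\}.
\]
Since everything is real convex geometry in $\Rr^{4m}$, the argument is the Bosio--Meersseman one \cite{BM} with $2m$ replaced by $p=4m$. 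Only membership in $\mathcal C(\Lambda(t))$ can change with $t$.

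The first step is to localize the change. For $t\neq t_0$ near $t_0$, the configuration is admissible by hypothesis. For a subset $I$, the condition $0\in\operatorname{int}\conv\lambda_I(t)$ can only switch when $0$ passes through the boundary of $\conv\lambda_I(t)$. This means $0$ lies in the affine span of some $p$ of the vectors. Genericity says the only such event near $t_0$ is the wall $W=\operatorname{aff}\{\lambda_i:i\in S\}$ with $|S|=p$. Moreover $0$ crosses $W$ transversally, landing in the relative interior of the simplex $\conv\lambda_S$; otherwise a smaller subset would already violate weak hyperbolicity at $t_0$. Let $A$ and $B$ be the $a$ and $b$ indices on the two sides of $W$, so that $[n]=S\sqcup A\sqcup B$. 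At $t<t_0$, say, the origin lies on the $A$-side of $W$ near the simplex $\sigma=\conv\lambda_S$.

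The second step is the local criterion. For $I=S'\cup A'\cup B'$ with $S'\subset S$, $A'\subset A$, $B'\subset B$, whether $0\in\operatorname{int}\conv\lambda_I$ changes at $t_0$ is governed by a Radon-type analysis near $\sigma$. The origin sits just off $\sigma$, and $0\in\operatorname{int}\conv\lambda_I$ requires points of $I$ on the side opposite the origin. One checks that:
\begin{enumerate}[label=(\roman*)]
\item membership can change only if $S'=S$;
\item before the crossing, $S\cup A'\cup B'$ is a coface iff $B'\neq\varnothing$ and the rest of the configuration is unaffected;
\item after the crossing, the same set is a coface iff $A'\neq\varnothing$.
\end{enumerate}
More precisely, I would show that $I\in\mathcal C(\Lambda(t))$ for $t<t_0$ but not for $t>t_0$ exactly when $I\supset S$, $I\cap B\neq\varnothing$ and $I\cap A=\varnothing$, and symmetrically. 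The remaining cofaces are stable, because $0$ stays in the interior of a full-dimensional simplex spanned by points of $I$ not all lying on $W$.

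The third step translates back to $K_P$ by taking complements $J=[n]\setminus I$. The simplices that disappear are the $J\supset A$ with $J\cap S=\varnothing$ and $J\not\supset B$, that is, the sets $A\sqcup B''$ with $B''\subsetneq B$. These form $\Delta^{a-1}*\partial\Delta^{b-1}$ on the vertex set $A\sqcup B$. The simplices that appear are $\partial\Delta^{a-1}*\Delta^{b-1}$. This is precisely a flip of type $(a,b)$, and $a+b=n-4m=d+1$ is the count already recorded.

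The main obstacle is the second step: making rigorous that membership changes only for sets containing all of $S$, and that the change depends only on the sides of the remaining points. I would handle it by writing $0$ in barycentric-type coordinates relative to the affinely independent set $\lambda_S\cup\{\lambda_j\}$ and using transversality of the crossing. Genericity also has to exclude degenerate situations, for instance $A$ or $B$ empty, or extra points lying on $W$. Here I would simply take the statement's ``generic'' to include exactly these conditions, as in \cite{BM}.
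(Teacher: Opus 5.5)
Your proof is correct in outline but takes a different route from the paper, and it contains an orientation error in steps (ii)/(iii).

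\textbf{Comparison of routes.} The paper's proof is a one-line appeal to the Bosio--Meersseman Gale-dual argument \cite{BM}: at a generic crossing one affine dependence changes sign, and in the Gale transform the corresponding face is exchanged with its complementary face. You stay in the primal configuration instead. You describe $K_{P_\Lambda}$ through complements of cofaces $\{I:0\in\conv\lambda_I\}$, which by Lemma~\ref{lem:interior} is the same as using the interior. Membership of $I$ can only change at a parameter where $0\in\partial\conv\lambda_I(t)$. Carath\'eodory in a supporting hyperplane, admissibility for $t\neq t_0$, and genericity at $t_0$ then force $t=t_0$ and $S\subseteq I$. For $I\supseteq S$, a half-space argument relative to $W$ decides membership. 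This buys a self-contained proof that lists exactly which simplices are removed and added. Because you work on the full vertex set $[n]$, it also shows the degenerate flips $a=1$ or $b=1$ as the creation or destruction of a ghost vertex (an indispensable coordinate), which the Gale-dual phrasing leaves implicit. The cost is that the local side analysis must be done by hand, and that is where the error sits.

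\textbf{The orientation error.} Suppose the origin lies on the $A$-side of $W(t)$ for $t<t_0$. Then $S\cup A'\cup B'$ is a coface if and only if $A'\neq\varnothing$, not $B'\neq\varnothing$.
\begin{itemize}
\item If $A'=\varnothing$, all of $\lambda_I$ lies in the closed half-space on the $B$-side, which misses the origin.
\item If $\alpha\in A'$, then $\conv(\lambda_S\cup\{\lambda_\alpha\})$ contains the origin in its interior for $t$ near $t_0$. The reason is that the origin tends to the relative interior of its facet $\sigma=\conv\lambda_S$ from $\lambda_\alpha$'s side.
\end{itemize}
So capturing the origin needs points on the same side as the origin, not the opposite side. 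The same convex geometry in $\Rr^1$ shows this: with $\lambda_1=-1$, $\lambda_2=2$, $\lambda_3(t)=-t$, for $t<0$ the set $\{1,3\}$ is a coface and $\{2,3\}$ is not.

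Consequently the simplices that disappear are $B\sqcup A''$ with $A''\subsetneq A$, and those that appear are $A\sqcup B''$ with $B''\subsetneq B$. Thus $\Delta^{b-1}*\partial\Delta^{a-1}$ is replaced by $\partial\Delta^{b-1}*\Delta^{a-1}$, which is the inverse of what you claim. The statement does not say which side carries the $a$ vectors, so the theorem still follows after relabelling. To match the paper's definition of a flip of type $(a,b)$, $a$ must count the vectors on the side into which the origin moves. Your (ii) and (iii), however, are false as written.

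\textbf{Two smaller points.}
\begin{itemize}
\item You do not need to assume $A,B\neq\varnothing$ as part of genericity. It is forced by the Siegel condition for $t\neq t_0$ on both sides of the crossing.
\item The origin lying in the relative interior of $\sigma$ at $t_0$ comes from the genericity of the crossing. It does not come from weak hyperbolicity at $t_0$, which fails there anyway.
\end{itemize}
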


\begin{proof}
This is the standard Gale-duality wall-crossing argument for intersections
of real quadrics; see Bosio--Meersseman \cite{BM}.  At a generic crossing
exactly one affine dependence changes sign.  In the Gale dual this replaces
the corresponding face by its complementary face, which is precisely a
flip of type $(a,b)$.  The argument uses only the oriented real
configuration in $\Rr^{4m}$ and therefore is unchanged by the use of
quaternionic coordinates.
\end{proof}

At the level of quaternionic moment-angle manifolds, the corresponding
topological replacement has the expected form
\[
\Ss^{4a-1}\times(\Ss^{3})^{4m}\times D^{4b}
\quad\longleftrightarrow\quad
D^{4a}\times(\Ss^{3})^{4m}\times \Ss^{4b-1}.
\]
The common boundary is
\[
\Ss^{4a-1}\times(\Ss^{3})^{4m}\times \Ss^{4b-1}.
\]

\begin{remark}
The combinatorial flip is proved above.  A complete smooth equivariant
wall-crossing theorem requires an explicit construction of invariant tubular
neighborhoods and of the gluing map.  We do not include that verification
here.
\end{remark}

\section{Relation with quaternionic toric geometry}

There are two different quaternionic toric frameworks relevant here.

Hopkinson introduced quoric manifolds as quaternionic analogues of
quasitoric manifolds: smooth manifolds with locally standard
$(\Ss^{3})^d$ actions and simple-polytope orbit spaces \cite{Hop}; compare
also Scott's quaternionic toric varieties
\cite{Scott}.  More recently, local quaternionic toric actions have been studied
using characteristic data and degree-four Euler classes.  This is close in
spirit to the topology seen in Theorem~\ref{thm:H4}.

Gentili, Gori and Sarfatti developed a differential-geometric theory based on
$4$-plectic manifolds and generalized Hamiltonian $\Sp(1)^d$ actions
\cite{GGS}; see also Foth \cite{Foth}.  In their
definition a $4$-plectic form is a closed $4$-form $\psi$ for which
\[
v\longmapsto \iota_v\psi
\]
has trivial kernel.  In that theory the tri-moment map is the analogue of the moment map for
the $\Sp(1)^d$ action; its target is a real vector space of dimension
$d$.

These theories involve structures not present automatically on a general
LVMQ quotient.  For the explicit sphere-product family, however, the
ambient Kraines form supplies the required $4$-plectic structure.

\subsection{A quaternionic toric sphere-product family}

Let $\Omega_{\mathrm K}$ denote the standard Kraines $4$-form on a
quaternionic Hermitian vector space.

\begin{lemma}\label{lem:kraines-imaginary}
For $s\ge2$, the restriction of $\Omega_{\mathrm K}$ to
\[
(\operatorname{Im}\Hh)^s\subset\Hh^s
\]
is nondegenerate in the $4$-plectic sense.  More generally, for every
$t\ge0$ its restriction to
\[
(\operatorname{Im}\Hh)^s\oplus\Hh^t
\]
is nondegenerate.
\end{lemma}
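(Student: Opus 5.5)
The plan is to verify $4$-plecticity directly: for each nonzero tangent vector $v$ of $W=(\operatorname{Im}\Hh)^s\oplus\Hh^t$, we exhibit $w_1,w_2,w_3\in W$ with $\Omega_{\mathrm K}(v,w_1,w_2,w_3)\neq0$. Closedness is automatic, since $\Omega_{\mathrm K}$ has constant coefficients on a linear subspace. Throughout, index the quaternionic coordinates of $\Hh^{s+t}$ by $a$.

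\textbf{Setup.} Write the Kraines form through the three Hermitian $2$-forms $\omega_L$, $L\in\{I,J,K\}$, for the quaternionic structure (right multiplication in the paper's conventions):
\[
\omega_L(x,y)=\operatorname{Re}\sum_a x_a L\,\overline{y_a},
\qquad
\Omega_{\mathrm K}=\sum_L\omega_L\wedge\omega_L .
\]
Each $\omega_L=\sum_a\omega_L^{(a)}$ splits coordinatewise. Hence
\[
\Omega_{\mathrm K}=\sum_a\Omega^{(a)}+2\sum_{a<b}\sum_L\omega_L^{(a)}\wedge\omega_L^{(b)},
\]
where $\Omega^{(a)}$ is a nonzero multiple of the volume form of the $a$-th copy of $\Hh$. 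The key bookkeeping fact is this: a term $\omega_L^{(a)}\wedge\omega_L^{(b)}$ (respectively $\Omega^{(a)}$) vanishes on a $4$-tuple unless at least two (respectively four) of the vectors have nonzero components in slot $a$, and likewise for slot $b$.

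\textbf{Case 1: $v$ has a nonzero component $v_a$ in an $\Hh^t$ slot.} Take $w_1,w_2,w_3$ supported in slot $a$, equal to $v_aI,\,v_aJ,\,v_aK$. These lie in $W$ because slot $a$ is a full copy of $\Hh$. Every cross term needs two vectors with a component in some slot $b\neq a$, but only $v$ can have one, so all cross terms vanish. Only $\Omega^{(a)}$ survives, and it evaluates on the real frame $v_a,v_aI,v_aJ,v_aK$ to a nonzero multiple of $|v_a|^4$.

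\textbf{Case 2: $v$ lies in $(\operatorname{Im}\Hh)^s$.} Here $s\ge2$ is used. Choose a slot $a$ with $v_a\neq0$ imaginary and a second imaginary slot $b\neq a$. Let $w_1$ be an imaginary vector supported in slot $a$, and let $w_2,w_3$ be imaginary vectors supported in slot $b$. By the bookkeeping fact, every term except $\omega_L^{(a)}\wedge\omega_L^{(b)}$ vanishes. Within that term, only the pairing of $(v,w_1)$ in slot $a$ with $(w_2,w_3)$ in slot $b$ survives. So
\[
\Omega_{\mathrm K}(v,w_1,w_2,w_3)=c\sum_L\omega_L^{(a)}(v_a,w_1)\,\omega_L^{(b)}(w_2,w_3)
\]
for a universal nonzero constant $c$.

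Now restrict $\omega_L$ to $\operatorname{Im}\Hh$. It is the area form of the plane $L^\perp$ and its kernel is $\Rr L$. For instance, with $L=I$ and $(w_2,w_3)=(J,K)$ one gets $\omega_I(J,K)=\pm1$, while $\omega_J(J,K)=\omega_K(J,K)=0$. Since $v_a$ is nonzero and imaginary, it is not parallel to all of $I,J,K$. Pick $L$ with $v_a\notin\Rr L$. Take $(w_2,w_3)$ to be an oriented orthonormal basis of $L^\perp$, which isolates the $L$-term, and take $w_1\in L^\perp$ with $\omega_L(v_a,w_1)\neq0$. Then the displayed value is nonzero.

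\textbf{Main obstacle and the case $t=0$.} The only delicate point is the sign and normalization conventions for $\omega_L$ (left versus right structure, and the factor in $\Omega_{\mathrm K}$). I would fix these once and check that the identities $\omega_L(L',L'')=\pm\delta$ used above hold. The case $t=0$ is Case 2 alone, which is exactly where $s\ge2$ is needed: for $s=1$ one has $\Omega_{\mathrm K}|_{\operatorname{Im}\Hh}=0$ for degree reasons.
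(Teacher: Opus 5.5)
Your proposal is correct and follows essentially the paper's route. You split $\Omega_{\mathrm K}=\sum_L\omega_L\wedge\omega_L$ slot by slot and use $s\ge2$ to pair $(v,w_1)$ in one imaginary slot against $(w_2,w_3)$ in another; isolating a single $L$-term is the paper's $\langle x_1\times y_1,z_2\times w_2\rangle\neq0$ in different words, and quaternionic components are handled by the Kraines form on the $\Hh^t$ slots. Your case split by $\Hh^t$-support, the explicit witness $(v_a,v_aI,v_aJ,v_aK)$, and bookkeeping that allows $v$ to have components outside the chosen slots are, if anything, slightly more careful than the paper's version.
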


\begin{proof}
Up to normalization,
\[
\Omega_{\mathrm K}
=
\omega_1\wedge\omega_1+
\omega_2\wedge\omega_2+
\omega_3\wedge\omega_3.
\]
On $\operatorname{Im}\Hh\simeq\Rr^3$, the three numbers
\[
\bigl(\omega_1(x,y),\omega_2(x,y),\omega_3(x,y)\bigr)
\]
are, up to a fixed choice of orientation and signs, the coordinates of
$x\times y$.

Let $0\ne x=(x_1,\ldots,x_s)\in(\operatorname{Im}\Hh)^s$, and after
reordering suppose that $x_1\ne0$.  Choose $y$ supported in the first
summand so that $x_1\times y_1\ne0$.  Since $s\ge2$, choose $z,w$
supported in the second summand so that $z_2\times w_2$ is a nonzero
multiple of $x_1\times y_1$.  The mixed two-form terms vanish because
the two pairs have disjoint supports, and therefore
\[
\Omega_{\mathrm K}(x,y,z,w)
=
2\langle x_1\times y_1,z_2\times w_2\rangle\ne0.
\]
Thus $\iota_x\Omega_{\mathrm K}\ne0$.

For $(\operatorname{Im}\Hh)^s\oplus\Hh^t$, if the imaginary component
of a nonzero vector is nonzero, use the preceding argument with the
other three vectors in $(\operatorname{Im}\Hh)^s$.  If its imaginary
component is zero, use the nondegeneracy of the standard Kraines form
on the quaternionic summand $\Hh^t$.
\end{proof}

\begin{theorem}[Quaternionic toric LVMQ sphere products]
\label{thm:lvmq-quaternionic-toric}
For every $m\ge1$ and $d\ge1$, the LVMQ manifolds of
Theorem~\ref{thm:einstein-family},
\[
N_{\Hh}(\Lambda)
\cong
\Ss^{4d+3}\times(\Ss^3)^{4m-1},
\]
are quaternionic toric manifolds in the sense of
Gentili--Gori--Sarfatti \cite{GGS}.  A $4$-plectic form is given by the
restriction of the Kraines form of $\Hh P^{n-1}$.

The minimal examples
\[
N_{\Hh}(\Lambda)\cong(\Ss^3)^{4m}
\]
have the same property.
\end{theorem}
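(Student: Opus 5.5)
The plan has three steps: identify the manifolds concretely, check that the restricted Kraines form is $4$-plectic, and then check the Hamiltonian toric conditions.

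\textbf{Step 1: concrete model.} First I will make the sphere-product family of Theorem~\ref{thm:einstein-family} explicit. Presumably it is given by a configuration in which $d+1$ coordinates share the weight $\lambda_0$ and the remaining $4m$ coordinates carry the other vertices of a $4m$-simplex containing $0$ in its interior. Then $Z_{\Hh}(\Lambda)\cong \Ss^{4d+3}\times(\Ss^3)^{4m}$, and the diagonal left $\Sp(1)$ can be used to normalize one $\Ss^3$ phase. This gives a slice
\[
N_{\Hh}(\Lambda)\cong \Ss^{4d+3}\times(\Ss^3)^{4m-1}
\]
on which, at a point, the tangent space splits as a tangent space of $\Ss^{4d+3}\subset\Hh^{d+1}$ plus $4m-1$ copies of $T_u\Ss^3=u\cdot\operatorname{Im}\Hh$. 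In the minimal case there is no $\Ss^{4d+3}$ factor and we get $(\Ss^3)^{4m}$.

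\textbf{Step 2: the $4$-plectic form.} Let $\psi=i^\ast\Omega_{\mathrm K}$, where $\Omega_{\mathrm K}$ is the Kraines form of $\Hh P^{n-1}$. It is closed because the Kraines form is closed; the only issue is nondegeneracy. I will compute at a point of $N_{\Hh}(\Lambda)$ using a horizontal lift in $Z_{\Hh}(\Lambda)$, that is, the orthogonal complement of $\operatorname{Im}\Hh\cdot q$, where the Kraines form of $\Hh P^{n-1}$ agrees with the flat $\Sp(n)\Sp(1)$-invariant form. By right multiplication by the unit phases $u_i^{-1}$ coordinatewise (these lie in $\Sp(n)$ and preserve the flat form up to the $\Sp(1)$ ambiguity), each circle tangent $u_i\operatorname{Im}\Hh$ becomes $\operatorname{Im}\Hh$. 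The horizontal tangent space is then identified with
\[
(\operatorname{Im}\Hh)^{s}\oplus \Hh^{t}
\]
up to a complement. Here the $\Hh^t$ part comes from the $\Ss^{4d+3}$ directions modulo the radial line, which is absorbed because the sphere's radial direction is killed by the equations. Lemma~\ref{lem:kraines-imaginary} then gives nondegeneracy, with $s=4m-1\ge 3\ge2$.

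\textbf{Main obstacle.} The hardest step is this identification. The tangent space is not literally $(\operatorname{Im}\Hh)^s\oplus\Hh^t$: the sphere factor contributes $T\Ss^{4d+3}$, which is an $\Hh^{d}\oplus\operatorname{Im}\Hh$-type space, and the horizontal condition mixes coordinates. I would first try to show that the horizontal projection preserves the decomposition enough that each vector with nonzero $(\operatorname{Im}\Hh)$-component can be paired against three vectors supported in a different $\Ss^3$ factor, exactly as in the proof of Lemma~\ref{lem:kraines-imaginary}. Vectors supported purely in the $\Ss^{4d+3}$ directions would be handled by the quaternionic nondegeneracy on $\Hh^d$, together with pairing the remaining $\operatorname{Im}\Hh$ direction against other factors. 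The condition $s\ge2$ is exactly what makes this cross-pairing available.

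\textbf{Step 3: toric conditions.} Finally I will verify the Gentili--Gori--Sarfatti conditions. The coordinatewise $\Sp(1)^n$ action descends by Proposition~\ref{prop:isotropy}, and after removing the diagonal it leaves an effective $\Sp(1)^{\delta}$ with $\delta$ matching half the dimension count in their definition. This action preserves $\psi$ because right phase multiplications are quaternionic unitary. The tri-moment map is built from the norms $|q_i|^2$ restricted to faces, with image related to $P_\Lambda$ via Theorem~\ref{thm:polytope}. The generalized Hamiltonian condition $d\langle\mu,\cdot\rangle=\iota\psi$ then follows from the flat-space formula for Kraines reduction.
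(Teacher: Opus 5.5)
There is a genuine gap at the step you flag as the main obstacle, and the repair you sketch does not work as stated. First move to a convenient point: the first $4m$ coordinates equal $c=(4m+1)^{-1/2}$ and the last block is $(0,\ldots,0,c)$. Getting there, and later propagating the result to all points, needs the group $\Sp(1)^{4m}\times\Sp(d+1)$. It acts transitively, commutes with the diagonal left action and preserves the Kraines form; phase normalization of the $\Ss^3$ factors alone is not enough. At this point $T_qZ_{\Hh}(\Lambda)=(\operatorname{Im}\Hh)^{4m+1}\oplus\Hh^d$, where the extra imaginary slot comes from $T\Ss^{4d+3}$. The vertical space is the diagonal copy $\operatorname{Im}\Hh\otimes(1,\ldots,1)$ in these $4m+1$ slots, so the horizontal space is $\{(h_1,\ldots,h_{4m+1})\in(\operatorname{Im}\Hh)^{4m+1}:\sum_k h_k=0\}\oplus\Hh^d$. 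No nonzero horizontal vector is supported in a single slot. The horizontal lift of a vector $h$ tangent to one $\Ss^3$ factor of your gauge slice is $h\otimes\bigl(e_k-\tfrac1{4m+1}(1,\ldots,1)\bigr)$. For two such lifts the mixed terms $\omega_\alpha(y,z)=-\tfrac1{4m+1}\omega_\alpha(h,h')$ do not vanish, so the argument of Lemma~\ref{lem:kraines-imaginary} cannot be run ``exactly'' on them. The missing idea in the paper is a real orthogonal matrix in $O(4m+1)$ acting on these slots and sending the diagonal to the last axis. Having real entries, it commutes with left quaternionic multiplication, so it preserves the flat Kraines form and $(\operatorname{Im}\Hh)^{4m+1}$. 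It therefore identifies the horizontal space with $(\operatorname{Im}\Hh)^{4m}\oplus\Hh^d$, where Lemma~\ref{lem:kraines-imaginary} applies with $s=4m$. (Closer to your idea, you could instead take a horizontal $x$ with $x_k\neq x_l$ and pair it against $a\otimes(e_k-e_l)$ together with $b\otimes(e_s-e_t)$ and $b'\otimes(e_s-e_t)$ on two further slots; these exist because $4m+1\ge5$.)

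Step 3 also has to be redone. The diagonal left $\Sp(1)$ is not a subgroup of the right coordinatewise $\Sp(1)^n$. By Proposition~\ref{prop:isotropy} the generic isotropy is the $q$-dependent group $\{(q_i^{-1}aq_i)_i\}$, so ``removing the diagonal'' does not produce the acting group. Moreover the required rank is $r=\frac14\dim N_{\Hh}(\Lambda)=3m+d=n-(m+1)$, so $m+1$ coordinate factors must be discarded, not one. The paper selects $r$ coordinate factors and proves effectiveness by noting that a nonzero unselected coordinate forces $a=1$ in $aq_i=q_iu_i$. Your generalized Hamiltonian condition is asserted rather than proved. The relevant one-forms are the triple contractions $\iota_{X_3}\iota_{X_2}\iota_{X_1}\psi$, and $\psi$ is the restriction of the Kraines form of $\Hh P^{n-1}$, not of the flat form, so no flat-space formula applies directly. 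The paper argues instead that $X_1\wedge X_2\wedge X_3$ lies in the Lie kernel, since each bracket is a multiple of the third generator. Hence the triple contraction of the invariant closed form $\psi$ is closed, and it is exact because $H^1(N_{\Hh}(\Lambda);\Rr)=0$ by $2$-connectivity. Finally, do not identify the tri-moment image with $P_\Lambda$: as the paper remarks, $\dim P_\Lambda=d$ while $r=d+3m$.
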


\begin{proof}
Use the explicit configuration of Lemma~\ref{lem:simplex-realization}
and put $p=4m$, so that $n=p+d+1$.  At a convenient point of
$Z_{\Hh}(\Lambda)$, the first $p$ coordinates are positive real numbers
of fixed modulus, while in the last block only one coordinate is
nonzero.  The first $p$ fixed-radius factors contribute
$(\operatorname{Im}\Hh)^p$, and the tangent space to the sphere in the
last block contributes
\[
\operatorname{Im}\Hh\oplus\Hh^d.
\]
The vertical tangent space of the diagonal left $\Sp(1)$ action is one
weighted diagonal copy of $\operatorname{Im}\Hh$.  A real orthogonal
change among these $p+1$ quaternionic coordinate slots sends that
weighted diagonal to the last coordinate axis; because the coefficients
are real, it is quaternionic linear and preserves $\Omega_{\mathrm K}$.
Hence the horizontal tangent space of the quotient is equivalent to
\[
(\operatorname{Im}\Hh)^p\oplus\Hh^d
=
(\operatorname{Im}\Hh)^{4m}\oplus\Hh^d.
\]
By Lemma~\ref{lem:kraines-imaginary}, the restriction of the Kraines
form has trivial contraction kernel there.

The group
\[
\Sp(1)^p\times\Sp(d+1)
\]
acts transitively on the corresponding product model of
$Z_{\Hh}(\Lambda)$, preserves the defining equations, commutes with the
diagonal left $\Sp(1)$ action, and preserves the ambient quaternionic
projective metric and its Kraines form.  The preceding calculation
therefore holds at every point of $N_{\Hh}(\Lambda)$.  Since the
Kraines form is closed, its restriction is $4$-plectic.

Put
\[
r=\frac14\dim N_{\Hh}(\Lambda)=d+3m.
\]
Choose $r$ coordinate factors of the right $\Sp(1)^n$ action and denote
their product by $G\cong\Sp(1)^r$.  There remain
\[
n-r=m+1
\]
unselected coordinates.  On the open dense stratum where all
coordinates are nonzero, suppose that $u\in G$ fixes $[q]$.  Then there
is $a\in\Sp(1)$ such that
\[
a q_i=q_i u_i
\]
on selected coordinates and $a q_j=q_j$ on unselected coordinates.
Since an unselected coordinate is nonzero, $a=1$, and then all
$u_i=1$.  Thus the principal isotropy is trivial, and the action is
effective.

The restricted Kraines form is invariant under $G$.  For an
$\Sp(1)$ factor with infinitesimal generators $X_1,X_2,X_3$, the
one-form
\[
\iota_{X_3}\iota_{X_2}\iota_{X_1}\Omega
\]
is closed.  Indeed, the $3$-vector
$X_1\wedge X_2\wedge X_3$ lies in the Lie kernel because each bracket
is a multiple of the remaining generator.  Since every LVMQ manifold
is $2$-connected, $H^1(N_{\Hh};\Rr)=0$.  Hence these closed one-forms
are exact, and their primitives give the tri-moment map.  This is the
generalized Hamiltonian condition used in \cite{GGS}.  Therefore
$N_{\Hh}(\Lambda)$ is quaternionic toric.

In the minimal case the same tangent-space calculation leaves
$(\operatorname{Im}\Hh)^{4m}$ after quotienting the diagonal
$\operatorname{Im}\Hh$.  Lemma~\ref{lem:kraines-imaginary} again gives
$4$-plectic nondegeneracy, and the same symmetry and tri-moment
argument applies.
\end{proof}

\begin{remark}
The polytope $P_\Lambda$ is not the tri-moment polytope of this
quaternionic toric action.  Indeed,
\[
\dim P_\Lambda=n-4m-1,
\qquad
\frac14\dim N_{\Hh}(\Lambda)=n-m-1,
\]
and these dimensions differ for $m\ge1$.  Thus the LVM orbit polytope
and the polyhedral data of the quaternionic toric structure play
different roles.
\end{remark}

\subsection{Why the integer-matrix construction fails}

For ordinary tori, an integer matrix determines a homomorphism
\[
(\Ss^{1})^r\longrightarrow(\Ss^{1})^n.
\]
There is no analogous statement for products of $\Sp(1)$.  At Lie algebra
level, $\mathfrak{sp}(1)$ is simple, and several source factors cannot be
combined arbitrarily into one target factor.

Consequently the real Gale matrix attached to $P_\Lambda$ does not, by
itself, define a subgroup
\[
\Sp(1)^r\subset\Sp(1)^n.
\]
This is the basic obstruction to a literal quaternionic copy of the Delzant
kernel construction.

\subsection{The degree-four class}

The appearance of
\[
e\in H^4(N_{\Hh};\Zz)
\]
is therefore significant.  Degree-four Euler data also appear naturally in
the topology of local quaternionic toric actions.  A comparison with quoric or $4$-plectic quotients therefore requires
additional characteristic data attached to the facets of $P_\Lambda$;
the real configuration $\Lambda$ alone does not define a quaternionic
lattice subgroup.

\begin{problem}
Determine which general admissible configurations $\Lambda$, beyond the
sphere-product family of Theorem~\ref{thm:lvmq-quaternionic-toric}, admit
a quaternionic toric structure compatible with a subgroup of the
coordinatewise $\Sp(1)^n$ action, and describe the corresponding
tri-moment data.
\end{problem}

\begin{problem}
Relate the Euler class
\[
e=i^\ast u
\]
of the diagonal Hopf bundle to the Euler class occurring in the
classification of local quaternionic toric actions.
\end{problem}

\section{The real Poincar\'e domain and trace foliations on the sphere}
\label{sec:poincare-quaternionic}

The preceding sections concern the Siegel regime
\[
0\in\conv\{\lambda_1,\ldots,\lambda_n\},
\]
which is the regime in which the LVMQ compact quotient appears.
There is a complementary construction when the same quaternionic
configuration lies in the Poincar\'e domain.  The point is that the
argument remains entirely real.  The configuration is still regarded
as a family of vectors in
\[
\Hh^m\simeq\Rr^{4m},
\]
and the action is again the action by positive real coordinate
dilations.  What changes is the convex position of the origin.

This section is directly motivated by the higher-rank Poincar\'e theory
of Arroyo, Cabrera, Seade and Verjovsky \cite{ACSV}; in rank one its
historical background includes Guckenheimer's theorem on complex flows in
the Poincar\'e domain \cite{Guck}.  Their construction provides the model for the conical description used
here.  The first part of this section concerns the canonical
\emph{real dilation action} attached to a quaternionic configuration.
It is an abelian Lie-group action.  The rank-one distribution $X\Hh$
associated with a single quaternionic vector field is considered later
and has a separate Frobenius condition.  In the complex case, periods of
the exponential map can affect the topology of the trace leaves.  For the
real dilation action there are no nonzero periods.  The resulting trace
foliation retains the coordinate stratification and conical
decomposition but has Euclidean leaves.

\subsection{The real action and its coordinate stratification}

Put
\[
p=4m
\]
and consider, for $T\in\Rr^p$, the action
\begin{equation}\label{eq:poincare-action}
\Phi_T(q_1,\ldots,q_n)
=
\left(
e^{\langle\lambda_1,T\rangle}q_1,\ldots,
e^{\langle\lambda_n,T\rangle}q_n
\right).
\end{equation}
The factors are positive real numbers, so this action preserves the
quaternionic direction of every nonzero coordinate.

For $q\in\Hh^n$ let
\[
I(q)=\{j:q_j\neq0\}
\]
be its support.  For $I\subset[n]$ define
\[
E_I=
\{q\in\Hh^n:q_j=0\ \hbox{for }j\notin I\},
\qquad
E_I^\ast=
\{q\in E_I:q_j\neq0\ \hbox{for }j\in I\}.
\]
Then
\[
\Hh^n=\bigsqcup_{I\subset[n]}E_I^\ast,
\]
and each stratum is invariant under~\eqref{eq:poincare-action}.

For a subset $I$ put
\[
q(I)=\rank_{\Rr}\{\lambda_i:i\in I\},
\qquad
K_I=
\{T\in\Rr^p:
\langle\lambda_i,T\rangle=0\text{ for every }i\in I\}.
\]

\begin{proposition}\label{prop:poincare-orbits}
Let $q\in E_I^\ast$.  The isotropy subgroup of $q$ for the action
\eqref{eq:poincare-action} is the vector subspace $K_I$.  Consequently
the ambient orbit through $q$ is naturally diffeomorphic to
\[
\Rr^p/K_I\cong\Rr^{q(I)}.
\]
In particular its dimension depends only on the support.
\end{proposition}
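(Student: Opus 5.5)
The plan is to compute the isotropy directly from the formula \eqref{eq:poincare-action} and then apply the orbit--stabilizer identification for a Lie group action.

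\emph{Step 1: the stabilizer equals $K_I$.}  Let $q\in E_I^\ast$ and $T\in\Rr^p$.  The condition $\Phi_T(q)=q$ means
\[
e^{\langle\lambda_i,T\rangle}q_i=q_i\quad\text{for every } i .
\]
For $j\notin I$ this holds automatically, since $q_j=0$.  For $i\in I$ we have $q_i\neq0$, so it is equivalent to $e^{\langle\lambda_i,T\rangle}=1$.  Because the exponent is real, this forces $\langle\lambda_i,T\rangle=0$.  Hence the stabilizer is exactly $K_I$, a linear subspace.  The key point is that the real exponential is injective; in the complex setting one would instead obtain a lattice of periods.

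\emph{Step 2: the orbit is an embedded copy of $\Rr^p/K_I$.}  The orbit map $T\mapsto\Phi_T(q)$ factors through an injective immersion $\Rr^p/K_I\to\Hh^n$.  It is an immersion because its differential at $T$ sends $X$ to $\bigl(\langle\lambda_i,X\rangle e^{\langle\lambda_i,T\rangle}q_i\bigr)_i$, and this vanishes exactly when $X\in K_I$.  To show the map is a diffeomorphism onto its image, which is therefore an embedded submanifold, I would use the following observation.  On $E_I^\ast$, write $q_i=|q_i|\,\hat q_i$ with $\hat q_i=q_i/|q_i|$.  The directions $\hat q_i$ are preserved by the action, and the moduli satisfy
\[
\log|\Phi_T(q)_i|=\log|q_i|+\langle\lambda_i,T\rangle .
\]
So the orbit is the image of the affine subspace
\[
\log|q_I|+\bigl\{(\langle\lambda_i,T\rangle)_{i\in I}:T\in\Rr^p\bigr\}\subset\Rr^I
\]
under the smooth embedding $\Rr^I\to E_I^\ast$, $x\mapsto(e^{x_i}\hat q_i)_{i\in I}$.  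That affine subspace is closed and embedded, and the linear map $T\mapsto(\langle\lambda_i,T\rangle)_{i\in I}$ induces a linear isomorphism from $\Rr^p/K_I$ onto it.  This gives the natural diffeomorphism.

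\emph{Step 3: dimension count.}  The subspace $K_I$ is the annihilator of $\operatorname{span}\{\lambda_i:i\in I\}$, so $\dim K_I=p-q(I)$.  Therefore $\Rr^p/K_I\cong\Rr^{q(I)}$, and this quantity depends only on $I$.

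The only step requiring care is Step 2, namely that the orbit is embedded rather than merely injectively immersed.  The logarithmic-coordinates argument handles it directly, so I do not expect a real obstacle.
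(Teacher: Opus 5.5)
Your proof is correct and follows the same route as the paper. Step~1 is essentially the paper's argument verbatim (injectivity of the real exponential on the support), and your identification of the orbit with $\Rr^p/K_I\cong\Rr^{q(I)}$ is what the paper summarizes as ``the rank theorem,'' while your Step~2 goes a bit further: using the logarithmic radial coordinates that the paper only introduces later (Theorem~\ref{thm:q-stratum-quotient}, Proposition~\ref{prop:no-recurrence}), it shows the orbit is a closed embedded submanifold of $E_I^\ast$.
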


\begin{proof}
Since $q_i\neq0$ for $i\in I$, the equality
$\Phi_T(q)=q$ is equivalent to
\[
e^{\langle\lambda_i,T\rangle}=1
\qquad(i\in I).
\]
The exponent is real, so this is equivalent to
\[
\langle\lambda_i,T\rangle=0
\qquad(i\in I).
\]
Thus the isotropy is exactly $K_I$.  The remaining statements follow
from the rank theorem.
\end{proof}

\begin{remark}\label{rem:no-arithmetic}
There are no nonzero real periods.  Hence there is no discrete lattice
part in the effective isotropy and no arithmetic change of the
diffeomorphism type of the leaves.  Every effective ambient leaf is a
Euclidean space.
\end{remark}

The orbit decomposition is a Stefan--Sussmann singular foliation: it is
the partition into immersed orbits of the family of commuting complete
vector fields
\[
X_T(q)
=
\left(
\langle\lambda_1,T\rangle q_1,\ldots,
\langle\lambda_n,T\rangle q_n
\right),
\qquad T\in\Rr^p.
\]
On every coordinate stratum its rank is constant.

\subsection{The real convex Poincar\'e condition}

In this subsection the words \emph{Poincar\'e condition} have a purely
real convex-geometric meaning.  They refer to the configuration
$\Lambda\subset\Hh^m\simeq\Rr^{4m}$ and to the canonical real dilation
action; they do not assert integrability of any quaternionic four-plane
distribution.

\begin{definition}
The quaternionic configuration $\Lambda$ lies in the
\emph{Poincar\'e domain} if
\[
0\notin\conv\{\lambda_1,\ldots,\lambda_n\}
\subset\Rr^{4m}.
\]
\end{definition}

By strict separation of a compact convex set from the origin, the
Poincar\'e condition is equivalent to the existence of a vector
$\xi\in\Rr^p$ such that
\begin{equation}\label{eq:positive-xi}
\langle\lambda_j,\xi\rangle>0,
\qquad j=1,\ldots,n.
\end{equation}
Denote the open cone of all such vectors by
\[
C_\Lambda
=
\{\xi\in\Rr^p:
\langle\lambda_j,\xi\rangle>0\text{ for every }j\}.
\]

For $r>0$ write
\[
\Ss_r^{4n-1}
=
\{q\in\Hh^n:\|q\|=r\}.
\]

\begin{theorem}[Conical model]\label{thm:q-cone}
Assume that $\Lambda$ lies in the Poincar\'e domain and fix
$\xi\in C_\Lambda$.  Then
\[
\Theta_r:\Ss_r^{4n-1}\times\Rr
\longrightarrow
\Hh^n\setminus\{0\},
\qquad
(q,t)\longmapsto\Phi_{t\xi}(q),
\]
is a diffeomorphism.

Moreover, if $\mathcal F_\Lambda$ denotes the ambient orbit foliation,
then $\Theta_r$ identifies every punctured ambient leaf with the
product of its intersection with $\Ss_r^{4n-1}$ and $\Rr$.  Thus
$\mathcal F_\Lambda$ on $\Hh^n\setminus\{0\}$ is the open cone over its
trace foliation on the sphere.
\end{theorem}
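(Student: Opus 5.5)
The plan is to study the norm along the one-parameter subgroup $\Rr\xi$ and show it is a strictly monotone, proper function of $t$. For $q\neq0$ put
\[
g_q(t)=\|\Phi_{t\xi}(q)\|^2=\sum_i|q_i|^2e^{2t\langle\lambda_i,\xi\rangle}.
\]
Let $c=\min_j\langle\lambda_j,\xi\rangle>0$. Then $g_q'(t)\ge 2c\,g_q(t)>0$. Hence $g_q(t)\ge g_q(0)e^{2ct}$ for $t\ge0$ and $g_q(t)\le g_q(0)e^{2ct}$ for $t\le0$. So $g_q$ is a strictly increasing diffeomorphism of $\Rr$ onto $\Rr_{>0}$. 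Consequently every punctured orbit of $\Rr\xi$ meets $\Ss_r^{4n-1}$ in exactly one point. This gives bijectivity of $\Theta_r$, with the evident inverse $p\mapsto(\Phi_{-\tau(p)\xi}(p),\tau(p))$, where $\tau(p)$ is the unique time with $g_{\Phi_{-\tau\xi}(p)}$ hitting $r^2$. Equivalently, $\tau(p)$ is the unique solution of $\|\Phi_{-\tau\xi}(p)\|^2=r^2$.

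Smoothness of $\Theta_r$ is clear, since the action is smooth in $(T,q)$. For smoothness of the inverse, I would apply the implicit function theorem to $G(p,\tau)=\|\Phi_{-\tau\xi}(p)\|^2-r^2$. Its $\tau$-derivative is $-g'<0$ by the estimate above, so it never vanishes. Thus $\tau$ is smooth on $\Hh^n\setminus\{0\}$, and so is the inverse map. Alternatively, one can check that $d\Theta_r$ is an isomorphism: the radial component of $X_\xi$ at a point of the sphere is $\tfrac12 g'>0$, so $X_\xi$ is transverse to each sphere. The only care needed here is that $\Theta_r$ lands in $\Hh^n\setminus\{0\}$, which holds because the dilations are invertible.

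For the foliation statement, the key fact is that the action is abelian (Proposition~\ref{prop:real-commuting}), so $\Phi_{t\xi}$ maps each ambient orbit $L=\Phi_{\Rr^p}(q)$ onto itself. Given $p\in L$, the point $\Theta_r^{-1}(p)=(\Phi_{-\tau\xi}(p),\tau)$ therefore has first component in $L\cap\Ss_r^{4n-1}$. Conversely, $\Theta_r\bigl((L\cap\Ss_r^{4n-1})\times\Rr\bigr)\subset L$. Hence $\Theta_r$ restricts to a bijection $(L\cap\Ss_r^{4n-1})\times\Rr\to L$.

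To make this a genuine product of manifolds, I would show that the trace is a smooth submanifold of $L$. By Proposition~\ref{prop:poincare-orbits}, $L$ is an immersed copy of $\Rr^{q(I)}$ on the stratum $E_I^\ast$. Since $X_\xi$ is tangent to $L$ and transverse to the sphere, $L$ meets $\Ss_r^{4n-1}$ transversally, in its own leaf topology. So $L\cap\Ss_r$ is an embedded hypersurface of $L$ and the restriction is a diffeomorphism. The step I expect to be the main obstacle is this compatibility with the leaf (immersed) topology rather than the subspace topology. I would handle it by working on the orbit parametrization $\Rr^p/K_I\to L$, where the trace is the level set $\{g=r^2\}$ of a function with nonvanishing derivative along the image of $\xi$.

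Finally, since $\Theta_r$ also respects the invariant strata $E_I^\ast$, the punctured singular foliation $\mathcal F_\Lambda$ is the open cone over its trace foliation on $\Ss_r^{4n-1}$.
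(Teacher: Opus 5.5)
Your proposal is correct and follows essentially the same route as the paper: strict monotonicity of $g_q$ along $\Rr\xi$ (your estimate $g_q'\ge 2c\,g_q$ just makes the paper's limit statements explicit), the implicit function theorem for a smooth hitting time, and the leafwise statement because $\Rr\xi$ lies in the acting group. Two harmless remarks: $\Phi_{t\xi}$ preserves each orbit by the group-action property alone, so commutativity is not needed; and your extra care about the leaf topology is a refinement the paper omits, which is in any case moot because in logarithmic coordinates the orbits are closed embedded submanifolds of their strata.
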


\begin{proof}
For $q\neq0$,
\[
g_q(t)=\|\Phi_{t\xi}(q)\|^2
=
\sum_j|q_j|^2
e^{2t\langle\lambda_j,\xi\rangle}.
\]
By~\eqref{eq:positive-xi},
\[
g_q'(t)
=
2\sum_j
|q_j|^2
e^{2t\langle\lambda_j,\xi\rangle}
\langle\lambda_j,\xi\rangle
>0.
\]
Also
\[
\lim_{t\to-\infty}g_q(t)=0,
\qquad
\lim_{t\to+\infty}g_q(t)=+\infty.
\]
Hence every $\Rr\xi$ orbit meets $\Ss_r^{4n-1}$ exactly once.
The hitting time depends smoothly on $q$ by the implicit function
theorem, because $g_q'(t)$ never vanishes.  This gives the
diffeomorphism and its inverse.

Since $\Rr\xi$ is contained in the acting group, the same
decomposition applies leaf by leaf.
\end{proof}

This theorem is the exact Poincar\'e counterpart of the Siegel
transversal theorem.  In the Siegel domain the norm has a unique
minimum on the relevant orbit and the set of minima is the
intersection of quadrics.  In the Poincar\'e domain the norm is
strictly monotone in one group direction and every sphere is a global
section for that direction.

\subsection{The trace action}

Fix $\xi\in C_\Lambda$ and choose a real hyperplane
\[
H_\xi\subset\Rr^p,
\qquad
\Rr^p=H_\xi\oplus\Rr\xi.
\]
For $U\in H_\xi$ and $q\in \Ss_r^{4n-1}$, there is a unique real number
$\tau(U,q)$ such that
\[
\Phi_{U+\tau(U,q)\xi}(q)\in \Ss_r^{4n-1}.
\]
Define
\begin{equation}\label{eq:trace-action}
\Psi_U(q)
=
\Phi_{U+\tau(U,q)\xi}(q).
\end{equation}

\begin{theorem}[Trace action]\label{thm:q-trace-action}
The maps $\Psi_U$, $U\in H_\xi$, define a smooth action of the
additive vector group $H_\xi$ on $\Ss_r^{4n-1}$.  Its orbits are exactly
the connected components of the intersections of the nonzero ambient
orbits with the sphere.

If $q\in E_I^\ast\cap \Ss_r^{4n-1}$, then the trace leaf through $q$ has
dimension
\[
q(I)-1
\]
and is diffeomorphic to
\[
\Rr^{q(I)-1}.
\]
\end{theorem}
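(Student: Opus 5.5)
The plan is to reduce everything to the conical model of Theorem~\ref{thm:q-cone}. Let
\[
\pi_r:\Hh^n\setminus\{0\}\longrightarrow\Ss_r^{4n-1},\qquad \pi_r(q)=\Phi_{t(q)\xi}(q),
\]
be the retraction onto the sphere along the $\Rr\xi$-orbits. Here $t(q)$ is the unique hitting time. By Theorem~\ref{thm:q-cone} this map is smooth: it is the first component of $\Theta_r^{-1}$. It is also invariant under $\Phi_{s\xi}$ for every $s$. With this notation the definition \eqref{eq:trace-action} reads
\[
\Psi_U=\pi_r\circ\Phi_U|_{\Ss_r^{4n-1}}.
\]
Smoothness of $(U,q)\mapsto\Psi_U(q)$ is then immediate. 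Equivalently, $\tau(U,q)=t(\Phi_U(q))$ is smooth by the implicit function theorem, as in the proof of Theorem~\ref{thm:q-cone}.

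\textbf{Step 1 (action property).} We have $\Psi_0=\mathrm{id}$. For the composition law, write $\pi_r(x)=\Phi_{s\xi}(x)$ with $s=t(x)$. Since the action $\Phi$ is abelian (Proposition~\ref{prop:real-commuting}), we get
\[
\Psi_U\Psi_V(q)=\pi_r\Phi_U\Phi_{s\xi}\Phi_V(q)=\pi_r\Phi_{s\xi}\Phi_{U+V}(q)=\pi_r\Phi_{U+V}(q)=\Psi_{U+V}(q).
\]
The third equality uses the $\Rr\xi$-invariance of $\pi_r$.

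\textbf{Step 2 (orbits equal traces).} Let $\mathcal O=\Phi_{\Rr^p}(q)$ be a nonzero ambient orbit with $q\in\Ss_r^{4n-1}$. Since $\Rr^p=H_\xi\oplus\Rr\xi$ and $\mathcal O$ is $\Rr\xi$-invariant, we have
\[
\mathcal O\cap\Ss_r^{4n-1}=\pi_r(\mathcal O)=\pi_r\Phi_{H_\xi}(q)=\Psi_{H_\xi}(q),
\]
because each $\Rr\xi$-orbit meets the sphere exactly once. Hence every intersection of an ambient orbit with the sphere is a single $H_\xi$-orbit. In particular it is connected, so the ``connected components'' in the statement are the whole intersections.

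\textbf{Step 3 (leaf type).} Let $q\in E_I^\ast$, where $I\neq\varnothing$ since $q\neq0$.

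First, the stabilizer. By Proposition~\ref{prop:poincare-orbits}, $\Psi_U(q)=q$ if and only if $U+\tau\xi\in K_I$ for some real $\tau$. We have $\langle\lambda_i,\xi\rangle>0$ for every $i\in I$, so $\xi\notin K_I$ and $K_I\cap\Rr\xi=0$. Therefore the projection of $K_I$ to $H_\xi$ along $\xi$ is injective. The stabilizer is its image, a linear subspace of dimension $p-q(I)$. The orbit is therefore the image of an injective immersion of
\[
H_\xi/\mathrm{proj}(K_I)\cong\Rr^{q(I)-1}.
\]

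Second, the smooth structure. I expect the main subtlety to be showing that this immersed orbit is diffeomorphic to $\Rr^{q(I)-1}$ in its leaf topology, rather than merely bijectively immersed. To handle this, restrict $\Theta_r$ to $(\mathcal O\cap\Ss_r^{4n-1})\times\Rr$. Leafwise, Theorem~\ref{thm:q-cone} gives
\[
\mathcal O\cong(\mathcal O\cap\Ss_r^{4n-1})\times\Rr.
\]
Under the identification $\mathcal O\cong\Rr^p/K_I\cong\Rr^{q(I)}$, the $\Rr$-factor is the line spanned by the image of $\xi$, which is nonzero. So the trace is the quotient of $\Rr^{q(I)}$ by a line acting by translations. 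This gives a diffeomorphism with $\Rr^{q(I)-1}$ compatible with the orbit map of Step 3, and the dimension $q(I)-1$ follows.
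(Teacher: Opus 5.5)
Your proposal is correct and follows essentially the same route as the paper. The action law comes from uniqueness of the radial correction, which your factorization $\Psi_U=\pi_r\circ\Phi_U$ makes explicit. The leaf type comes from the trace isotropy being a linear subspace of $H_\xi$, which you identify concretely as the injective projection of $K_I$ along $\xi$. Your Step~2, showing that each sphere-intersection of a nonzero ambient orbit is a single connected $H_\xi$-orbit, fills in a point the paper's proof leaves implicit.
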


\begin{proof}
Existence and smoothness of $\tau$ follow from the same strict
monotonicity used in Theorem~\ref{thm:q-cone}.  The uniqueness of the
radial correction gives
\[
\Psi_U\circ\Psi_V=\Psi_{U+V},
\]
so this is an action.

The ambient orbit through $q$ is $\Rr^{q(I)}$ by
Proposition~\ref{prop:poincare-orbits}.  The vector field generated by
$\xi$ is everywhere transverse to the sphere, because
\[
\frac{d}{dt}\bigg|_{t=0}
\|\Phi_{t\xi}(q)\|^2
=
2\sum_i|q_i|^2\langle\lambda_i,\xi\rangle>0.
\]
Therefore the intersection with the sphere has codimension one inside
the ambient orbit and dimension $q(I)-1$.

The trace isotropy is a vector subspace of $H_\xi$.  Hence the trace
leaf is a quotient of a real vector space by a vector subspace, and is
therefore diffeomorphic to $\Rr^{q(I)-1}$.
\end{proof}

The infinitesimal generators of the trace action can be written
explicitly.  For $U\in H_\xi$, set
\[
a_U(q)
=
\frac{
\sum_i|q_i|^2\langle\lambda_i,U\rangle
}{
\sum_i|q_i|^2\langle\lambda_i,\xi\rangle
}.
\]
Then the tangent field to the trace action is
\begin{equation}\label{eq:trace-vector-field}
Y_U(q)
=
X_U(q)-a_U(q)X_\xi(q).
\end{equation}
Indeed,
\[
d(\|q\|^2)_q(Y_U(q))=0.
\]
The denominator in $a_U$ is everywhere positive on the sphere.

\begin{corollary}\label{cor:qtrace-singular}
The trace decomposition on $\Ss_r^{4n-1}$ is a
Stefan--Sussmann singular foliation.  On the stratum
$E_I^\ast\cap \Ss_r^{4n-1}$ it is regular of dimension $q(I)-1$.
\end{corollary}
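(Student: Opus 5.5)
The plan is to realize the trace decomposition as the orbit partition of a family of smooth complete vector fields on the sphere, and then read off its rank on each stratum from Theorem~\ref{thm:q-trace-action}. The family will be $\{Y_U:U\in H_\xi\}$ from \eqref{eq:trace-vector-field}. These fields are smooth on $\Ss_r^{4n-1}$, because the denominator $\sum_i|q_i|^2\langle\lambda_i,\xi\rangle$ is strictly positive there. They are tangent to the sphere, since $d(\|q\|^2)(Y_U)=0$. Being smooth vector fields on a compact manifold, they are complete.

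The key step is to identify the flow of $Y_U$ with the one-parameter group $s\mapsto\Psi_{sU}$. I would do this by differentiating \eqref{eq:trace-action}. The first observation is $\tau(0,q)=0$. Next, differentiate the constraint $\|\Phi_{sU+\tau(sU,q)\xi}(q)\|^2=r^2$ at $s=0$; by the implicit function theorem this gives $\partial_s\tau(sU,q)|_{s=0}=-a_U(q)$. Hence $\tfrac{d}{ds}\big|_{s=0}\Psi_{sU}(q)=X_U(q)-a_U(q)X_\xi(q)=Y_U(q)$. Because $\Psi$ is an action of the vector group $H_\xi$ (Theorem~\ref{thm:q-trace-action}), $s\mapsto\Psi_{sU}$ is a one-parameter group. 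Its generator is therefore $Y_U$, so the flows coincide.

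It follows that the orbits of the pseudogroup generated by the flows of the $Y_U$ coincide with the orbits of the $H_\xi$-action. The theorem already identifies these with the connected components of the intersections of ambient orbits with the sphere. The Stefan--Sussmann theorem says that the orbits of a family of smooth complete vector fields are immersed submanifolds forming a singular foliation. Applying it gives the first claim. I expect this identification of the trace action with the flows of the $Y_U$ to be the main point needing care, since it is what links the geometric definition of the trace leaves to the vector-field formalism.

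For regularity on a stratum, I would restrict to $E_I^\ast\cap\Ss_r^{4n-1}$. This set is invariant, because $\Phi$ preserves supports. On it, the span of $\{Y_U(q)\}$ is the tangent space of the trace leaf, which by Theorem~\ref{thm:q-trace-action} has dimension $q(I)-1$ independent of the point. A rank check confirms this directly. The span of $\{X_T(q):T\in\Rr^p\}$ has dimension $q(I)$, by Proposition~\ref{prop:poincare-orbits}. The vector $X_\xi(q)$ is transverse to the sphere, and the projection along it onto $T_q\Ss_r^{4n-1}$ kills exactly $\Rr X_\xi(q)$. This leaves rank $q(I)-1$. Constant rank on the stratum makes the restricted foliation regular there.
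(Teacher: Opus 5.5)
Your proposal is correct and matches the paper's approach: the paper states the corollary without a separate proof, as an immediate consequence of Theorem~\ref{thm:q-trace-action} and the generators $Y_U$ of \eqref{eq:trace-vector-field}, parallel to its earlier Stefan--Sussmann remark for the ambient fields $X_T$. Your implicit differentiation giving $\partial_s\tau(sU,q)|_{s=0}=-a_U(q)$, which shows that $Y_U$ actually generates $s\mapsto\Psi_{sU}$, usefully fills in a step the paper supports only by the tangency check $d(\|q\|^2)(Y_U)=0$.
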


\subsection{The generic rank case}

Assume now that every subconfiguration has maximal possible real rank,
\begin{equation}\label{eq:qgeneric}
q(I)=\min\{|I|,p\},
\qquad I\subset[n].
\end{equation}
This is the direct real maximal-rank condition.

\begin{corollary}\label{cor:qgeneric-trace}
Under~\eqref{eq:qgeneric}, if $|I|=s$, then every trace leaf in
$E_I^\ast\cap \Ss_r^{4n-1}$ is diffeomorphic to
\[
\begin{cases}
\Rr^{s-1},&s\le p,\\[2mm]
\Rr^{p-1},&s\ge p.
\end{cases}
\]
In particular the principal trace leaves are all
$\Rr^{4m-1}$.
\end{corollary}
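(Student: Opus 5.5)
The plan is to read the corollary off Theorem~\ref{thm:q-trace-action}. That theorem says the trace leaf through a point of $E_I^\ast\cap\Ss_r^{4n-1}$ is diffeomorphic to $\Rr^{q(I)-1}$. So the only remaining work is to compute $q(I)$ from the generic rank hypothesis~\eqref{eq:qgeneric}, which gives $q(I)=\min\{s,p\}$ when $|I|=s$.

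First, for $s\le p$ we get $q(I)=s$, so the leaf is $\Rr^{s-1}$. For $s\ge p$ we get $q(I)=p$, so the leaf is $\Rr^{p-1}$. The two formulas agree at $s=p$, so the case split is consistent.

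One degenerate point needs a check. For $s=1$ the leaf is $\Rr^0$, a point. This matches the dynamics: on a single-coordinate stratum the ambient orbit is the ray $\Rr_{>0}q$ by Proposition~\ref{prop:poincare-orbits}. Here the Poincar\'e condition gives $\lambda_i\neq0$, so $q(\{i\})=1$, and the ray meets the sphere exactly once by Theorem~\ref{thm:q-cone}.

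For the principal leaves I take the open dense stratum $I=[n]$, where all coordinates are nonzero. I want to show its rank is maximal, i.e. $s=n\ge p$. In the Poincar\'e domain, admissibility no longer forces $n>p$, so I will use the convention that principal means maximal rank. That rank is $p$ as soon as $n\ge p$, which is the intended situation. Then the principal trace leaves are $\Rr^{p-1}=\Rr^{4m-1}$.

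The main obstacle is ensuring that the diffeomorphism type really depends only on the rank, i.e. that the trace isotropy is a linear subspace with no periods. This is already supplied by Remark~\ref{rem:no-arithmetic} and by the final step of Theorem~\ref{thm:q-trace-action}, so no new argument is needed.
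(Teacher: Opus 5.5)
Your proposal is correct and is essentially the paper's argument: the corollary is read off from Theorem~\ref{thm:q-trace-action} by substituting $q(I)=\min\{s,p\}$ from~\eqref{eq:qgeneric}. For the principal statement you do not need an ad hoc convention, because the standing full-affine-rank assumption (explicit in Theorem~\ref{thm:q-dichotomy}) forces $n\ge p+1$, so $q([n])=p$ and the principal leaves are $\Rr^{p-1}=\Rr^{4m-1}$.
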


Thus the singularity of the foliation is completely controlled by
coordinate support.  There is no analogue here of the arithmetic
threshold which can occur for complex exponential actions.

\subsection{The leaf spaces on the coordinate strata}

The Poincar\'e trace foliation has an especially simple quotient
description.

Fix a support $I$, $|I|=s$.  Every point of $E_I^\ast$ can be written
uniquely as
\[
q_i=e^{\rho_i}u_i,
\qquad
\rho_i\in\Rr,\quad
u_i\in \Ss^{3},
\qquad i\in I.
\]
Hence
\[
E_I^\ast\cong\Rr^s\times(\Ss^{3})^s.
\]
Let
\[
A_I:\Rr^p\longrightarrow\Rr^s,
\qquad
A_I(T)
=
\bigl(
\langle\lambda_i,T\rangle
\bigr)_{i\in I}.
\]
In logarithmic radial coordinates, the action is simply
\[
(\rho,u)\longmapsto(\rho+A_I(T),u).
\]

\begin{theorem}[Stratum leaf space]\label{thm:q-stratum-quotient}
For every support $I$, the ambient orbit space on the coordinate
stratum is naturally a smooth manifold
\[
E_I^\ast/\Rr^p
\cong
\Rr^s/A_I(\Rr^p)\times(\Ss^{3})^s
\cong
\Rr^{s-q(I)}\times(\Ss^{3})^s.
\]
By the conical model, the same space is the leaf space of the trace
foliation on $E_I^\ast\cap \Ss_r^{4n-1}$:
\[
\bigl(E_I^\ast\cap \Ss_r^{4n-1}\bigr)/
\widehat{\mathcal F}_\Lambda
\cong
\Rr^{s-q(I)}\times(\Ss^{3})^s.
\]
\end{theorem}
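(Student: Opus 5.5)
The plan is to work entirely in the logarithmic polar coordinates just introduced, in which the action becomes a translation action on one factor.

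\emph{Step 1: the diffeomorphism $E_I^\ast\cong\Rr^s\times(\Ss^3)^s$.} The map $(\rho,u)\mapsto(e^{\rho_i}u_i)_{i\in I}$ is a diffeomorphism onto $E_I^\ast$. Its inverse is $q_i\mapsto(\log|q_i|,\,q_i/|q_i|)$, which is smooth because $q_i\neq0$ on the stratum.

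\emph{Step 2: the action in these coordinates.} Since $e^{\langle\lambda_i,T\rangle}$ is a positive real scalar, it rescales $|q_i|$ and fixes $q_i/|q_i|$. Hence $\Phi_T$ is conjugate to $(\rho,u)\mapsto(\rho+A_I(T),u)$.

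\emph{Step 3: the ambient orbit space.} By Step 2, the orbit space is $\bigl(\Rr^s/A_I(\Rr^p)\bigr)\times(\Ss^3)^s$. The image $A_I(\Rr^p)$ is a linear subspace $W$ of $\Rr^s$, and its dimension equals $\rank A_I=q(I)$, the rank of $\{\lambda_i\}_{i\in I}$. Choose a linear complement $W'$ of $W$. The quotient of $\Rr^s$ by translations by $W$ is then identified with $W'\cong\Rr^{s-q(I)}$ via the linear projection. This is a free, proper action of a closed subgroup, so the orbit space is a smooth manifold and the quotient map is a trivial bundle. This gives the first displayed isomorphism.

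\emph{Step 4: the trace leaf space.} Use Theorem~\ref{thm:q-cone}: $\Theta_r$ identifies $\Hh^n\setminus\{0\}$ with $\Ss_r^{4n-1}\times\Rr$ along $\Rr\xi$-orbits, and it preserves strata. So every ambient orbit in $E_I^\ast$ meets $E_I^\ast\cap\Ss_r^{4n-1}$.

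The main point is that this intersection is a \emph{single} trace leaf, that is, it is connected. Let $q$ and $\Phi_T(q)$ both lie on the sphere. Write $T=U+t\xi$ with $U\in H_\xi$. By uniqueness of the radial correction, $\Phi_T(q)=\Psi_U(q)$, so the two points lie in the same $H_\xi$-orbit of Theorem~\ref{thm:q-trace-action}. Conversely, trace leaves lie inside ambient orbits.

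Therefore the map $(E_I^\ast\cap\Ss_r^{4n-1})/\widehat{\mathcal F}_\Lambda\to E_I^\ast/\Rr^p$ is a bijection.

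\emph{Step 5: smoothness of the bijection.} The forward map is continuous and is induced by the inclusion. Its inverse is induced by the smooth retraction $q\mapsto\Phi_{t(q)\xi}(q)$ onto the sphere, where $t(q)$ is the hitting time. This retraction is constant on ambient orbits up to trace equivalence, which gives a smooth inverse.

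I expect Step 4 to be the only nontrivial step, namely the identification of sphere intersections with single trace orbits. The trivial case $q(I)=0$ cannot occur for $I\neq\varnothing$, because $\langle\lambda_i,\xi\rangle>0$ forces every $\lambda_i\neq0$, so $q(I)\ge1$. In that situation the trace leaves have dimension $q(I)-1\geq0$, which is consistent with Theorem~\ref{thm:q-trace-action}.
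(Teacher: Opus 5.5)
Your proposal is correct and follows the paper's proof. Logarithmic polar coordinates turn the action into translation of the $\Rr^s$ factor by the $q(I)$-dimensional subspace $A_I(\Rr^p)$, and the conical model of Theorem~\ref{thm:q-cone} then identifies the ambient and trace leaf spaces; your Step~4 (uniqueness of the radial correction shows that an ambient orbit meets the sphere in a single $H_\xi$-orbit) makes explicit the connectedness that the paper reads off from the product decomposition $L\cong (L\cap\Ss_r^{4n-1})\times\Rr$ of each punctured ambient leaf.
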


\begin{proof}
The first statement follows from the logarithmic radial coordinates:
the action translates the $\Rr^s$ factor by the closed vector subspace
$A_I(\Rr^p)$ and leaves all quaternionic phases fixed.  The quotient is
therefore
\[
\Rr^s/A_I(\Rr^p)\times(\Ss^{3})^s.
\]
The second statement follows because
Theorem~\ref{thm:q-cone} identifies every punctured ambient leaf with a
trace leaf times $\Rr$ and therefore identifies the two leaf spaces.
\end{proof}

On the principal stratum, assuming full rank,
\[
\left(\Ss_r^{4n-1}\cap(\Hh^\ast)^n\right)/
\widehat{\mathcal F}_\Lambda
\cong
\Rr^{n-4m}\times(\Ss^{3})^n.
\]
This is an explicit smooth leaf space for the regular part of the
trace foliation.

\subsection{Orbit closures}

The diagonal form of the action gives a universal restriction on
closures.

\begin{proposition}[Support monotonicity]\label{prop:q-support}
Let $q\in \Ss_r^{4n-1}$ and let $y$ belong to the closure of its trace
leaf.  Then
\[
I(y)\subseteq I(q).
\]
\end{proposition}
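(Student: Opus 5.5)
The plan is to use the fact that every point of the trace leaf through $q$ has exactly the same support as $q$, and that the set of points whose support is contained in $I(q)$ is closed. By Theorem~\ref{thm:q-trace-action}, the trace leaf through $q$ lies inside the ambient orbit $\{\Phi_T(q):T\in\Rr^p\}$ intersected with $\Ss_r^{4n-1}$. More explicitly, every point of the leaf has the form $\Psi_U(q)=\Phi_{U+\tau(U,q)\xi}(q)$.

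The first step is to record what $\Phi_T$ does coordinatewise. The $i$th coordinate of $\Phi_T(q)$ is $e^{\langle\lambda_i,T\rangle}q_i$. The exponential factor is a positive real number, so this coordinate vanishes if and only if $q_i=0$. Hence every point of the ambient orbit, and in particular every point of the trace leaf, lies in the stratum $E_{I(q)}^\ast$. A fortiori the leaf is contained in the coordinate subspace
\[
E_{I(q)}=\{x\in\Hh^n:x_j=0\ \text{for } j\notin I(q)\}.
\]

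The second step is a closure argument. The subspace $E_{I(q)}$ is a linear subspace of $\Hh^n$, hence closed, being the common zero set of the continuous coordinate functions $x_j$ for $j\notin I(q)$. Take $y$ in the closure of the leaf and choose a sequence $y^{(k)}$ in the leaf with $y^{(k)}\to y$. For each $j\notin I(q)$ we have $y^{(k)}_j=0$ for all $k$, so $y_j=\lim_k y^{(k)}_j=0$. Therefore $I(y)\subseteq I(q)$. Since the sphere is also closed, $y$ in fact lies in $E_{I(q)}\cap\Ss_r^{4n-1}$, but only the support statement is needed here.

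There is no real obstacle in this argument. The only point requiring care is that closure can shrink the support but never enlarge it. Coordinates with $i\in I(q)$ may tend to zero along the leaf, for example when $\langle\lambda_i,T\rangle\to-\infty$, which is exactly why the inclusion can be strict. This causes no difficulty, because we only use that the vanishing coordinates of $q$ remain zero in the limit.
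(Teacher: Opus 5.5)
Your proof is correct and follows essentially the same argument as the paper: the positive real exponential factors in $\Phi_T$, together with the radial correction, keep the zero coordinates of $q$ equal to zero, and this vanishing passes to limits. The only difference is that you spell out the closure step through the closedness of the coordinate subspace $E_{I(q)}$, which the paper states in one line.
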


\begin{proof}
If $q_j=0$, then the $j$th coordinate remains zero under the ambient
action and under the radial correction.  The same is true on the
closure.
\end{proof}

In the present real setting one can say more than in the complex
exponential case.

\begin{proposition}[No same-support recurrence]\label{prop:no-recurrence}
A trace leaf is closed inside its support stratum
\[
E_I^\ast\cap \Ss_r^{4n-1}.
\]
Consequently every limit point of a trace leaf which is not on the leaf
itself has strictly smaller support.
\end{proposition}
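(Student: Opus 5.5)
Work in the logarithmic radial coordinates of Theorem~\ref{thm:q-stratum-quotient}. On $E_I^\ast$ we write $q_i=e^{\rho_i}u_i$, $i\in I$, so that $E_I^\ast\cong\Rr^s\times(\Ss^3)^s$. The ambient action is then $(\rho,u)\mapsto(\rho+A_I(T),u)$. The ambient orbit through $q=(\rho,u)$ is therefore
\[
(\rho+W)\times\{u\},
\qquad W=A_I(\Rr^p)\subset\Rr^s .
\]
This is a closed subset of $E_I^\ast$, being the product of a closed affine subspace with a point. The first step is to record this closedness of ambient orbits inside the stratum.

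The second step passes to the sphere. By Theorem~\ref{thm:q-trace-action} the trace leaf is a connected component of $\bigl((\rho+W)\times\{u\}\bigr)\cap\Ss_r^{4n-1}$. The relevant piece of the sphere is the hypersurface $\{\sum_{i\in I}e^{2\rho_i}=r^2\}$ in the $\rho$-variables. Its intersection with the affine space $\rho+W$ is closed in $E_I^\ast\cap\Ss_r^{4n-1}$. It remains to see that this intersection is connected, so that it equals a single trace leaf rather than a union of components. For this I would use the conical model, Theorem~\ref{thm:q-cone}. Radial projection along $\Rr\xi$ maps the ambient orbit $\rho+W\cong\Rr^{q(I)}$ onto its trace. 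This map is a continuous surjection from a connected space onto the whole intersection, because every point of the intersection lies on the ambient orbit and is fixed by the projection. Hence the intersection is connected, and the trace leaf is exactly this closed set.

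I expect the only subtle point to be this identification of the leaf with the full intersection, rather than with one component of it. The conical diffeomorphism $\Theta_r$ handles it. Explicitly, the retraction $y\mapsto\Phi_{t(y)\xi}(y)$ onto the sphere preserves the ambient orbit and is continuous.

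Finally, the consequence follows directly. Let $y$ be a limit point of the leaf with $y$ not on the leaf. Proposition~\ref{prop:q-support} gives $I(y)\subseteq I$. If $I(y)=I$, then $y$ would lie in $E_I^\ast\cap\Ss_r^{4n-1}$, where the leaf is closed, which is a contradiction. Hence the inclusion is strict.
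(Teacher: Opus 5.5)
Your proof is correct and follows the paper's argument: the ambient orbit $(\rho+A_I(\Rr^p))\times\{u\}$ is closed in $E_I^\ast$, so its trace is closed in $E_I^\ast\cap\Ss_r^{4n-1}$, and Proposition~\ref{prop:q-support} then gives the consequence. Your extra connectedness step via the retraction along $\Rr\xi$ is valid and makes explicit something the paper leaves implicit, though it is not strictly needed, since a connected component of a closed subset is itself closed.
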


\begin{proof}
In logarithmic radial coordinates on $E_I^\ast$, an ambient orbit is
an affine translate of the closed vector subspace $A_I(\Rr^p)$,
together with a fixed point of $(\Ss^{3})^s$.  Hence it is closed in
$E_I^\ast$.  Its intersection with the sphere is therefore closed in
$E_I^\ast\cap \Ss_r^{4n-1}$.  The second assertion follows from
Proposition~\ref{prop:q-support}.
\end{proof}

Quasiperiodic winding does not occur here because the action does not
rotate the quaternionic phases.

\subsection{Commutation with the quaternionic phase action}

The right coordinatewise action of $\Sp(1)^n$,
\[
(q_1,\ldots,q_n)\cdot
(u_1,\ldots,u_n)
=
(q_1u_1,\ldots,q_nu_n),
\]
commutes with the real action~\eqref{eq:poincare-action}.  It also
commutes with the trace action~\eqref{eq:trace-action}, since the
radial correction depends only on the norms $|q_i|$.

\begin{proposition}\label{prop:q-phase-transverse}
The group $\Sp(1)^n$ acts by automorphisms of the trace foliation.
On each support stratum, the quotient description of
Theorem~\ref{thm:q-stratum-quotient} is equivariant with respect to
the natural $\Sp(1)^I$ action on the factor $(\Ss^{3})^I$.
\end{proposition}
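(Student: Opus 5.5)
The plan is to check commutation directly and then read off the equivariance in the logarithmic coordinates of Theorem~\ref{thm:q-stratum-quotient}. Write $R_u(q)=(q_1u_1,\ldots,q_nu_n)$ for $u\in\Sp(1)^n$.

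First, $R_u$ commutes with every $\Phi_T$. In each coordinate $\Phi_T$ multiplies $q_i$ on the left by the positive real scalar $e^{\langle\lambda_i,T\rangle}$. This scalar is central in $\Hh$, so
\[
e^{\langle\lambda_i,T\rangle}(q_iu_i)=(e^{\langle\lambda_i,T\rangle}q_i)u_i .
\]
Next, $R_u$ is an isometry of $\Hh^n$ that preserves every norm $|q_i|$. It therefore preserves each sphere $\Ss_r^{4n-1}$ and each stratum $E_I^\ast$. The radial correction $\tau(U,q)$ is defined by
\[
\sum_i|q_i|^2e^{2\langle\lambda_i,U+\tau\xi\rangle}=r^2 .
\]
This equation involves only the numbers $|q_i|$, and its solution is unique by Theorem~\ref{thm:q-cone}. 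Hence $\tau(U,R_uq)=\tau(U,q)$, and combining this with the commutation with $\Phi$ gives $\Psi_U\circ R_u=R_u\circ\Psi_U$.

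It follows that $R_u$ carries trace leaves to trace leaves. Equivalently, by Theorem~\ref{thm:q-trace-action}, it maps $H_\xi$-orbits onto $H_\xi$-orbits. Since $R_u$ is a diffeomorphism of the sphere, it is an automorphism of the singular foliation. The same conclusion also follows at the infinitesimal level. The push-forward of each generator $Y_U$ in \eqref{eq:trace-vector-field} is again $Y_U$, because $X_U$ and $X_\xi$ commute with right multiplication and $a_U$ depends only on the norms $|q_i|$.

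For the equivariance statement, use the logarithmic coordinates $q_i=e^{\rho_i}u_i$ on $E_I^\ast$. Right multiplication by $v\in\Sp(1)^I$ acts by $(\rho,w)\mapsto(\rho,wv)$; the factors $u_j$ with $j\notin I$ act trivially, since those coordinates vanish. The real action is $(\rho,w)\mapsto(\rho+A_I(T),w)$. The quotient map
\[
E_I^\ast\to\Rr^s/A_I(\Rr^p)\times(\Ss^{3})^s
\]
is therefore equivariant when $\Sp(1)^I$ acts trivially on the first factor and by right multiplication on $(\Ss^3)^I$. The sphere version of the quotient is identified with this one through the conical diffeomorphism $\Theta_r$ of Theorem~\ref{thm:q-cone}. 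That diffeomorphism commutes with $R_u$, because $R_u$ commutes with $\Phi_{t\xi}$ and preserves the sphere. So the identification of leaf spaces in Theorem~\ref{thm:q-stratum-quotient} is equivariant as well.

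I do not expect a genuine obstacle. The one point that needs care is the invariance of $\tau$, together with the bookkeeping showing that the coordinates outside $I$ act trivially, so that the relevant group on the stratum is exactly $\Sp(1)^I$.
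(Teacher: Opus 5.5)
Your proposal is correct and follows the paper's own justification. That justification is the one-paragraph remark preceding the proposition: the right $\Sp(1)^n$ action commutes with the real dilation action, and it commutes with the trace action because the radial correction $\tau(U,q)$ depends only on the norms $|q_i|$. You additionally supply the details the paper leaves implicit, namely the logarithmic-coordinate equivariance on $E_I^\ast$, the trivial action of the factors outside $I$, and the compatibility of $\Theta_r$ with $R_u$, and all of these check out.
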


Thus the Poincar\'e trace foliation separates the two pieces of the
quaternionic geometry very clearly: the real action controls the
radial variables, while the compact $\Sp(1)^n$ action controls the
quaternionic phases.

\subsection{The standard $3$-Sasakian structure}

A $3$-Sasakian structure is a Riemannian structure whose metric cone is
hyperk\"ahler.  The sphere $\Ss_r^{4n-1}\subset\Hh^n$ carries the standard
example \cite{BGP}.  We use the convention adapted to the
diagonal left $\Sp(1)$ action.  Let
\[
I_1=i,\qquad I_2=j,\qquad I_3=k
\]
act by left quaternionic multiplication and put
\[
\xi_\alpha(q)=I_\alpha q,
\qquad
\eta_\alpha(V)=\frac{1}{r^2}
\langle V,I_\alpha q\rangle,
\qquad
\alpha=1,2,3.
\]
The three vector fields $\xi_\alpha$ generate the diagonal left
$\Sp(1)$ action.  The quaternionic contact distribution is
\[
\mathcal H_q
=
\bigcap_{\alpha=1}^3\ker\eta_\alpha
\subset T_q\Ss_r^{4n-1}.
\]
On $\mathcal H$ the three forms $d\eta_\alpha$ are the fundamental
$2$-forms of the standard quaternionic contact structure, up to the
usual harmless normalization; compare \cite{IMV}.

The trace foliation has a particularly simple position with respect
to this structure.

\begin{theorem}[Horizontal and isotropic trace leaves]
\label{thm:q-qc-isotropic}
Every trace leaf of the Poincar\'e foliation is tangent to
$\mathcal H$.  Moreover, for every $\alpha=1,2,3$,
\[
d\eta_\alpha|_{T\widehat{\mathcal L}}=0.
\]
Equivalently, the tangent space of a trace leaf is simultaneously
isotropic for the three fundamental $2$-forms of the standard
$3$-Sasakian structure.
\end{theorem}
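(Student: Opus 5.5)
The plan is to compute directly on tangent vectors of a trace leaf, using the explicit generators $Y_U=X_U-a_U X_\xi$ from \eqref{eq:trace-vector-field}. Since the trace leaf through $q$ is an orbit of the trace action (Theorem~\ref{thm:q-trace-action}), its tangent space at $q$ is spanned by the vectors $Y_U(q)$, $U\in H_\xi$. Each such vector is a combination of fields of the form $X_T(q)=(\langle\lambda_i,T\rangle q_i)_i$, that is, of fields $D(q)=(c_1q_1,\ldots,c_nq_n)$ with real coefficients $c_i$. The key pointwise observation is that the tangent space at $q$ is contained in the real span of vectors of the form $(c_iq_i)_i$ with $c\in\Rr^n$.

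\textbf{Horizontality.} For $V=(c_iq_i)$ I would compute
\[
\langle V,I_\alpha q\rangle=\sum_i c_i\operatorname{Re}\bigl(\overline{q_i}\,I_\alpha q_i\bigr).
\]
Since $\overline{q_i}I_\alpha q_i=|q_i|^2\,(\text{unit imaginary quaternion})$ has zero real part, $\eta_\alpha(V)=0$. Hence every tangent vector to a trace leaf lies in $\mathcal H_q$. Tangency to the sphere is already built in.

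\textbf{Isotropy.} By linearity it suffices to show $d\eta_\alpha(V,W)=0$ for $V=(c_iq_i)$ and $W=(b_iq_i)$ at $q$. Rather than extend to vector fields and use brackets, I would write $\eta_\alpha$ as the restriction of the ambient $1$-form $\frac1{r^2}\langle\cdot,I_\alpha q\rangle$. Its exterior derivative is the constant $2$-form
\[
(V,W)\mapsto \tfrac{1}{r^2}\bigl(\langle V,I_\alpha W\rangle-\langle W,I_\alpha V\rangle\bigr)=\tfrac{2}{r^2}\langle V,I_\alpha W\rangle ,
\]
using that left multiplication by $I_\alpha$ is skew-adjoint, up to sign conventions. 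Then
\[
\langle V,I_\alpha W\rangle=\sum_i c_ib_i\operatorname{Re}\bigl(\overline{q_i}I_\alpha q_i\bigr)=0
\]
by the same vanishing real part, since $c_i$ and $b_i$ are real. Restriction to the sphere commutes with $d$, which gives $d\eta_\alpha|_{T\widehat{\mathcal L}}=0$.

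The only delicate point is bookkeeping. The conventions for $\eta_\alpha$ and the factor in $d\eta_\alpha$ only affect constants. The essential input is that the dilation coefficients are real, and this is where I would be careful: the argument rests on the identity $\operatorname{Re}(\bar q\,I q)=0$ for imaginary $I$. I would also remark that the same reasoning applies on every support stratum, including singular leaves, since only the pointwise description of the tangent space is used.
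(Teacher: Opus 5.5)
Your proposal is correct and follows essentially the same route as the paper: tangent vectors to a trace leaf have the form $(c_iq_i)$ with $c_i\in\Rr$, and both $\eta_\alpha$ and $d\eta_\alpha$ (which is, up to normalization, $\frac{2}{r^2}\langle\cdot,I_\alpha\cdot\rangle$) vanish on such vectors because $\operatorname{Re}(\overline{q_i}I_\alpha q_i)=0$. The only minor difference is that you get the formula for $d\eta_\alpha$ by restricting the exterior derivative of the ambient linear $1$-form, which holds on all tangent vectors, whereas the paper quotes the standard $3$-Sasakian identity on horizontal vectors.
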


\begin{proof}
A tangent vector to a trace leaf has the form
\[
Y=(c_1q_1,\ldots,c_nq_n),
\qquad c_i\in\Rr,
\]
with
\[
\sum_i c_i|q_i|^2=0
\]
because $Y$ is tangent to the sphere.  For every imaginary unit
$I_\alpha$,
\[
\eta_\alpha(Y)
=
\frac1{r^2}
\sum_i c_i\langle q_i,I_\alpha q_i\rangle
=0,
\]
since $\langle q,I_\alpha q\rangle=0$.  Thus
$Y\in\mathcal H$.

For horizontal vectors the standard $3$-Sasakian identities give,
up to normalization,
\[
d\eta_\alpha(Y,Z)
=
\frac{2}{r^2}\langle Y,I_\alpha Z\rangle.
\]
If
\[
Y=(c_iq_i),\qquad Z=(d_iq_i)
\]
are tangent to the same trace leaf, then
\[
\langle Y,I_\alpha Z\rangle
=
\sum_i c_id_i\langle q_i,I_\alpha q_i\rangle
=0.
\]
Hence $d\eta_\alpha(Y,Z)=0$ for all $\alpha$.
\end{proof}

This gives a geometric meaning to the fact that the trace action changes
only the norms of the quaternionic coordinates and leaves their
quaternionic phases fixed.

Let
\[
\pi:\Ss_r^{4n-1}\longrightarrow\Hh P^{n-1}
\]
be the Hopf projection associated with the diagonal left $\Sp(1)$ action.
Since the tangent spaces of the trace leaves are horizontal,
$d\pi$ is injective on them.

\begin{corollary}[Projection to quaternionic projective space]
\label{cor:q-totally-real}
The restriction of $\pi$ to every trace leaf is an isometric immersion
for the submersion metric.  Its image is totally real with respect to the quaternionic structure
of $\Hh P^{n-1}$.  If $I_1,I_2,I_3$ is a local quaternionic frame and
$\omega_\alpha(V,W)=g(I_\alpha V,W)$ are the corresponding fundamental
$2$-forms, then
\[
\omega_\alpha|_{d\pi(T\widehat{\mathcal L})}=0,
\qquad \alpha=1,2,3.
\]
In particular the Kraines $4$-form vanishes on four tangent vectors of
a projected trace leaf.
\end{corollary}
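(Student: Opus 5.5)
The corollary follows from Theorem~\ref{thm:q-qc-isotropic} together with the standard description of the quaternionic Hopf fibration as a Riemannian submersion. The plan has three steps.

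\emph{Step 1: isometric immersion.} The submersion metric on $\Hh P^{n-1}$ is defined so that $d\pi_q\colon\mathcal H_q\to T_{\pi(q)}\Hh P^{n-1}$ is a linear isometry. Here $\mathcal H_q$ is the orthogonal complement in $T_q\Ss_r^{4n-1}$ of the vertical space spanned by $\xi_1,\xi_2,\xi_3$. By Theorem~\ref{thm:q-qc-isotropic}, $T\widehat{\mathcal L}\subset\mathcal H$. Hence $d\pi$ restricted to $T\widehat{\mathcal L}$ is the restriction of an isometry, and in particular it is injective. So $\pi|_{\widehat{\mathcal L}}$ is an isometric immersion. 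Injectivity on the leaf itself is not claimed; only immersion is needed.

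\emph{Step 2: identify the quaternionic structure on horizontal vectors.} We work with the left-projective convention, so the quaternionic structure on $T_{[q]}\Hh P^{n-1}$ is induced by a local frame. The frame is not global, but the subbundle it spans is well defined.

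I will check that horizontal lifts intertwine this structure with left multiplication by $I_\alpha$ on $\mathcal H_q$, up to an $\mathrm{SO}(3)$ rotation of the frame. The point is that $\mathcal H_q$ is preserved by left multiplication by $i,j,k$: if $\langle V,I_\beta q\rangle=0$ for all $\beta$, then $\langle I_\alpha V,I_\beta q\rangle=\pm\langle V,I_\alpha I_\beta q\rangle$, and $I_\alpha I_\beta q$ lies in the span of $q$ and the $I_\gamma q$. The component along $q$ also vanishes, because $V$ is tangent to the sphere. The resulting endomorphisms descend along the fibre up to conjugation by the $\Sp(1)$ action, that is, up to a rotation of the frame.

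With this, $\omega_\alpha(d\pi Y,d\pi Z)$ is a fixed real linear combination of the three quantities $\langle I_\beta Y,Z\rangle$. By the computation in Theorem~\ref{thm:q-qc-isotropic}, each of these equals $\pm\sum_i c_id_i\langle I_\beta q_i,q_i\rangle=0$. Since the vanishing holds for all three $\beta$, it is invariant under rotating the frame, so the conclusion does not depend on the local frame chosen.

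\emph{Step 3: total reality and the Kraines form.} From Step 2, for tangent vectors $Y,Z$ of a projected leaf we get $g(I_\alpha\,d\pi Y,d\pi Z)=0$. So $I_\alpha(d\pi T\widehat{\mathcal L})$ is orthogonal to $d\pi T\widehat{\mathcal L}$ for each $\alpha$, which is total reality.

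The Kraines form is $\sum_\alpha\omega_\alpha\wedge\omega_\alpha$ up to normalization. Evaluated on four vectors from the image, every term of $\omega_\alpha\wedge\omega_\alpha$ is a product of values of $\omega_\alpha$ on pairs of those vectors, and each such value is zero. Hence the Kraines form vanishes on any four tangent vectors of a projected trace leaf.

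The main obstacle is Step 2. It requires careful bookkeeping of the left-versus-right convention and of the frame ambiguity in descending the quaternionic structure. The argument avoids most of this by using only that all three pairings $\langle I_\beta Y,Z\rangle$ vanish simultaneously, which is a frame-independent condition.
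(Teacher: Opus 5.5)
Your proposal is correct and follows essentially the same route as the paper. Horizontality of the trace leaves (Theorem~\ref{thm:q-qc-isotropic}) combined with the Riemannian-submersion property of $\pi$ gives the isometric immersion, and the vanishing of the fundamental $2$-forms (hence total reality and the vanishing of $\Omega_{\mathrm K}=\sum_\alpha\omega_\alpha\wedge\omega_\alpha$) reduces to the identity $\langle I_\beta Y,Z\rangle=\sum_i c_id_i\langle I_\beta q_i,q_i\rangle=0$. Your explicit handling of the $\mathrm{SO}(3)$ frame ambiguity, where the horizontal lifts of the $\omega_\alpha$ are linear combinations rather than mere multiples of the $d\eta_\beta|_{\mathcal H}$, is just a more careful version of the paper's one-line remark, not a different argument.
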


\begin{proof}
The Hopf map is a Riemannian submersion on the horizontal distribution.
The horizontal pullbacks of the local fundamental $2$-forms are
multiples of $d\eta_\alpha$.  The assertion follows from
Theorem~\ref{thm:q-qc-isotropic}.  The Kraines form is, locally and up
to normalization,
\[
\Omega_{\mathrm K}
=
\omega_1\wedge\omega_1+
\omega_2\wedge\omega_2+
\omega_3\wedge\omega_3,
\]
so its restriction to the projected leaf is zero.
\end{proof}

\begin{remark}
For the LVMQ range $n>4m$, a principal trace leaf has dimension
$4m-1<n-1$, which is compatible with the usual dimension bound for a
totally real subspace of a quaternionic Hermitian vector space of
quaternionic dimension $n-1$.
\end{remark}

\subsection{Metric geometry of the real trace leaves}

Fix a support $I$ and phases $u_i\in \Ss^{3}$.  On the sphere write
\[
q_i=r\sqrt{x_i}\,u_i,
\qquad x_i>0,
\qquad \sum_{i\in I}x_i=1.
\]
Let $\xi\in C_\Lambda$ be the separating vector used to define the trace
action, and put
\[
b_i=\langle\lambda_i,\xi\rangle>0.
\]
For $U\in H_\xi$, the infinitesimal radial correction is determined by
keeping the norm constant.  If
\[
\bar\lambda_U=\sum_{i\in I}x_i\langle\lambda_i,U\rangle,
\qquad
\bar b=\sum_{i\in I}x_i b_i,
\]
then
\[
\dot\tau_U=-\frac{\bar\lambda_U}{\bar b}.
\]

\begin{theorem}[Metric on a real trace leaf]\label{thm:q-trace-metric}
For $U,V\in H_\xi$, the round metric of $\Ss_r^{4n-1}$ induces on the
trace leaf the bilinear form
\[
g^{\rm tr}_q(U,V)
=
r^2\sum_{i\in I}x_i
\left(\lambda_U(i)-b_i\frac{\bar\lambda_U}{\bar b}\right)
\left(\lambda_V(i)-b_i\frac{\bar\lambda_V}{\bar b}\right),
\]
where $\lambda_U(i)=\langle\lambda_i,U\rangle$.  Its kernel is exactly
the infinitesimal isotropy of the trace action,
\[
\{U\in H_\xi:\ U+c\xi\in K_I\text{ for some }c\in\Rr\}.
\]
Consequently it is positive definite on the tangent space of the trace
leaf, whose dimension is $q(I)-1$.
\end{theorem}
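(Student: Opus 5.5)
Differentiate the trace action at $U=0$ and compute the round metric on the resulting tangent vectors. By \eqref{eq:trace-vector-field}, the infinitesimal generator at $q$ is $Y_U(q)=X_U(q)-a_U(q)X_\xi(q)$. Its $i$th coordinate is
\[
\bigl(\lambda_U(i)-a_U(q)\,b_i\bigr)q_i .
\]
With $|q_i|^2=r^2x_i$ we have $a_U(q)=\bar\lambda_U/\bar b$, which is $-\dot\tau_U$.

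Next compute the Euclidean inner product of two such fields. Both $Y_U$ and $Y_V$ are real multiples of the same $q_i$ in each coordinate, and $\langle c q_i,d q_i\rangle=cd|q_i|^2$. Summing over $i\in I$ therefore gives exactly the stated formula for $g^{\rm tr}_q(U,V)$. The round metric of $\Ss_r^{4n-1}$ is the restriction of the Euclidean one, and $Y_U$ is tangent to the sphere. Hence this is the induced metric pulled back along the orbit map $U\mapsto\Psi_U(q)$.

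For the kernel, the form is a weighted sum of squares with weights $r^2x_i>0$ on $I$. So $g^{\rm tr}_q(U,U)=0$ if and only if $\lambda_U(i)=c\,b_i$ for all $i\in I$, with $c=\bar\lambda_U/\bar b$. This says $U-c\xi\in K_I$.

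Conversely, suppose $U+c\xi\in K_I$ for some $c$. Then $\lambda_U(i)=-c\,b_i$ on $I$, and averaging gives $\bar\lambda_U/\bar b=-c$, so every term vanishes. (The sign of $c$ is immaterial because $c$ ranges over $\Rr$.) These $U$ are precisely those with $Y_U(q)=0$: the vector $U-a_U\xi$ lies in the ambient isotropy $K_I$ of Proposition~\ref{prop:poincare-orbits}. So the kernel is the infinitesimal isotropy of the trace action at $q$.

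The bilinear form is positive semidefinite with this kernel, so it descends to a positive definite form on $H_\xi/\ker$. This quotient is identified, via $U\mapsto Y_U(q)$, with the tangent space of the trace leaf. Its dimension is $q(I)-1$ by Theorem~\ref{thm:q-trace-action}. As a consistency check, the map $H_\xi\to\Rr^p/K_I$, $U\mapsto[U]$, composed with projection modulo $[\xi]$, has kernel of dimension $(p-1)-(q(I)-1)$. This uses $\xi\notin K_I$, since $b_i>0$.

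The only delicate point is confirming that the ad hoc $\dot\tau_U$ coincides with $-a_U$. This follows by differentiating $\|\Psi_{sU}(q)\|^2=r^2$ at $s=0$ using Lemma~\ref{lem:hessian}-style derivatives; the computation is elementary.
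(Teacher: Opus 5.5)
Your proof is correct and follows essentially the same route as the paper: you differentiate the norm constraint to get $\dot\tau_U=-\bar\lambda_U/\bar b$, evaluate the round metric on the resulting orbit tangent vectors, and read off the kernel from the weighted sum of squares. The only cosmetic differences are that you pair $Y_U(q)$ and $Y_V(q)$ directly in $\Hh^n$, whereas the paper uses the radial coordinates $x_i$ and $ds^2=\frac{r^2}{4}\sum_i dx_i^2/x_i$, and that your explicit handling of the sign of $c$ and of the dimension count makes precise what the paper leaves implicit.
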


\begin{proof}
Differentiate the norm constraint for
$\Phi_{U+\tau(U,q)\xi}(q)$.  At the given point this gives
\[
0=2\sum_i x_i\bigl(\lambda_U(i)+\dot\tau_U b_i\bigr),
\]
which is the displayed formula for $\dot\tau_U$.  Since
$q_i=r\sqrt{x_i}u_i$ with fixed phases, the radial part of the round
metric is
\[
ds^2=\frac{r^2}{4}\sum_i\frac{dx_i^2}{x_i}.
\]
Moreover
\[
d\log x_i[U]
=2\left(\lambda_U(i)-b_i\frac{\bar\lambda_U}{\bar b}\right).
\]
Substitution gives the metric formula.

The quadratic form vanishes exactly when
$\lambda_U(i)=c b_i$ for all $i\in I$, where
$c=\bar\lambda_U/\bar b$.  Equivalently
$U-c\xi\in K_I$, which is precisely the infinitesimal isotropy of the
trace action.  The dimension statement agrees with
Theorem~\ref{thm:q-trace-action}.
\end{proof}

There is a related Hessian metric, but it belongs to the \emph{radially
normalized ambient orbit}, not in general to the trace leaf.  This
distinction is important.  Define
\[
\widehat\Phi_T(q)=r\,\frac{\Phi_T(q)}{\|\Phi_T(q)\|}.
\]
Then the squared norms are
\begin{equation}\label{eq:exp-family}
x_i(T)
=
\frac{x_i(0)e^{2\langle\lambda_i,T\rangle}}
{\displaystyle\sum_{j\in I}x_j(0)e^{2\langle\lambda_j,T\rangle}}.
\end{equation}
Set
\[
\varphi(T)=\log\left(\sum_{i\in I}
x_i(0)e^{2\langle\lambda_i,T\rangle}\right)
\]
and
\[
K_I^{\rm norm}
=
\{U\in\Rr^p:\langle\lambda_i,U\rangle
\text{ is independent of }i\in I\}.
\]

\begin{proposition}[Hessian metric after radial normalization]
\label{prop:q-hessian-normalized}
The metric induced on the radially normalized orbit is
\[
g_T^{\rm norm}(U,V)
=
r^2\operatorname{Cov}_{x(T)}(\lambda_U,\lambda_V)
=
\frac{r^2}{4}D^2\varphi_T(U,V).
\]
Its kernel is exactly $K_I^{\rm norm}$, so it is positive definite on
$\Rr^p/K_I^{\rm norm}$.
\end{proposition}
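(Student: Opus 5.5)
The plan is a direct computation in logarithmic radial coordinates on the stratum $E_I^\ast$, as in the proof of Theorem~\ref{thm:q-trace-metric}. Fix the phases $u_i$. The radially normalized orbit point is $q_i(T)=r\sqrt{x_i(T)}\,u_i$, with $x_i(T)$ given by \eqref{eq:exp-family}. The phases are constant along the normalized orbit, so only the radial part of the round metric contributes:
\[
ds^2=\frac{r^2}{4}\sum_{i\in I}\frac{dx_i^2}{x_i}
=\frac{r^2}{4}\sum_{i\in I}x_i\,(d\log x_i)^2 .
\]
This is the Fisher--information form of the metric on the simplex, and the whole computation reduces to differentiating $\log x_i(T)$.

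From \eqref{eq:exp-family} we have $\log x_i(T)=\log x_i(0)+2\langle\lambda_i,T\rangle-\varphi(T)$. The derivative of $\varphi$ in direction $U$ is
\[
d\varphi_T[U]=2\sum_j x_j(T)\lambda_U(j)=2\,\mathbb E_{x(T)}[\lambda_U],
\]
so $d\log x_i[U]=2\bigl(\lambda_U(i)-\mathbb E_{x(T)}[\lambda_U]\bigr)$. Substituting into $ds^2$ gives
\[
g^{\rm norm}_T(U,V)=r^2\sum_i x_i(\lambda_U(i)-\bar\lambda_U)(\lambda_V(i)-\bar\lambda_V)
=r^2\operatorname{Cov}_{x(T)}(\lambda_U,\lambda_V).
\]

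For the Hessian identity, differentiate $d\varphi_T[U]=2\sum_j x_j(T)\lambda_U(j)$ once more. Using $dx_j[V]=x_j\,d\log x_j[V]=2x_j(\lambda_V(j)-\bar\lambda_V)$ we get
\[
D^2\varphi_T(U,V)=4\sum_j x_j\lambda_U(j)(\lambda_V(j)-\bar\lambda_V)=4\operatorname{Cov}(\lambda_U,\lambda_V),
\]
which is the standard log-partition identity. Hence $g^{\rm norm}_T=\tfrac{r^2}{4}D^2\varphi_T$.

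For the kernel, note that the form is a Gram form in the positive weights $x_i(T)$, $i\in I$. So $g^{\rm norm}_T(U,U)=0$ iff $\lambda_U(i)=\bar\lambda_U$ for every $i\in I$, i.e.\ iff $\langle\lambda_i,U\rangle$ is independent of $i\in I$, i.e.\ iff $U\in K_I^{\rm norm}$. A positive semidefinite form is positive definite on the quotient by its kernel, which gives the last assertion.

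The only point needing care is justifying that the induced metric is exactly the pullback under $T\mapsto\widehat\Phi_T(q)$. This map's differential kills $K_I^{\rm norm}$, which is the infinitesimal isotropy of the normalized action. Since $\widehat\Phi$ is still a group action ($\widehat\Phi_{T+S}=\widehat\Phi_T\circ\widehat\Phi_S$ by homogeneity of $\Phi_T$), computing at a general $T$ amounts to computing at base point $\widehat\Phi_T(q)$. I do not expect any genuine obstacle beyond this bookkeeping.
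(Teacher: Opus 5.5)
Your proposal is correct and follows the paper's proof essentially step for step. You differentiate $\log x_i(T)$ from the exponential-family formula, substitute into $ds^2=\frac{r^2}{4}\sum_i dx_i^2/x_i$ to get the covariance, differentiate $\varphi$ twice to get the Hessian identity, and read off the kernel from the vanishing of a weighted variance with positive weights. Your added remark that $\widehat\Phi$ is itself an action, with infinitesimal isotropy $K_I^{\rm norm}$, is correct; it is bookkeeping that the paper leaves implicit.
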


\begin{proof}
Along~\eqref{eq:exp-family},
\[
d\log x_i(T)[U]
=2\left(\lambda_U(i)-\sum_jx_j(T)\lambda_U(j)\right).
\]
Substitution into
$ds^2=(r^2/4)\sum_i dx_i^2/x_i$ gives the covariance formula, and direct
differentiation of $\varphi$ gives the Hessian formula.  A weighted
variance vanishes exactly when the values $\lambda_U(i)$ are constant on
$I$, which is the stated kernel.
\end{proof}

\begin{remark}
The normalized orbit in Proposition~\ref{prop:q-hessian-normalized} need
not coincide with the trace leaf, because radial normalization by a common
scalar need not be generated by the real weight action.  They coincide on
the support $I$ if there exists $R\in\Rr^p$ such that
$\langle\lambda_i,R\rangle=1$ for every $i\in I$; in that special case a
common radial rescaling is an action direction and the Hessian metric
restricts to the trace leaf after quotienting that direction.  No such
assumption is made in general.
\end{remark}

\subsection{Transverse differential forms}

The standard $3$-Sasakian structure of the sphere does not in general
descend to the leaf space.  This follows from the following calculation.

A differential form descends to the local leaf space only if it is
basic: it must vanish under contraction with every leafwise vector
field and be invariant along the leaves.  The contact forms satisfy
\[
\iota_Y\eta_\alpha=0
\]
for every leafwise $Y$, but they are not basic in general.

\begin{proposition}\label{prop:q-not-basic}
Let $Y$ be a nonvanishing local leafwise vector field tangent to a trace
leaf.  Then, for each
$\alpha$, one can choose a horizontal vector $Z$ such that
\[
d\eta_\alpha(Y,Z)\neq0.
\]
Consequently
\[
\iota_Yd\eta_\alpha\neq0,
\qquad
\mathcal L_Y\eta_\alpha
=
\iota_Yd\eta_\alpha\neq0
\]
in general.  Therefore the standard quaternionic contact forms do not
descend to the trace leaf space.
\end{proposition}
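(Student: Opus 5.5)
The plan is a direct computation that uses the formula $d\eta_\alpha(Y,Z)=\frac{2}{r^2}\langle Y,I_\alpha Z\rangle$ on horizontal vectors, as in the proof of Theorem~\ref{thm:q-qc-isotropic}. By that theorem, the leafwise field has the form $Y=(c_1q_1,\ldots,c_nq_n)$ with real $c_i$ and $\sum_ic_i|q_i|^2=0$. Since $Y\neq0$, we have $I_\alpha Y\neq 0$. The natural candidate for $Z$ is a horizontal vector with a nonzero component along $I_\alpha Y$. The obstacle is to make sure that $I_\alpha Y$, or its projection, is tangent to the sphere and horizontal, so that the formula applies.

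\textbf{Step 1: $I_\alpha Y$ is tangent to the sphere.} Indeed,
\[
\langle q,I_\alpha Y\rangle=\sum_ic_i\langle q_i,I_\alpha q_i\rangle=0 .
\]

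\textbf{Step 2: the vertical components of $I_\alpha Y$.} We compute
\[
\eta_\beta(I_\alpha Y)=\frac1{r^2}\sum_ic_i\langle I_\alpha q_i,I_\beta q_i\rangle
=\frac1{r^2}\sum_ic_i|q_i|^2\langle I_\alpha,I_\beta\rangle .
\]
This uses that left multiplication by $q_i$ composed with right multiplication is conformal, so that $\langle I_\alpha q,I_\beta q\rangle=|q|^2\delta_{\alpha\beta}$. Hence $\eta_\beta(I_\alpha Y)=\frac{1}{r^2}\delta_{\alpha\beta}\sum_ic_i|q_i|^2=0$ by the tangency condition. So $Z=I_\alpha Y$ is itself horizontal, and
\[
d\eta_\alpha(Y,Z)=\frac{2}{r^2}\langle Y,I_\alpha^2Y\rangle=-\frac{2}{r^2}\|Y\|^2\neq0 ,
\]
up to the normalization sign convention.

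\textbf{Step 3: consequences.} Since $\iota_Y\eta_\alpha=0$, Cartan's formula gives $\mathcal L_Y\eta_\alpha=\iota_Yd\eta_\alpha+d\iota_Y\eta_\alpha=\iota_Yd\eta_\alpha$. By Step 2 this is nonzero, since it takes the value $d\eta_\alpha(Y,Z)\neq0$ on $Z$. A basic form must satisfy both $\iota_Y\omega=0$ and $\mathcal L_Y\omega=0$. The $\eta_\alpha$ fail the second condition, and $d\eta_\alpha$ fails the first. So neither the contact forms nor their differentials are basic, and the quaternionic contact structure does not descend to the trace leaf space.

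The only delicate point is the verification in Step 2 that $I_\alpha Y$ is horizontal. If that step failed, the fallback would be to replace $Z$ by the horizontal projection of $I_\alpha Y$. That projection still pairs nontrivially with $I_\alpha Y$ through $\langle Y,I_\alpha Z\rangle$, because $Y$ is itself horizontal, so the conclusion would survive.
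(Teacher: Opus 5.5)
Your proof is correct and is essentially the paper's argument. The paper takes $Z=-I_\alpha Y$ (your choice $Z=I_\alpha Y$ only flips the sign) and concludes with Cartan's formula and $\iota_Y\eta_\alpha=0$; the one difference is that the paper simply asserts that $I_\alpha Y$ is horizontal because $Y$ is, while you check it directly from $Y=(c_iq_i)$ with $\sum_i c_i|q_i|^2=0$, which also makes your fallback paragraph unnecessary.
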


\begin{proof}
Since $Y$ is horizontal, so is $I_\alpha Y$.  Taking
$Z=-I_\alpha Y$ and using the standard identity for $d\eta_\alpha$,
\[
d\eta_\alpha(Y,Z)
=
\frac{2}{r^2}
\langle Y,I_\alpha(-I_\alpha Y)\rangle
=
\frac{2}{r^2}|Y|^2\neq0.
\]
The identity $\iota_Y\eta_\alpha=0$ holds identically for a local
leafwise vector field $Y$, so Cartan's formula gives
$\mathcal L_Y\eta_\alpha=\iota_Yd\eta_\alpha$.
\end{proof}

The same observation applies to the quaternionic K\"ahler $4$-form on
$\Hh P^{n-1}$.  Although its restriction to four tangent vectors of a
projected trace leaf vanishes, its contraction by one nonzero leafwise
vector does not vanish identically.  Hence the Kraines form is not
basic for the projected trace foliation and does not furnish a
transverse $4$-plectic form.

\begin{corollary}\label{cor:q-noautomatic}
The standard $3$-Sasakian/quaternionic contact geometry of
$\Ss^{4n-1}$ does not automatically induce either a quaternionic contact
structure or a $4$-plectic structure on the leaf space of the
Poincar\'e trace foliation.
\end{corollary}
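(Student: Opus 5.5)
The corollary has two ingredients, and I will establish them in turn: first that the pieces of quaternionic contact and $4$-plectic geometry one would need are not basic for the trace foliation, and then that no other automatic mechanism produces them.

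\textbf{Step 1 (non-basic forms).} Proposition~\ref{prop:q-not-basic} already gives the obstruction for the contact forms. For every nonzero leafwise vector $Y$ we have $\iota_Y d\eta_\alpha\neq0$. Hence each $\eta_\alpha$, and each $d\eta_\alpha$, fails to be basic. By Theorem~\ref{thm:q-qc-isotropic} the leaves are horizontal, so the natural candidate is the quotient $\mathcal H/T\widehat{\mathcal L}$. The triple $(\eta_\alpha)$ can induce a quaternionic contact structure on the leaf space only if:
\begin{enumerate}
\item the distribution $\mathcal H$ is invariant under the leafwise flows;
\item the conformal class of the triple $d\eta_\alpha|_{\mathcal H}$ is preserved modulo the leaf directions.
\end{enumerate}

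\textbf{Step 2 (the contact distribution).} For the first condition I will argue as follows. Since $\mathcal L_Y\eta_\alpha=\iota_Yd\eta_\alpha$ is a nonzero $1$-form, I evaluate it on a horizontal $Z$. The pairing $\langle Y,I_\alpha Z\rangle$ need not vanish; the choice $Z=-I_\alpha Y$ in Proposition~\ref{prop:q-not-basic} shows this. Therefore $\mathcal L_Y\eta_\alpha$ is not a combination of the $\eta_\beta$ alone. It follows that the flow of $Y$ does not preserve $\mathcal H=\bigcap\ker\eta_\alpha$. Consequently $\mathcal H$ does not project to a well-defined distribution on the leaf space. This is the key point, and I expect it to be the main obstacle. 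One must rule out that some \emph{other} rescaled triple $f_{\alpha\beta}\eta_\beta$ becomes basic. I would handle this by noting the following: any form annihilating the same distribution $\mathcal H$ has the same non-invariance of its kernel, because invariance of the kernel is independent of the choice of defining forms.

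\textbf{Step 3 ($4$-plectic structure).} Here I use the remark after Proposition~\ref{prop:q-not-basic}. Pull the Kraines form back through $\pi$. Locally it equals $\sum\omega_\alpha\wedge\omega_\alpha$, and its horizontal pullback is a multiple of $\sum d\eta_\alpha\wedge d\eta_\alpha$. Contracting with a leafwise $Y$ gives $2\sum(\iota_Y d\eta_\alpha)\wedge d\eta_\alpha$. To see this is nonzero, evaluate it on $Z=-I_1Y$ together with a pair $(W,I_1W)$ chosen orthogonal to the quaternionic line of $Y$. Then only the $\alpha=1$ term survives, and it is proportional to $|Y|^2|W|^2\neq0$; such a $W$ exists when $n\ge3$, which holds in the LVMQ range.

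\textbf{Conclusion.} The Kraines form is therefore not basic, and there is no induced transverse $4$-plectic form. Combining the two steps yields the statement. The word ``automatically'' is interpreted as ``by descent of the standard structures,'' and that is exactly what Steps 1--3 exclude.
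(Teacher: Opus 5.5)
Your argument is correct and follows the paper's route. The paper deduces the corollary from Proposition~\ref{prop:q-not-basic} ($\iota_Y d\eta_\alpha\neq0$, so the contact forms are not basic) and from the remark that contracting the Kraines form with a nonzero leafwise vector gives a nonzero form; your Step~2 (non-invariance of $\mathcal H$ itself, so no rescaling of the $\eta_\alpha$ helps) and your explicit evaluation on $(Y,-I_1Y,W,I_1W)$ fill in details the paper leaves implicit. The restriction $n\ge3$ is unnecessary, and the Poincar\'e setting does not assume the LVMQ range anyway: the Kraines form on $\Hh P^{n-1}$ is $4$-plectic and $d\pi$ is injective on horizontal vectors, so $\iota_Y\pi^\ast\Omega_{\mathrm K}=\pi^\ast(\iota_{d\pi Y}\Omega_{\mathrm K})\neq0$ for every $n\ge2$.
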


Thus the standard contact forms do not define a transverse
$3$-Sasakian structure.  The trace leaves are horizontal and
simultaneously isotropic for the three fundamental $2$-forms.

\begin{problem}
Determine whether the regular principal leaf space
\[
\Rr^{n-4m}\times(\Ss^{3})^n
\]
admits a natural $4$-plectic form constructed from the weight
configuration and the quaternionic phase factors, and determine
whether such a form can be extended compatibly across the lower
support strata.
\end{problem}

\begin{problem}
Study the projected trace leaves in $\Hh P^{n-1}$ as totally real
submanifolds.  In particular, determine their second fundamental form,
minimality properties, and the configurations for which some of these
leaves are totally geodesic or minimal.
\end{problem}

\subsection{A quaternionic rank-one distribution}
\label{subsec:genuine-q-rank-one}

The preceding trace foliation has Euclidean leaves because it comes from
an abelian real vector group.  A rank-one quaternionic distribution is
defined differently.

Let $U\subset\Hh^n$ be open, $n\ge2$, and let $X$ be a smooth vector
field on $U$.  Using the standard trivialization
$T\Hh^n\simeq\Hh^n\times\Hh^n$, regard $X$ as a smooth map
$U\to\Hh^n$.  At a point where $X(q)\neq0$, set
\[
\mathcal D_X(q)=X(q)\Hh
=\{X(q)a:a\in\Hh\}
=\operatorname{span}_{\Rr}\{X(q),X(q)i,X(q)j,X(q)k\}.
\]
This is a real four-plane distribution.  Only when $\mathcal D_X$ is
involutive does it define a four-dimensional foliation.  If, in addition,
its leaves are transverse to a sphere, the induced trace leaves have real
dimension three.

The basic example is the Euler field
\[
E(q_1,\ldots,q_n)=(q_1,\ldots,q_n).
\]

In this rank-one discussion the Hopf quotient is the usual
\emph{right}-projective quaternionic space
\[
(\Hh^n\setminus\{0\})/\Hh^\ast ,
\qquad q\sim qa.
\]
It is diffeomorphic to the left-projective convention used earlier for
the LVMQ quotient.  We denote both models by $\Hh P^{n-1}$, but the
side of the scalar action will be stated when it matters.

\begin{theorem}[Quaternionic Hopf trace foliation]
\label{thm:hopf-trace}
On $\Hh^n\setminus\{0\}$ the distribution
\[
\mathcal D_E(q)=q\Hh
\]
is integrable.  Its leaves are the punctured right quaternionic lines
$q\Hh^\ast$.  Their intersections with $\Ss^{4n-1}$ are
\[
q\Sp(1)\cong \Ss^{3}.
\]
Consequently
\[
\Ss^{3}\longrightarrow \Ss^{4n-1}\longrightarrow\Hh P^{n-1}
\]
is exactly the trace foliation.  It is regular on the sphere; there are
no singular spherical leaves.  In $\Hh^n$ the origin is the unique
singular leaf.
\end{theorem}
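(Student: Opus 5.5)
The plan is to verify integrability directly by exhibiting the leaves, rather than by computing brackets. At each $q\neq0$ the map $\Hh\to\Hh^n$, $a\mapsto qa$, is injective: if $qa=0$ with $a\neq0$, then $q=0$. Hence $\mathcal D_E(q)=q\Hh$ is a real four-plane and the distribution is smooth of constant rank on $\Hh^n\setminus\{0\}$.

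For each $q\neq0$, the punctured right line $L_q=q\Hh^\ast$ is the image of the embedding $\Hh^\ast\to\Hh^n\setminus\{0\}$, $a\mapsto qa$. This map is an injective immersion. It is also proper into $\Hh^n\setminus\{0\}$, because $\|qa\|=\|q\||a|$. Its tangent space at $qa$ is $q\Hh=(qa)\Hh$, since $a$ is invertible. So each $L_q$ is an integral manifold of $\mathcal D_E$ through $q$. These lines partition $\Hh^n\setminus\{0\}$ because $q'\in q\Hh^\ast$ is an equivalence relation. This gives integrability and identifies the leaves. As a sanity check one can also note that $\mathcal D_E$ is spanned by the linear fields $q\mapsto qa$ for $a\in\{1,i,j,k\}$, and that $[q\mapsto qa,\ q\mapsto qb]=\pm(q\mapsto q[a,b])$ again lies in $q\Hh$. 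This is the Frobenius condition, and it mirrors Proposition~\ref{prop:commuting}.

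Next, intersect with the unit sphere. Since $\|qa\|=|a|$ for $\|q\|=1$, we get $L_q\cap\Ss^{4n-1}=q\Sp(1)$, which is diffeomorphic to $\Ss^{3}$ via $a\mapsto qa$. Transversality follows because the Euler direction $q\in q\Hh$ is radial: $d(\|\cdot\|^2)_q(q)=2\|q\|^2>0$. So the trace of the foliation on the sphere is a regular three-dimensional foliation whose leaves are exactly the orbits of the free right $\Sp(1)$ action. Its leaf space is $\Ss^{4n-1}/\Sp(1)=(\Hh^n\setminus\{0\})/\Hh^\ast=\Hh P^{n-1}$, using $\Hh^\ast=\Rr_{>0}\times\Sp(1)$ in right-projective form, and the quotient map is the Hopf fibration.

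Finally, at $0$ we have $E(0)=0$, so $\{0\}$ is a zero-dimensional orbit and the only singular leaf in $\Hh^n$; no point of the sphere is singular. I expect no real obstacle here. The only point needing care is the side of multiplication: it must be right multiplication, so that $(qa)\Hh=q\Hh$ and the tangent plane is preserved along the leaf. This is consistent with the right-projective convention stated just before the theorem.
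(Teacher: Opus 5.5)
Your proof is correct and takes essentially the same route as the paper. The paper simply observes that the leaves are the orbits of the free right action of $\Hh^\ast$ on $\Hh^n\setminus\{0\}$, and that restricting to the unit sphere replaces $\Hh^\ast\simeq\Rr_{>0}\times\Sp(1)$ by $\Sp(1)$; your additional checks (the identity $(qa)\Hh=q\Hh$, properness via $\|qa\|=\|q\||a|$, radial transversality of the Euler direction, and the bracket $q(ab-ba)$) supply details the paper leaves implicit.
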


\begin{proof}
Right multiplication by $\Hh^\ast$ is a free action on
$\Hh^n\setminus\{0\}$, and its orbits are the punctured right
quaternionic lines.  Restriction to the unit sphere replaces
$\Hh^\ast\simeq\Rr_{>0}\times\Sp(1)$ by $\Sp(1)$, giving the standard
quaternionic Hopf fibration.
\end{proof}

\subsubsection*{The quotient and trivial holonomy}

Every nonzero point can be written as
\[
q=r\,u,\qquad r=\|q\|>0,\quad u\in \Ss^{4n-1}.
\]
For a fixed $q\neq0$, its leaf is
\[
L_q=q\Hh^\ast
\cong\Hh^\ast
\cong\Rr_{>0}\times \Ss^{3}.
\]
For every radius $\rho>0$,
\[
L_q\cap \Ss^{4n-1}_\rho
=
\left\{
q a:\ |a|=\frac{\rho}{\|q\|}
\right\}
\cong \Ss^{3}.
\]
Thus every leaf meets every centered sphere transversely in exactly one
right $\Ss^{3}$-orbit.  In the right-projective convention, the two
quotient spaces are
\[
(\Hh^n\setminus\{0\})/\Hh^\ast
\cong
\Hh P^{n-1},
\qquad
\Ss^{4n-1}/\Sp(1)\cong\Hh P^{n-1}.
\]

\begin{corollary}[Trivial holonomy in the linear Hopf model]
\label{cor:trivial-holonomy-hopf}
Every leaf $q\Hh^\ast$ is closed in $\Hh^n\setminus\{0\}$, and every
trace leaf $q\Sp(1)$ is a compact embedded $3$-sphere.  The trace
foliation is the foliation by the fibers of the smooth principal bundle
\[
\Sp(1)\longrightarrow \Ss^{4n-1}\longrightarrow\Hh P^{n-1}.
\]
In particular, the trace leaves have trivial holonomy, the leaf space is
Hausdorff, and distinct trace leaves do not accumulate on one another.
\end{corollary}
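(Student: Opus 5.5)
The plan is to reduce every assertion to the single fact that right multiplication by $\Hh^\ast$ on $\Hh^n\setminus\{0\}$ is a free, proper, smooth action. This is already implicit in the proof of Theorem~\ref{thm:hopf-trace}.

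First I will check closedness of the leaves. The map $\Hh^\ast\to\Hh^n\setminus\{0\}$, $a\mapsto qa$, is the restriction of a real-linear injective map $\Hh\to\Hh^n$ (injective because $q\neq0$). Its image $q\Hh$ is a closed real $4$-dimensional subspace of $\Hh^n$, so $q\Hh^\ast=q\Hh\setminus\{0\}$ is closed in $\Hh^n\setminus\{0\}$. Intersecting with the unit sphere gives $q\Sp(1)=q\Hh\cap\Ss^{4n-1}$, which is the unit sphere of a $4$-dimensional linear subspace. It is therefore a compact embedded round $3$-sphere, as the displayed computation $L_q\cap\Ss^{4n-1}_\rho\cong\Ss^3$ already shows.

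Second, I will identify the trace foliation with a principal bundle. The right $\Sp(1)$-action on $\Ss^{4n-1}$ is free, since $qa=q$ with $q\ne0$ forces $a=1$ by cancellation in the division algebra $\Hh$. It is smooth and the group is compact, so the action is proper. By the standard quotient manifold theorem for free proper actions, $\Ss^{4n-1}\to\Ss^{4n-1}/\Sp(1)$ is a smooth principal $\Sp(1)$-bundle. Its base is identified with $\Hh P^{n-1}$ in the right-projective convention, because every $\Hh^\ast$-orbit meets the unit sphere in exactly one $\Sp(1)$-orbit. If one wants an explicit argument instead of citing the theorem, local trivializations come from the standard charts $\{q_k\neq0\}$. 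On that chart the map $q\mapsto (q\,q_k^{-1}|q_k|,\;q_k/|q_k|)$ gives a local section-type splitting, and this will be written out only briefly. By Theorem~\ref{thm:hopf-trace}, the fibers of this bundle are exactly the trace leaves.

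Third, the remaining assertions follow from the bundle structure. The leaf space is the base of a fiber bundle, hence Hausdorff. Holonomy is trivial because, near any fiber, the foliation is the product foliation $W\times\Ss^3$ with $W$ a chart neighborhood in $\Hh P^{n-1}$, so every transversal $W\times\{pt\}$ is returned to itself by the identity along any loop in the leaf. Finally, distinct fibers of a bundle with compact fibers and Hausdorff base have disjoint saturated neighborhoods, so no leaf accumulates on another. The only step requiring any care is the local triviality in the second step, and the quotient manifold theorem handles it cleanly.
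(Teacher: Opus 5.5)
Your proof is correct and follows the paper's argument. The paper uses the same three ingredients: $q\Hh$ is a closed real $4$-plane, its intersection with the sphere is the compact orbit $q\Sp(1)$, and the right $\Sp(1)$ action is free and proper, so the quotient is a smooth Hausdorff principal bundle. The only minor difference is that you derive trivial holonomy from the local product structure $W\times\Ss^3$ of the bundle, whereas the paper deduces it afterwards from $\pi_1(\Ss^3)=0$; both arguments are valid.
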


\begin{proof}
The quaternionic line $q\Hh$ is a closed real $4$-plane in $\Hh^n$, so
$q\Hh^\ast$ is closed in the punctured space.  On the sphere its
intersection is the compact orbit $q\Sp(1)$.  The right $\Sp(1)$ action
is free and proper, hence its quotient is a smooth Hausdorff manifold.
\end{proof}

Moreover
\[
\pi_1(\Ss^{3})=0,
\]
so the Hopf leaves have trivial holonomy.  The contrast with the complex
rank-one model is
\[
\Cc^\ast\cong\Rr_{>0}\times \Ss^{1},
\qquad
\Hh^\ast\cong\Rr_{>0}\times \Ss^{3}.
\]
The circle direction which can support winding is replaced by a simply
connected compact factor.

\subsubsection*{Hopf quotients and the Calabi--Eckmann analogy}

The scalar rank-one model also contains the usual compact Hopf quotient.
Fix $0<\rho<1$ and let
\[
\gamma_\rho(q)=\rho q
\]
on $\Hh^n\setminus\{0\}$.  The cyclic group generated by $\gamma_\rho$
acts freely and properly discontinuously.

\begin{proposition}\label{prop:cyclic-hopf-quotient}
For every $n\ge1$ and every $0<\rho<1$,
\[
(\Hh^n\setminus\{0\})/\langle\gamma_\rho\rangle
\cong
\Ss^{4n-1}\times \Ss^1 .
\]
\end{proposition}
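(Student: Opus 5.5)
The plan is to reduce everything to polar coordinates on $\Hh^n\setminus\{0\}$, in which $\gamma_\rho$ acts only on the radial factor. Writing $q=r\,u$ with $r=\|q\|>0$ and $u\in\Ss^{4n-1}$, the map
\[
\Hh^n\setminus\{0\}\longrightarrow \Ss^{4n-1}\times\Rr,
\qquad q\longmapsto\bigl(q/\|q\|,\ \log\|q\|\bigr),
\]
is a diffeomorphism with inverse $(u,t)\mapsto e^tu$. This is the real-scalar special case of the conical model of Theorem~\ref{thm:q-cone}: take all $\lambda_j$ equal, so the flow is the Euler dilation.

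In these coordinates $\gamma_\rho$ becomes $(u,t)\mapsto(u,t+\log\rho)$. Since $\log\rho<0$, this is a nonzero translation of the $\Rr$ factor that leaves $u$ fixed. The generated group is therefore $\Zz$ acting by translations through $\log\rho\,\Zz$ on $\Rr$ and trivially on the sphere. This immediately gives the freeness and proper discontinuity already asserted before the proposition. It follows that
\[
(\Hh^n\setminus\{0\})/\langle\gamma_\rho\rangle\cong \Ss^{4n-1}\times\bigl(\Rr/\log\rho\,\Zz\bigr)\cong\Ss^{4n-1}\times\Ss^1 .
\]
The identification is a diffeomorphism because the quotient map of a free, properly discontinuous smooth action is a smooth covering, and the product decomposition is equivariant.

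I do not expect a real obstacle. The only points needing care are that $\gamma_\rho$ commutes with the polar decomposition, which holds because $\rho$ is a positive real scalar, and the bookkeeping that the smooth structure on the quotient is the product one. The case $n=1$ needs no separate treatment: it gives $\Hh^\ast/\langle\gamma_\rho\rangle\cong\Ss^3\times\Ss^1$ by the same argument.
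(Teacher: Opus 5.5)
Your proposal is correct and matches the paper's proof: the paper also writes $q=ru$, passes to $t=\log r$, observes that $\gamma_\rho$ acts by $(u,t)\mapsto(u,t+\log\rho)$, and reads off the quotient $\Ss^{4n-1}\times\bigl(\Rr/(\log\rho)\Zz\bigr)\cong\Ss^{4n-1}\times\Ss^1$. Your extra remarks on smoothness of the quotient and on the case $n=1$ are fine. The appeal to Theorem~\ref{thm:q-cone} is not needed, since the polar diffeomorphism is elementary.
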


\begin{proof}
Write $q=r u$ with $r>0$ and $u\in\Ss^{4n-1}$.  Under the logarithmic
coordinate $t=\log r$, the generator acts by
\[
(u,t)\longmapsto (u,t+\log\rho).
\]
Thus the quotient is
\[
\Ss^{4n-1}\times
\bigl(\Rr/(\log\rho)\Zz\bigr)
\cong
\Ss^{4n-1}\times\Ss^1 .
\]
\end{proof}

For $n=2$ this gives $\Ss^7\times\Ss^1$, the underlying smooth manifold
on which Angella and Bisi construct slice-quaternionic Hopf surfaces
\cite{AngellaBisi}.  It is therefore natural to compare the scalar
cyclic quotient above with their slice-quaternionic structures.

This compact Hopf quotient belongs to the rank-one Poincar\'e side rather
than to the compact LVMQ quotients themselves.  Indeed, every LVMQ
manifold is $2$-connected, whereas the cyclic Hopf quotient has fundamental
group $\Zz$.

There is a complementary Calabi--Eckmann phenomenon on the Siegel side.
The sphere-product family of Theorem~\ref{thm:einstein-family} gives
\[
N_{\Hh}(\Lambda)
\cong
\Ss^{4d+3}\times(\Ss^3)^{4m-1}.
\]
These examples are the quaternionic counterparts, at the level of the
LVM construction and of the resulting sphere-product topology, of the
classical Calabi--Eckmann examples occurring in complex LVM theory.
They suggest the further question whether some of these LVMQ sphere
products carry natural slice-quaternionic or hypercomplex structures
compatible with the construction.

\subsubsection*{Linear Frobenius rigidity in every quaternionic dimension}

The linear Frobenius problem can be solved completely in the
isolated-singularity case.  We use the convention that
$A\in M_n(\Hh)$ acts on column vectors from the left; then
$A(qa)=(Aq)a$, so $A$ is right-$\Hh$-linear.

\begin{theorem}[Linear Frobenius rigidity]
\label{thm:linear-frobenius-rigidity}
Let
\[
X(q)=Aq,\qquad A\in GL(n,\Hh),\qquad n\ge2,
\]
and let $\mathcal D_X(q)=Aq\,\Hh$.  Then $\mathcal D_X$ is involutive on
$\Hh^n\setminus\{0\}$ if and only if
\[
A=\mu I,
\qquad \mu\in\Rr^\ast.
\]
Hence every right-$\Hh$-linear isolated-singularity model with an
involutive quaternionic rank-one distribution has on $\Ss^{4n-1}$
exactly the Hopf foliation by $\Ss^{3}$'s.
\end{theorem}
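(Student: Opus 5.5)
The plan is to convert involutivity of $\mathcal D_X$ into a pointwise algebraic condition on $A$ and then solve that condition by linear algebra over $\Hh$. The ``if'' direction is immediate. For $A=\mu I$ with $\mu\in\Rr^\ast$ we have $Aq\,\Hh=q\Hh$, so $\mathcal D_X=\mathcal D_E$. This distribution is integrable by Theorem~\ref{thm:hopf-trace}, and its spherical trace is the Hopf fibration, which gives the last sentence of the statement. All the work is in the converse.

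\emph{Step 1: a generating frame and its brackets.} For constant $a\in\Hh$ put $Y_a(q)=Aqa$. Since $A$ is invertible, the four fields $Y_1,Y_i,Y_j,Y_k$ form a global frame of $\mathcal D_X$ on $\Hh^n\setminus\{0\}$. By Frobenius, involutivity is therefore equivalent to $[Y_a,Y_b](q)\in Aq\,\Hh$ for all $a,b$ and all $q\neq0$; brackets of general sections $\sum_a f_aY_a$ add only terms already lying in $\mathcal D_X$. Each $Y_a$ is linear, with $DY_b(q)h=Ahb$. Using the same convention for brackets of linear fields as in Proposition~\ref{prop:real-commuting}, this gives
\[
[Y_a,Y_b](q)=A(Aqa)b-A(Aqb)a=A^2q\,(ab-ba),
\]
because $A$ commutes with right multiplication.

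\emph{Step 2: reduction to an eigenvector condition.} Put $v=Aq$, which ranges over all of $\Hh^n\setminus\{0\}$. The commutators $ab-ba$ span $\operatorname{Im}\Hh$, so the condition becomes $A(vc)\in v\Hh$ for every $v\neq0$ and every imaginary $c$. Since $vc\,\Hh=v\Hh$ for $c\neq0$, and every $w\neq0$ can be written as $w=vc$ with $v=wc^{-1}$, this says exactly that
\[
Aw\in w\Hh\quad\text{for all } w\neq0,
\]
that is, every nonzero vector is a right eigenvector of $A$.

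\emph{Step 3: solving the eigenvector condition.} This is the main step, and it is where $n\ge2$ and noncommutativity enter. Applying the condition to the basis vectors $e_k$ gives $Ae_k=e_k\lambda_k$. Right-linearity then yields $(Aq)_k=\lambda_kq_k$ coordinatewise.

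Next apply the condition to $e_1+e_k$. The vector $A(e_1+e_k)$ has coordinates $\lambda_1$ and $\lambda_k$, and it must be of the form $(e_1+e_k)\lambda$ for a single $\lambda$. Hence $\lambda_1=\lambda_k$, and so $A=\lambda I$ acting by left multiplication.

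Finally apply the condition to $w=e_1+e_2u$ with $u\in\Hh^\ast$ arbitrary. We need $\lambda=\nu$ and $\lambda u=u\nu$ for some $\nu$, which forces $\lambda u=u\lambda$ for all $u$. Thus $\lambda$ is central, i.e.\ $\lambda=\mu\in\Rr$, and $\mu\neq0$ because $A$ is invertible. This step uses the second coordinate, so it requires $n\ge2$.

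The main point to be careful about is exactly this last step. A tempting shortcut would be to say that a left scalar $\mu\in\Hh^\ast$ maps quaternionic lines to quaternionic lines and so should also preserve integrability. That is wrong: the distribution at the point $\mu q$ is $\mu^2q\,\Hh$, not $\mu q\,\Hh$, and the requirement that the eigenvalue be consistent across different coordinates is what kills the imaginary part of $\mu$.
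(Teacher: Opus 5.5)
Your proof is correct and follows essentially the same route as the paper. The bracket formula $[Y_a,Y_b](q)=A^2q\,(ab-ba)$ reduces involutivity to $Ap\in p\,\Hh$ for every $p\neq0$, and testing this on $e_r$, $e_r+e_s$ and $e_1+e_2h$ forces $A=\mu I$ with $\mu$ central, hence real; your explicit frame argument and your remark on non-real left scalars are sound but add nothing the paper's argument needs.
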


\begin{proof}
For $a\in\Hh$ put $X_a(q)=Aq\,a$.  Since $A$ is right-$\Hh$-linear,
\[
[X_a,X_b](q)=A^2q\,(ab-ba)
\]
up to the sign convention for the Lie bracket.  Frobenius integrability
therefore implies
\[
A^2q\,c\in Aq\,\Hh
\]
for every $q$ and every purely imaginary quaternion $c$ which occurs as
a commutator.  Choose one nonzero such $c$.  Since both $A$ and $c$ are
invertible, putting $p=Aq$ gives
\[
Ap\in p\Hh
\qquad\text{for every }p\neq0.
\]
Thus $A$ preserves every right quaternionic line.

Write
\[
Ae_r=e_r\alpha_r
\]
for the standard basis.  Applying the preceding property to $e_r+e_s$
shows that $\alpha_r=\alpha_s$ for all $r,s$; let their common value be
$\alpha$.  Applying it to $e_1+e_2h$, with arbitrary $h\in\Hh$, gives
\[
\alpha h=h\alpha
\qquad\text{for every }h\in\Hh.
\]
Therefore $\alpha$ belongs to the center of $\Hh$, namely $\Rr$, and
$A=\mu I$.  Conversely, for $A=\mu I$ the distribution is $q\Hh$ and
is the orbit distribution of right multiplication by $\Hh^\ast$.
\end{proof}

Thus a weighted diagonal field with unequal real weights does not give
a singular quaternionic foliation on the whole sphere: on the principal
stratum its four-plane field fails Frobenius.  Coordinate-axis
$\Ss^{3}$'s may still be integral submanifolds of the restricted
distribution, but they are not singular leaves of a global foliation
which does not exist.

\subsubsection*{Several quaternionic vector fields}

Let $k\ge2$ and let
\[
X_1,\ldots,X_k
\]
be smooth vector fields on an open subset of $\Hh^n$, and define
\[
\mathcal D(q)
=
X_1(q)\Hh+\cdots+X_k(q)\Hh.
\]
On the locus where the vectors $X_1(q),\ldots,X_k(q)$ are
right-$\Hh$-linearly independent, $\mathcal D$ has real rank $4k$.
For $k>1$ the involutivity of $\mathcal D$ is weaker than the
involutivity of each individual rank-four distribution
$X_\alpha\Hh$.  The one-field rigidity theorem therefore does not apply component by
component.

For the linear family
\[
X_\alpha(q)=A_\alpha q,
\qquad
A_\alpha\in M_n(\Hh),
\qquad
1\le\alpha\le k,
\]
put
\[
X_{\alpha,a}(q)=A_\alpha q\,a,
\qquad a\in\Hh.
\]
A direct computation gives
\[
[X_{\alpha,a},X_{\beta,b}](q)
=
A_\beta A_\alpha q\,(ab)
-
A_\alpha A_\beta q\,(ba),
\]
up to the overall sign convention for the Lie bracket.

\begin{proposition}[Frobenius criterion for a linear quaternionic family]
\label{prop:multi-frobenius}
On a constant-rank stratum, the distribution
\[
\mathcal D(q)
=
\sum_{\alpha=1}^k A_\alpha q\,\Hh
\]
is involutive if and only if, for every $\alpha,\beta$, every
$a,b\in\Hh$, and every $q$ in that stratum,
\[
A_\beta A_\alpha q\,(ab)
-
A_\alpha A_\beta q\,(ba)
\in
\sum_{\gamma=1}^k A_\gamma q\,\Hh.
\]
\end{proposition}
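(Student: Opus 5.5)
On a constant-rank stratum $S$, the plan is to reduce involutivity to a statement about brackets of generating vector fields and then compute those brackets for the linear family.

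The first step is to fix a local frame for $\mathcal D$. On $S$ the real rank of $\mathcal D(q)$ is constant, so $\mathcal D|_S$ is a smooth distribution. It is spanned pointwise by the smooth vector fields
\[
X_{\alpha,a}(q)=A_\alpha q\,a,\qquad 1\le\alpha\le k,\quad a\in\{1,i,j,k\}.
\]
By constancy of rank we can extract a local frame from these finitely many fields near any point of $S$. We should also check, or take as part of the setup, that $S$ is a submanifold with $\mathcal D\subset TS$; otherwise "involutive on the stratum" has to be read pointwise in $T\Hh^n$. The paper's usage of constant-rank strata suggests we simply take the open principal stratum, where this is automatic.

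The second step is the standard reduction: a distribution spanned by a family of smooth generating fields is involutive if and only if the brackets of the generators lie in it. One direction is immediate. For the converse, a general section is $\sum f_{\alpha,a}X_{\alpha,a}$, and the Leibniz rule
\[
[fX,gY]=fg[X,Y]+f(Xg)Y-g(Yf)X
\]
shows that the extra terms lie in $\mathcal D$ automatically. By real bilinearity of $(a,b)\mapsto[X_{\alpha,a},X_{\beta,b}]$, it is enough to check all $a,b\in\Hh$, or equivalently the basis units.

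The third step computes the bracket of two linear fields. For $X(q)=Mq$ and $Y(q)=Nq$ with $M,N$ real-linear maps, we have $[X,Y](q)=DY(X)-DX(Y)=NMq-MNq$, up to the sign convention. Here $M(q)=A_\alpha q\,a$ and $N(q)=A_\beta q\,b$, and these compose as
\[
N(M(q))=A_\beta(A_\alpha q\,a)b=A_\beta A_\alpha q\,(ab).
\]
This uses right-$\Hh$-linearity of $A_\beta$, that is $A(qa)=(Aq)a$, and the analogous identity holds for $M(N(q))$. This reproduces the displayed bracket formula. Since $\mathcal D$ is a linear subspace at each point, the overall sign is irrelevant.

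Combining the three steps gives the criterion. The main subtlety I expect is the frame and smoothness issue on a stratum that is not open, namely whether the generators are tangent to $S$. I would handle it by stating the result for the regular open stratum and noting that the pointwise algebraic condition is exactly what Frobenius requires there.
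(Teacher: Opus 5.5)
Your proposal is correct and follows the paper's proof. As there, the fields $X_{\alpha,a}$ span $\mathcal D$ over $\Rr$, Frobenius involutivity reduces to the brackets of these generators, and the linear bracket computation $[X_{\alpha,a},X_{\beta,b}](q)=A_\beta A_\alpha q\,(ab)-A_\alpha A_\beta q\,(ba)$ gives the stated condition. Your Leibniz-rule justification and your caveat about non-open strata fill in details the paper leaves implicit: on such strata tangency of the generators must be assumed, or one restricts to the open maximal-rank stratum.
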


\begin{proof}
The fields $X_{\alpha,a}$ span $\mathcal D$ over $\Rr$.  Frobenius'
criterion is therefore exactly the requirement that all their pairwise
brackets lie in the same distribution.  The displayed bracket formula
gives the stated condition.
\end{proof}

Even when $\mathcal D$ is involutive, one need not have
\[
[A_\alpha q\,\Hh,A_\alpha q\,\Hh]
\subset A_\alpha q\,\Hh
\]
for each $\alpha$ separately: the bracket may close only after the
other quaternionic directions are included.  Thus the conclusion
$A_\alpha=\mu_\alpha I$ is not valid for a general multi-field system.

\begin{proposition}[An obstruction to a regular linear multi-field model]
\label{prop:multi-invertible-obstruction}
Assume $k\ge2$ and that one of the matrices, say $A_1$, is invertible.
Then the quaternionic rank of
\[
\{A_1q,\ldots,A_kq\}
\]
cannot equal $k$ for every $q\neq0$.
\end{proposition}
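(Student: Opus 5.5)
The plan is to reduce to a single quaternionic matrix and then exhibit one nonzero point where the rank drops. The key input is that every right-$\Hh$-linear endomorphism of $\Hh^n$ has a right eigenvector. This is elementary once we restrict scalars to $\Cc$.

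\emph{Step 1: reduction.} With the convention of Theorem~\ref{thm:linear-frobenius-rigidity}, every $A\in M_n(\Hh)$ acts on the left and is right-$\Hh$-linear. So for invertible $A_1$, the map $v\mapsto A_1^{-1}v$ is a right-$\Hh$-linear automorphism of $\Hh^n$. It therefore preserves the quaternionic rank of any finite family of vectors. Put $B_\alpha=A_1^{-1}A_\alpha$. Then for every $q$ the family $\{A_1q,\ldots,A_kq\}$ has the same quaternionic rank as
\[
\{q,\;B_2q,\;\ldots,\;B_kq\}.
\]
Since $k\ge2$, it suffices to find a single $q\neq0$ for which $q$ and $B_2q$ are right-$\Hh$-linearly dependent. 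That is, we need $B_2q\in q\Hh$: then $B_2q=q\lambda$ gives the nontrivial relation $q\lambda-B_2q=0$, so the whole family has rank $<k$.

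\emph{Step 2: a right eigenvector.} Regard $\Hh^n$ as a complex vector space of dimension $2n$, using right multiplication by the complex slice $\Cc=\Cc_i=\{a+bi\}\subset\Hh$. Because $B_2$ commutes with right multiplication by every quaternion, it commutes in particular with right multiplication by $i$. Hence $B_2$ is a $\Cc$-linear endomorphism of $\Cc^{2n}$. Over the algebraically closed field $\Cc$, its characteristic polynomial has a root $z$, with an eigenvector $q\neq0$ satisfying $B_2q=qz$. Thus $B_2q\in q\Hh$, and Step 1 gives the conclusion. The degenerate case $B_2q=0$ is included, with $z=0$.

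\emph{The obstacle.} The only point needing care is the side bookkeeping. The complex structure must be taken from \emph{right} scalars, so that it commutes with the left matrix action. The quaternionic rank must also be the right rank, matching the definition of $\mathcal D(q)=\sum_\alpha A_\alpha q\,\Hh$. With the conventions fixed as in Theorem~\ref{thm:linear-frobenius-rigidity}, both are consistent, and no Frobenius or bracket computation is needed.

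The argument in fact shows more: the locus where the rank drops contains the whole punctured right line $q\Hh^\ast$ through any right eigenvector of $A_1^{-1}A_2$. Hence a regular rank-$k$ linear model with an invertible member cannot exist on $\Hh^n\setminus\{0\}$.
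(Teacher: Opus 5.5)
Your argument is correct and is essentially the paper's proof. The paper normalizes by $B_\alpha=A_\alpha A_1^{-1}$ rather than $A_1^{-1}A_\alpha$, takes a right eigenvector $B_2p=p\lambda$, and sets $q=A_1^{-1}p$ to get $A_2q=A_1q\,\lambda$, which is the same relation as your $A_1^{-1}A_2q=qz$; the only other difference is that the paper cites the existence of right eigenvalues of quaternionic matrices, whereas you prove it directly by treating $B_2$ as a $\Cc$-linear map for the complex structure given by right multiplication by $\Cc_i$.
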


\begin{proof}
Set
\[
B_\alpha=A_\alpha A_1^{-1}.
\]
Then $B_1=I$.  Every quaternionic matrix has a right eigenvalue; see
\cite{ZhangQuaternionMatrices}.  Hence there exist $p\neq0$ and
$\lambda\in\Hh$ such that
\[
B_2p=p\lambda.
\]
With $q=A_1^{-1}p$ one obtains
\[
A_2q=A_1q\,\lambda.
\]
Thus the first two vectors in
$\{A_1q,\ldots,A_kq\}$ are right-$\Hh$-linearly dependent, and the
quaternionic rank is at most $k-1$ at that point.
\end{proof}

Thus the higher-rank problem is not a collection of independent
rank-one Hopf problems.  If the common singularity at the origin is
isolated but no individual $A_\alpha$ is invertible, regular systems are
not excluded by the preceding argument; their classification is a
different algebraic and foliation-theoretic problem.  In particular,
for several quaternionic vector fields there is at present no reason to
expect the small-sphere leaves to be $\Ss^{4k-1}$ or the quotient to be a
quaternionic projective space.

This leads to the following higher-rank problem:

\begin{problem}[Higher-rank quaternionic Poincar\'e problem]
Classify linear and nonlinear families
$X_1,\ldots,X_k$ for which
\[
\mathcal D=X_1\Hh+\cdots+X_k\Hh
\]
has constant rank $4k$ on a punctured neighborhood, is involutive, and
contains a direction transverse to sufficiently small centered spheres.
Describe the induced $(4k-1)$-dimensional trace foliation and its leaf
space.  Determine which additional hypotheses, if any, force compact
leaves or a fibration.
\end{problem}

\subsubsection*{Hopf-like fibrations in the quaternionic Poincar\'e domain}

For an integrable rank-one quaternionic distribution, the Poincar\'e
condition used here is intrinsic: there is a leafwise direction
which is strictly radial:
\[
d\|q\|^2(Y)>0
\]
in a punctured neighborhood of the singularity.  If
\[
X(0)=0,\qquad DX(0)=\mu I,\qquad \mu>0,
\]
this transversality follows automatically for $q$ sufficiently close to
the origin, since
\[
X(q)=\mu q+O(\|q\|^2)
\]
and hence
\[
d\|q\|^2(X(q))
=
2\mu\|q\|^2+O(\|q\|^3)>0.
\]

The trace on a sufficiently small sphere is a deformation of the
quaternionic Hopf fibration.

Its foliation-theoretic input is the stability theory of compact leaves
and fibrations due to Reeb and Haefliger and, in the perturbative
setting, Thurston, Langevin--Rosenberg, Epstein--Rosenberg and Schweitzer
\cite{ThurstonReeb,LangevinRosenberg,EpsteinRosenberg,Schweitzer};
see also Epstein and Edwards--Millett--Sullivan for compact foliations
\cite{EpsteinCompact,EMS}.

\begin{theorem}[Hopf-like fibrations in the quaternionic Poincar\'e domain]
\label{thm:hopf-like-poincare}
Let $X$ be a $C^2$ quaternionic vector field defined near
$0\in\Hh^n$, $n\ge2$, and assume
\[
X(0)=0,\qquad DX(0)=\mu I,\qquad \mu>0.
\]
Suppose that on a punctured neighborhood of the origin the distribution
\[
\mathcal D_X=X\Hh
\]
has constant real rank four and is involutive.

Then, for every sufficiently small $r>0$, $\mathcal D_X$ induces on
$\Ss_r^{4n-1}$ a regular three-dimensional foliation $\mathcal F_r$ which
is a sufficiently small integrable deformation of the Hopf foliation.
All leaves of $\mathcal F_r$ are compact and diffeomorphic to $\Ss^{3}$.
Their holonomy is trivial; here holonomy means the group of germs of
transverse return maps induced by loops in a leaf.  Consequently the leaf space
\[
B_r=\Ss_r^{4n-1}/\mathcal F_r
\]
is a compact Hausdorff smooth manifold of dimension $4n-4$, not an
orbifold, and
\[
\Ss^{3}\longrightarrow \Ss_r^{4n-1}\longrightarrow B_r
\]
is a smooth locally trivial fibration.

Moreover, the fibration supplied by the stability theorem has base
homotopy equivalent to the Hopf quotient:
\[
B_r\simeq \Hh P^{\,n-1}.
\]
In particular,
\[
H^\ast(B_r;\Zz)
\cong
\Zz[u]/(u^n),
\qquad |u|=4.
\]
\end{theorem}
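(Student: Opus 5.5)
Rescale first: put $X_r(v)=r^{-1}X(rv)$ on a fixed neighbourhood of the unit sphere $\Ss^{4n-1}$. Since $X$ is $C^2$ with $X(0)=0$ and $DX(0)=\mu I$, we get $X_r\to\mu E$ in the $C^1$ topology on compact subsets of $\Hh^n\setminus\{0\}$ as $r\to0$. Multiplying on the right by quaternions commutes with the rescaling, so $X_r\Hh$ is the dilate of $\mathcal D_X$. It is therefore still involutive, of rank four, and $C^1$-close to the Hopf distribution $q\Hh=\mathcal D_E$ of Theorem~\ref{thm:hopf-trace}. The Poincar\'e transversality computation given just before the statement, $d\|q\|^2(X)=2\mu\|q\|^2+O(\|q\|^3)>0$, shows that the four-plane field is transverse to the sphere for small $r$. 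Hence $\mathcal F_r:=\mathcal D_X\cap T\Ss_r^{4n-1}$ is a rank-three distribution, and it is integrable: the leaves of $\mathcal D_X$ cut the sphere transversely, so they trace out the leaves. After rescaling, $\mathcal F_r$ becomes a $C^0$-small (in fact $C^1$-small on plane fields) integrable deformation of the Hopf foliation of $\Ss^{4n-1}$. This proves the first assertion.

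Next I use stability. By Corollary~\ref{cor:trivial-holonomy-hopf}, the Hopf foliation has compact leaves $\Ss^3$ and trivial holonomy, because $\pi_1(\Ss^3)=0$ and also $H^1(\Ss^3;\Rr)=0$. I would apply the Reeb--Thurston stability theorem in its global, perturbative form (Langevin--Rosenberg, Epstein--Rosenberg): a compact foliation that fibres with simply connected compact fibres is stable under $C^1$-small integrable perturbations. The conclusion is a diffeomorphism $h_r\colon\Ss^{4n-1}\to\Ss_r^{4n-1}$, isotopic to the radial dilation and $C^0$-close to it, together with a smooth fibration whose fibres are the leaves of $\mathcal F_r$. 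Concretely, each leaf is a section over a Hopf fibre, namely the graph of a map into a tubular neighbourhood of that fibre.

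I expect this step to be the main obstacle. The cited theorems are usually stated leaf-by-leaf (Reeb local stability near one compact leaf), and the global statement needs uniformity. I would first establish it by hand. Fix a horizontal tubular model of the Hopf bundle. Each Hopf fibre $F$ has a neighbourhood $F\times D^{4n-4}$, and in this neighbourhood $\mathcal F_r$ is $C^1$-close to the horizontal foliation $\{F\times\{y\}\}$. Because $F$ is simply connected, the holonomy of $\mathcal F_r$ along any loop in $F$ is trivial. Leaves then lift as graphs over $F$ (this is the Reeb argument), uniformly over $F\in\Hh P^{n-1}$ by compactness. Having trivial holonomy and all leaves compact and diffeomorphic to $\Ss^3$ gives a Hausdorff leaf space $B_r$. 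The local transversals $D^{4n-4}$ serve as smooth charts, and transition maps are holonomy maps, hence smooth. So $B_r$ is a manifold, not an orbifold, of dimension $4n-1-3$. The projection is a submersion with compact fibres, so by Ehresmann it is a locally trivial fibration.

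Finally, the homotopy type. The long exact homotopy sequence of $\Ss^3\to\Ss^{4n-1}\to B_r$ gives $\pi_k(B_r)\cong\pi_{k-1}(\Ss^3)$ for $k<4n-1$, so $B_r$ is $3$-connected with $\pi_4(B_r)\cong\Zz$. For a homotopy equivalence I would use the $C^0$-closeness instead. Send a leaf $L$ to the Hopf fibre $F$ over which it is a graph; the tubular retraction onto the nearest Hopf fibre does this. This gives a continuous map $\phi\colon B_r\to\Hh P^{n-1}$ covered by a fibrewise degree-one map of $\Ss^3$-bundles whose total space map $\Ss_r^{4n-1}\to\Ss^{4n-1}$ is homotopic to the dilation. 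By the five lemma on homotopy sequences, $\phi$ is a weak equivalence, and Whitehead's theorem then makes it a homotopy equivalence of closed manifolds. The cohomology ring $\Zz[u]/(u^n)$ follows, and since $\phi^\ast$ pulls back the Euler class, $u$ corresponds to the Euler class of the new fibration.
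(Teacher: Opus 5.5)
Your outline matches the paper's proof up to the construction of the fibration. Both use the rescaling $X_r(v)=r^{-1}X(rv)$, $C^1$ convergence to $\mu E$, and radial transversality. Both invoke stability of the Hopf fibration, since its fibres are compact with $\pi_1=0$ and $H^1(\Ss^3;\Rr)=0$. The paper cites the compact-fibration stability theorems, and ``global compact foliation results'' for Hausdorffness. You instead propose to prove the uniform statement directly in tubular neighbourhoods $F\times D^{4n-4}$ of Hopf fibres.

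Be aware that $F$ is not a leaf of $\mathcal F_r$, so this is not literally Reeb's argument. What you are actually using is that $\mathcal F_r$ is a flat partial Ehresmann connection on $F\times D^{4n-4}\to F$. Its monodromy is trivial because $\pi_1(F)=0$, but only \emph{provided} the lifts of null-homotopies stay inside the tube. That requires two things: the $C^0$-smallness of the plane field, and a uniform bound on the lengths of loops in null-homotopies of bounded-length loops in $\Ss^3$. With these, the graph you construct is compact and open in the leaf, hence the whole leaf.

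The genuine difference is the homotopy type of $B_r$. The paper appeals to a ``continuation'' supplied by the stability theorem. You rightly note that the homotopy exact sequence alone does not determine the homotopy type. So you build a comparison map $\phi\colon B_r\to\Hh P^{n-1}$, covered by a fibrewise degree-one map, and finish with the five lemma and Whitehead.

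As written, however, $\phi$ is not well defined. A leaf is a graph over \emph{every} Hopf fibre in an $O(\epsilon)$-neighbourhood, and a nearest-point retraction acts on points, not on leaves. The fix is as follows:
\begin{itemize}
\item rescale to the unit sphere, and let $\phi(L)$ be the Riemannian centre of mass of $\pi(L)$, a set of diameter $O(\epsilon)$;
\item cover $\phi$ by nearest-point projection onto $\pi^{-1}(\phi(L))$, which restricts to a diffeomorphism on each leaf because $L$ is a $C^1$-small graph over that fibre.
\end{itemize}
With this choice your argument goes through.

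The paper's route is shorter but leaves this step to the literature; yours is self-contained. It also gives more: since the linearised transport along leaves is $C^1$-close to the identity, the same $\phi$ is a local diffeomorphism. It is therefore a diffeomorphism onto the simply connected $\Hh P^{n-1}$, which is stronger than the homotopy equivalence asserted. Compare the paper's remark leaving the smooth structure of $B_r$ open.
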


\begin{proof}
Rescale the field to the unit sphere by
\[
X_r(u)=r^{-1}X(ru),
\qquad u\in \Ss^{4n-1}.
\]
The hypothesis on the derivative gives
\[
X_r\longrightarrow \mu E
\]
in $C^1$ as $r\to0$.  Hence the induced trace distributions converge in
$C^1$ to the vertical distribution of the Hopf fibration
\[
\Ss^{3}\longrightarrow \Ss^{4n-1}\longrightarrow\Hh P^{n-1}.
\]
The radial estimate above gives transversality to the small spheres.

By hypothesis the perturbed distributions are integrable.  The fibers
of the limiting Hopf fibration are compact $\Ss^{3}$'s and have
\[
H_1(\Ss^{3};\Rr)=0
\]
and trivial holonomy.  The stability theory for compact leaves which are
fibers of a fibration therefore applies to sufficiently small $C^1$
integrable perturbations.  It follows that the nearby trace foliation is
again a compact foliation by leaves diffeomorphic to $\Ss^{3}$.

For each such leaf $L$, $\pi_1(L)=0$, hence its holonomy group is
trivial.  Reeb local stability then provides a saturated neighborhood
diffeomorphic, as a foliated space, to
\[
\Ss^{3}\times D^{4n-4},
\]
with the leaves $\Ss^{3}\times\{y\}$.  Therefore every point of the leaf
space has an ordinary smooth chart $D^{4n-4}$.  There is no finite
holonomy quotient and hence no orbifold isotropy.  The global compact
foliation results imply that the leaf space is Hausdorff; compactness of
the sphere makes it compact.  The quotient map is therefore a smooth
locally trivial $\Ss^{3}$-fibration.

The compact-fibration stability theorem gives a continuation from the
Hopf fibration to the perturbed fibration and a homotopy equivalence of
the bases,
\[
B_r\simeq\Hh P^{n-1}.
\]
The cohomology-ring statement follows either from this equivalence or
from the Serre spectral sequence of the $\Ss^{3}$-fibration.
\end{proof}

\begin{remark}[Smooth structure of the quotient]
\label{rem:exotic-base}
The theorem identifies the quotient as a smooth manifold and determines
its homotopy type.  A priori the smooth structure on $B_r$ need not be
the standard smooth structure on $\Hh P^{n-1}$.  The relation
\[
B_r\simeq\Hh P^{n-1}
\]
does not by itself give a diffeomorphism.  The smooth classification
of these bases is therefore a separate problem.
\end{remark}

\begin{problem}[Smooth rigidity of the quotient]
Determine whether the manifolds $B_r$ arising in
Theorem~\ref{thm:hopf-like-poincare} are necessarily
diffeomorphic to the standard $\Hh P^{n-1}$, or whether a nonstandard
smooth manifold with the same homotopy type can occur as the quotient
of an integrable nonlinear quaternionic Poincar\'e singularity.
\end{problem}

\begin{remark}
The statement concerns perturbations within the class of involutive
rank-four distributions.  Outside this class the distribution need not
define a foliation.
\end{remark}

\subsection{The Poincar\'e--Siegel dichotomy for the real action}

The two halves of the paper can now be summarized in a single
statement.

\begin{theorem}[Poincar\'e--Siegel dichotomy for the canonical real action]\label{thm:q-dichotomy}
Let
\[
\Lambda=(\lambda_1,\ldots,\lambda_n)
\subset\Hh^m\simeq\Rr^{4m}
\]
be of full affine rank, and let $\Phi$ be the real action
\eqref{eq:poincare-action}.

\begin{enumerate}[label=\textnormal{(\roman*)}]
\item If
\[
0\in\conv\Lambda
\]
and the weak hyperbolicity condition holds, then on the Siegel open
set every orbit has a unique point of minimum norm.  The normalized
minimum set is $Z_{\Hh}(\Lambda)$, and
\[
\cS_\Lambda
\cong
Z_{\Hh}(\Lambda)\times\Rr^{4m}\times\Rr_{>0}.
\]
After quaternionic projectivization the leaf space is the compact
LVMQ manifold $N_{\Hh}(\Lambda)$.

\item If
\[
0\notin\conv\Lambda,
\]
then there exists $\xi\in\Rr^{4m}$ for which the norm is strictly
increasing along every $\Rr\xi$ orbit.  Every sphere is a global
section for this one-dimensional subaction and
\[
\Hh^n\setminus\{0\}
\cong
\Ss_r^{4n-1}\times\Rr.
\]
The residual action on the sphere defines a singular trace foliation
whose leaf in the support stratum $I$ is $\Rr^{q(I)-1}$ and whose
stratum leaf space is
\[
\Rr^{|I|-q(I)}\times(\Ss^{3})^{|I|}.
\]
\end{enumerate}
\end{theorem}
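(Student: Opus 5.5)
The plan is to assemble the dichotomy from results already proved, treating (i) and (ii) separately. No new estimates should be needed; the only work is to check that hypotheses match and that the identifications are compatible.

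For (i), I would begin with Lemma~\ref{lem:interior}. Under the Siegel condition and weak hyperbolicity it places the origin in the interior of $\conv\{\lambda_i:q_i\neq0\}$ for every $q\in\cS_\Lambda$. Lemma~\ref{lem:proper} then gives strict convexity and properness of $f_q$, so each orbit has a unique norm minimizer. Lemma~\ref{lem:hessian} identifies the critical-point equation with $\sum_i\lambda_i|q_i|^2=0$. Normalizing by positive homotheties then gives exactly $Z_{\Hh}(\Lambda)$, and Theorem~\ref{thm:transversal} supplies the diffeomorphism $Z_{\Hh}(\Lambda)\times\Rr^{4m}\times\Rr_{>0}\cong\cS_\Lambda$. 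For the projectivized statement, the dilation action commutes with $\Hh^\ast=\Rr_{>0}\times\Sp(1)$ acting on the left. Dividing accordingly, as in Theorem~\ref{thm:projective}, yields $V_\Lambda/\Rr^{4m}\cong N_{\Hh}(\Lambda)$, and Theorem~\ref{thm:smooth} shows this quotient is compact. The one point I would state explicitly is that full affine rank is assumed, so Theorem~\ref{thm:smooth} applies.

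For (ii), strict separation of the compact convex set $\conv\Lambda$ from $0$ produces some $\xi\in C_\Lambda$. Theorem~\ref{thm:q-cone} then gives strict monotonicity of $g_q$ and the diffeomorphism $\Ss_r^{4n-1}\times\Rr\cong\Hh^n\setminus\{0\}$. Theorem~\ref{thm:q-trace-action} shows that the residual $H_\xi$-action is a trace action with leaves $\Rr^{q(I)-1}$ on the stratum $E_I^\ast$. Corollary~\ref{cor:qtrace-singular} gives the Stefan--Sussmann structure, and Theorem~\ref{thm:q-stratum-quotient} computes the stratum leaf space $\Rr^{|I|-q(I)}\times(\Ss^3)^{|I|}$.

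The only step needing care is the identification in (ii) of the trace leaves with the full intersections of ambient orbits with the sphere. Theorem~\ref{thm:q-trace-action} speaks of connected components, whereas the leaf-space formula uses whole orbits. I would resolve this through the conical model itself: $\Theta_r$ maps each punctured ambient orbit diffeomorphically onto (trace set)$\times\Rr$. Since the ambient orbit $\Rr^{q(I)}$ is connected, its intersection with the sphere is connected and equals a single trace orbit. Granting this, the statement is a direct concatenation of the cited results.
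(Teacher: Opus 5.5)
Your proposal is correct and matches the paper. The paper gives no separate proof and presents the theorem as a summary of earlier results: Theorems~\ref{thm:transversal}, \ref{thm:projective} and \ref{thm:smooth} for (i), and Theorems~\ref{thm:q-cone}, \ref{thm:q-trace-action} and \ref{thm:q-stratum-quotient} for (ii). Your extra check that the sphere trace of each ambient orbit is a single trace orbit is sound, and it also follows directly from uniqueness of the radial correction: any $\Phi_{U+c\xi}(q)$ lying on the sphere must have $c=\tau(U,q)$.
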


The boundary case, in which the origin lies on the boundary of the
convex hull, is the transition between these two behaviors and
contains nontransverse orbits.  We do not study that degenerate case
here.

\subsection{Relation with the complex higher-rank picture}

The construction above is directly suggested by the higher-rank
Poincar\'e theory for diagonal complex actions developed in \cite{ACSV}.  There, the Poincar\'e
condition likewise gives a separating direction, a conical
description, and a trace foliation on spheres.  The coordinate support
controls the dimensions of the leaves.  The decisive difference is
the isotropy.

For the complex exponential action, the equations
\[
e^{\langle\Lambda_j,T\rangle}=1
\]
allow nonzero periods in $2\pi i\Zz$; consequently leaves may contain
torus factors and their diffeomorphism types can depend on arithmetic
lattice intersections.  In the present quaternionic LVMQ setting the
action used in the convex construction has real exponents, so
\[
e^{\langle\lambda_j,T\rangle}=1
\quad\Longrightarrow\quad
\langle\lambda_j,T\rangle=0.
\]
The isotropy is therefore connected and linear, and all effective
leaves are Euclidean.

For an arbitrary configuration in $\Hh^m$, the LVM mechanism retained
here is the real convex action.  A quaternionic rank-one foliation
requires the additional Frobenius condition.  In the invertible linear
case Theorem~\ref{thm:linear-frobenius-rigidity} reduces it to the scalar
Hopf model.

\section{Further structural results}
\label{sec:further-results}

\subsection{The integral cohomology ring}

The generalized Hochster decomposition determines the additive
(co)homology of the total space
\[
Z_{\Hh}(\Lambda)\cong(D^4,\Ss^{3})^{K_{P_\Lambda}}.
\]
The quotient is the base of the principal bundle
\[
\Ss^{3}\longrightarrow Z_{\Hh}(\Lambda)
\longrightarrow N_{\Hh}(\Lambda),
\]
and its Gysin sequence gives the degree-four calculation used in
Theorem~\ref{thm:H4}.  A general closed formula for the integral
cohomology ring of $N_{\Hh}(\Lambda)$ is not obtained here.

There is a structural reason that the usual toric Borel calculation
does not apply directly.  The coordinatewise group $\Sp(1)^n$ acts on
the right, whereas the free $\Sp(1)$ action defining
$N_{\Hh}(\Lambda)$ acts diagonally on the left.  The latter is therefore
not the action of a fixed diagonal subgroup of the former.  In
particular, the quotient cannot be computed by simply restricting the
$\Sp(1)^n$-equivariant Stanley--Reisner description along a diagonal
subgroup.

\begin{problem}
Determine the integral cohomology ring
$H^*(N_{\Hh}(\Lambda);\Zz)$ in terms of the face complex
$K_{P_\Lambda}$ and the Euler class of the principal $\Ss^{3}$ bundle.
\end{problem}

\subsection{Dependence on the labelled combinatorial type}

The quotient model proves topological invariance under labelled
combinatorial equivalence, as in Theorem~\ref{thm:combinatorial}.  A
smooth refinement requires compatibility of the chosen smooth corner
structures and of the induced equivariant smoothings of the
polyhedral-product models.  Since that verification is not needed for
the results below, we do not use a smooth classification statement here.

\begin{problem}
Determine whether labelled combinatorial equivalence of the associated
simple polytopes, with indispensable coordinates matched, always
determines the diffeomorphism type of the corresponding LVMQ manifolds.
\end{problem}

\subsection{The smooth wall-crossing problem}

Section~\ref{thm:wall} gives the combinatorial flip of the associated
simple polytope.  For the quaternionic moment-angle total space the
corresponding local replacement is expected to have the form
\[
\Ss^{4a-1}\times(\Ss^{3})^{4m}\times D^{4b}
\quad\longleftrightarrow\quad
D^{4a}\times(\Ss^{3})^{4m}\times \Ss^{4b-1}.
\]
To obtain a smooth surgery theorem for the LVMQ quotient one must
construct the equivariant tubular neighborhoods and verify the gluing
map with the diagonal left $\Sp(1)$ action.  These data are not
determined by the combinatorial flip alone.

\begin{problem}
Give an explicit diagonal-$\Sp(1)$-equivariant local model for a generic
wall crossing and determine the induced smooth surgery on
$N_{\Hh}(\Lambda)$.
\end{problem}

\subsection{An infinite family of Einstein LVMQ manifolds}

The Einstein question has an affirmative answer beyond the minimal case.
The key is to use indispensable coordinates.

\begin{lemma}[Removing indispensable coordinates]
\label{lem:ghost-splitting}
Suppose that $k\ge1$ coordinates are indispensable.  Let
$Z_{\mathrm{red}}$ denote the quaternionic moment-angle space obtained
after removing the corresponding ghost vertices.  Then there are
homeomorphisms
\[
Z_{\Hh}(\Lambda)
\cong
Z_{\mathrm{red}}\times(\Ss^{3})^k
\]
and
\[
N_{\Hh}(\Lambda)
\cong
Z_{\mathrm{red}}\times(\Ss^{3})^{k-1}.
\]
\end{lemma}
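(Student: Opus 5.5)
The plan is to work entirely in the quotient model used in the proof of Theorem~\ref{thm:polyprod}, namely
\[
Z_{\Hh}(\Lambda)\cong(\Sp(1)^n\times P)/\!\sim,\qquad P=P_\Lambda,
\]
where $(u,x)\sim(u',x)$ exactly when $u_i=u_i'$ for all $i$ with $x_i>0$. Reorder the coordinates so that the indispensable ones are $1,\ldots,k$, and write $u=(v,w)$ with $v\in\Sp(1)^k$ and $w\in\Sp(1)^{n-k}$.

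\textbf{First homeomorphism.} Since $x_i>0$ on all of $P$ for $i\le k$, the relation $\sim$ never identifies different values of $v$. It only involves the $w$-components, and these are collapsed along the genuine facets $x_j=0$, $j>k$. Hence
\[
(\Sp(1)^n\times P)/\!\sim\;\cong\;(\Sp(1)^k)\times\bigl((\Sp(1)^{n-k}\times P)/\!\sim_{\mathrm{red}}\bigr),
\]
and the second factor is by definition $Z_{\mathrm{red}}$. This is the ghost-vertex splitting already noted in Theorem~\ref{thm:polyprod}. Concretely, $Z_{\mathrm{red}}$ is the polyhedral product $(D^4,\Ss^3)^{K'}$ on the non-ghost vertices, or equivalently the model built on the same polytope $P$ using only the facets $x_j=0$, $j>k$. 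One point needs care: the radii $|q_i|$ of the indispensable coordinates need not be constant on $P$. This does no harm, because they are continuous functions of $x\in P$ and are already recorded in the $P$-factor of $Z_{\mathrm{red}}$. Only their phases split off.

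\textbf{Second homeomorphism.} Next I transport the diagonal left action to the model. In polar form $q_i=\sqrt{x_i}\,u_i$, and the action $a\cdot q=(aq_1,\ldots,aq_n)$ becomes $a\cdot(u,x)=(au_1,\ldots,au_n,x)$. This action respects $\sim$, since left multiplication is a bijection of $\Sp(1)$ in each slot. Because $q_1$ never vanishes, I use it to build a global slice:
\[
\Sigma=\{[u,x]:u_1=1\}.
\]
The map
\[
[u,x]\longmapsto[u_1^{-1}u,\,x]
\]
is continuous and constant on $\Sp(1)$-orbits. Its restriction to $\Sigma$ is the identity, and each orbit meets $\Sigma$ exactly once, because $au_1=1$ forces $a=u_1^{-1}$. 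Hence $N_{\Hh}(\Lambda)\cong\Sigma$; explicitly, $[q]\mapsto(\overline{q}_1/|q_1|)\,q$. Finally, $\Sigma$ is the first model with the factor $v_1$ set equal to $1$, so the first homeomorphism gives
\[
\Sigma\cong(\Sp(1))^{k-1}\times Z_{\mathrm{red}}.
\]

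\textbf{Expected obstacle.} The delicate step is the first homeomorphism: checking that the facet collapse genuinely involves only the $w$-coordinates, so that $Z_{\mathrm{red}}$ is well defined even though the indispensable radii vary over $P$. The slice argument is routine by comparison; it needs only continuity of $q\mapsto\overline q_1/|q_1|$, which holds because $q_1\ne0$ on the compact space $Z_{\Hh}(\Lambda)$.
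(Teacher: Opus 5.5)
Your proof is correct and follows essentially the paper's route. In both, the indispensable (ghost-vertex) phases split off as $\Ss^{3}$ factors in the polyhedral-product model of Theorem~\ref{thm:polyprod}, and your slice $\{u_1=1\}$ is exactly the paper's gauge map $[z,g_1,\ldots,g_k]\mapsto(g_1^{-1}z,g_1^{-1}g_2,\ldots,g_1^{-1}g_k)$, which uses the first indispensable phase. Your version is somewhat more explicit: you note that only the phases split off while the varying radii stay in $P$, and you check that the slice meets each diagonal orbit exactly once.
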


\begin{proof}
An indispensable coordinate is a ghost vertex of the simplicial complex
and contributes an $\Ss^{3}$ factor to the polyhedral product.  Together
with Theorem~\ref{thm:polyprod}, this gives the first homeomorphism.

For the second, write a point in the product model as
\[
(z,g_1,\ldots,g_k).
\]
The diagonal left $\Sp(1)$ action can be gauged by the first factor:
\[
[z,g_1,\ldots,g_k]
\longmapsto
\bigl(g_1^{-1}z,\,
g_1^{-1}g_2,\ldots,g_1^{-1}g_k\bigr).
\]
This gives the stated homeomorphism of quotient spaces.
\end{proof}

The required simplex configurations can be realized explicitly.

\begin{lemma}[Simplex realization with indispensable coordinates]
\label{lem:simplex-realization}
Let $p=4m$ and $d\ge1$.  In $\Rr^p$ take
\[
\lambda_j=e_j\quad(1\le j\le p),
\qquad
\lambda_{p+\ell}=-(1,\ldots,1)
\quad(1\le\ell\le d+1).
\]
Then the configuration is admissible, the first $p$ coordinates are
precisely the indispensable ones, and its associated polytope is affinely
isomorphic to $\Delta^d$.
\end{lemma}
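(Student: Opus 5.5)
The plan is to verify the statement directly from the definitions: first admissibility, then the explicit form of $P_\Lambda$, and then read off the indispensable coordinates from that form.

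\emph{Siegel condition.} Put $s=\sum_{\ell}x_{p+\ell}$. The vector equation $\sum_i\lambda_ix_i=0$ reads coordinatewise
\[
x_j-s=0,\qquad 1\le j\le p,
\]
so $x_j=s$ for all $j\le p$. The normalization $\sum_ix_i=1$ then gives $ps+s=1$, that is, $s=1/(p+1)$. In particular the point with $x_j=1/(p+1)$ for $j\le p$ and all the remaining weight $1/(p+1)$ placed on one of the last coordinates shows $0\in\conv\Lambda$.

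\emph{Weak hyperbolicity.} Suppose $0\in\conv\{\lambda_i:i\in I\}$. Since every $\lambda_i$ with $i>p$ has negative coordinates, the same computation applies to the coefficients: they must satisfy $x_j=s>0$ for every $j\le p$. Hence $I\supseteq\{1,\ldots,p\}$, and $I$ must contain at least one index $>p$ because $s>0$. Therefore $|I|\ge p+1>p$.

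\emph{Full affine rank.} The vectors $e_1,\ldots,e_p$ together with $-(1,\ldots,1)$ are affinely independent, since $0$ lies in the interior of their convex hull. So the affine span is all of $\Rr^p$.

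\emph{The polytope.} By the computation above,
\[
P_\Lambda=\Bigl\{x: x_1=\cdots=x_p=\tfrac1{p+1},\ \ x_{p+\ell}\ge0,\ \sum_{\ell=1}^{d+1}x_{p+\ell}=\tfrac1{p+1}\Bigr\}.
\]
This is a scaled standard simplex in the last $d+1$ coordinates, translated by a constant vector, hence affinely isomorphic to $\Delta^d$; its dimension $d=n-p-1$ is as expected.

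\emph{Indispensable coordinates.} For $j\le p$ we have $x_j=1/(p+1)>0$ on all of $P_\Lambda$, so these coordinates are indispensable. For each $\ell$, the vertex of the simplex concentrated on a different index $\ell'\ne\ell$ has $x_{p+\ell}=0$; such an $\ell'$ exists because $d\ge1$. So the last $d+1$ coordinates are not indispensable. This is the only place where the hypothesis $d\ge1$ enters, and it is the step to watch.

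No step is a genuine obstacle; the only care needed is the bookkeeping in this last verification.
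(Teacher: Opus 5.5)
Your proof is correct and follows the same route as the paper. The coordinate equations force $x_j=\sum_{\ell}x_{p+\ell}=1/(p+1)$ for $j\le p$, which at once gives the scaled $d$-simplex and the indispensability of the first $p$ coordinates; applied to coefficients supported on a subset $I$, it also gives weak hyperbolicity, and $d\ge1$ is used exactly to make each of the last $d+1$ coordinates vanish somewhere. Your extra check of full affine rank, which the paper leaves implicit, is a welcome addition, but justify it by noting that the affine span contains $0$ and every $e_j$ rather than by $0$ lying in the interior of the hull, since interiority already presupposes full dimensionality.
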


\begin{proof}
Write the polytope coordinates as
$(x_1,\ldots,x_p,y_1,\ldots,y_{d+1})$.  The weight equations are
\[
x_j=\sum_{\ell=1}^{d+1}y_\ell=:S
\qquad(1\le j\le p),
\]
and the normalization gives $(p+1)S=1$.  Hence
\[
x_j=\frac1{p+1},
\qquad
 y_\ell\ge0,
\qquad
\sum_{\ell=1}^{d+1}y_\ell=\frac1{p+1}.
\]
Thus $P_\Lambda$ is a scaled $d$-simplex, and the first $p$ coordinates
are positive everywhere.  Since $d\ge1$, omitting any one of the last
$d+1$ vectors still leaves at least one copy of $-(1,\ldots,1)$, so those
coordinates are not indispensable.

Finally, if the origin lies in the convex hull of a subconfiguration,
that subconfiguration must contain at least one negative vector.  If the
total coefficient of the negative vectors is $B>0$, the $j$th coordinate
equation forces the coefficient of every $e_j$ to equal $B$.  Hence all
$p$ positive basis vectors and at least one negative vector occur, so the
subconfiguration has at least $p+1$ elements.  This is weak hyperbolicity,
and the displayed convex relation also proves the Siegel condition.
\end{proof}

For this configuration the reduced quaternionic moment-angle manifold of
the $d$-simplex is
\[
(D^4,\Ss^{3})^{\partial\Delta^d}\cong \Ss^{4d+3}.
\]

\begin{theorem}[Sphere-product LVMQ manifolds]
\label{thm:einstein-family}
For every $m\ge1$ and every $d\ge1$ there are admissible configurations
for which
\[
N_{\Hh}(\Lambda)
\cong
\Ss^{4d+3}\times(\Ss^{3})^{4m-1}.
\]
\end{theorem}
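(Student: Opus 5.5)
The plan is to take the explicit configuration of Lemma~\ref{lem:simplex-realization}, with $p=4m$ and $n=p+d+1$: that is, $\lambda_j=e_j$ for $1\le j\le p$ and $\lambda_{p+\ell}=-(1,\ldots,1)$ for $1\le\ell\le d+1$. That lemma already supplies three facts. The configuration is admissible; the first $k=p=4m$ coordinates are exactly the indispensable ones; and $P_\Lambda$ is affinely a $d$-simplex.

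Before using the earlier results we must check full affine rank $p$, which Theorem~\ref{thm:smooth} requires. This holds because the augmented vectors $(e_j,1)$ and $(-(1,\ldots,1),1)$ already span $\Rr^{p+1}$. So $Z_{\Hh}(\Lambda)$ is a smooth manifold, and the polyhedral-product description of Theorem~\ref{thm:polyprod} applies.

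Next we apply Lemma~\ref{lem:ghost-splitting} with $k=4m\ge1$. This gives
\[
N_{\Hh}(\Lambda)\cong Z_{\mathrm{red}}\times(\Ss^{3})^{4m-1}.
\]
Here $Z_{\mathrm{red}}=(D^4,\Ss^3)^{K}$, and $K$ is the facet complex of the simplex $P_\Lambda\cong\Delta^d$ on the remaining $d+1$ vertices. The facet nerve of a $d$-simplex is $\partial\Delta^d$, since any $d$ facets meet but all $d+1$ do not. Hence $Z_{\mathrm{red}}=(D^4,\Ss^3)^{\partial\Delta^d}$. It remains to identify this with $\Ss^{4d+3}$. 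We would do this by the standard decomposition
\[
\partial(D^4)^{d+1}=\bigcup_{i}D^4\times\cdots\times\Ss^3\times\cdots\times D^4,
\]
with the $\Ss^3$ in the $i$th slot. This union is exactly the polyhedral product over $\partial\Delta^d$, and it is the boundary of the $4(d+1)$-dimensional cube, hence homeomorphic to $\Ss^{4d+3}$.

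A more direct alternative avoids the polyhedral formalism entirely. From the proof of Lemma~\ref{lem:simplex-realization}, a point of $Z_{\Hh}(\Lambda)$ satisfies $|q_j|^2=1/(p+1)$ for $j\le p$ and $\sum_\ell|q_{p+\ell}|^2=1/(p+1)$. So
\[
Z_{\Hh}(\Lambda)=(\Ss^3_{\rho})^{p}\times\Ss^{4d+3}_{\rho},\qquad \rho=(p+1)^{-1/2},
\]
and this identification is a diffeomorphism, not merely a homeomorphism. The diagonal left $\Sp(1)$ action is then gauged away using the first $\Ss^3$ factor, as in the proof of Lemma~\ref{lem:ghost-splitting}. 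Concretely, the map $(g_1,\ldots,g_p,z)\mapsto(g_1^{-1}g_2,\ldots,g_1^{-1}g_p,g_1^{-1}z)$, after rescaling by $\rho$, is a smooth section-type trivialization. It yields a diffeomorphism $N_{\Hh}(\Lambda)\cong(\Ss^3)^{4m-1}\times\Ss^{4d+3}$.

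I would present this direct computation as the main proof and cite the polyhedral route as a consistency check. The only step needing care is verifying that the gauging map is well defined and bijective on orbits. Well-definedness holds because left multiplication by $a$ replaces $g_1$ by $ag_1$ and each entry by $a\cdot$, so each expression $g_1^{-1}(\cdot)$ is invariant. Bijectivity holds because the inverse is $(h,w)\mapsto[1,h,w]$. I expect no real obstacle; the mildly delicate point is only the bookkeeping that the indispensable coordinates have constant modulus, which Lemma~\ref{lem:simplex-realization} already provides.
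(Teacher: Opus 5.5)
Your proposal is correct and is essentially the paper's own proof. The paper also takes the configuration of Lemma~\ref{lem:simplex-realization} with $p=4m$ and reads off $Z_{\Hh}(\Lambda)\cong\Ss^{4d+3}\times(\Ss^3)^{p}$ from $|q_j|^2=1/(p+1)$ and $\sum_\ell|q_{p+\ell}|^2=1/(p+1)$; it then uses the first $\Ss^3$ factor as a global gauge for the diagonal left $\Sp(1)$ action, giving a diffeomorphism $N_{\Hh}(\Lambda)\cong\Ss^{4d+3}\times(\Ss^3)^{4m-1}$. Your additional full-affine-rank check and your polyhedral route through $(D^4,\Ss^3)^{\partial\Delta^d}\cong\Ss^{4d+3}$ and Lemma~\ref{lem:ghost-splitting} are valid consistency checks but not needed; the latter yields only a homeomorphism.
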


\begin{proof}
Use the configuration of Lemma~\ref{lem:simplex-realization}, with
$p=4m$.  Its equations give
\[
|q_j|^2=\frac1{p+1}\qquad(1\le j\le p)
\]
and
\[
\sum_{\ell=1}^{d+1}|q_{p+\ell}|^2=\frac1{p+1}.
\]
Hence, after the harmless constant rescaling of the factors,
\[
Z_{\Hh}(\Lambda)
\cong
\Ss^{4d+3}\times(\Ss^3)^p
\]
as smooth manifolds.  The diagonal left $\Sp(1)$ action is
\[
a\cdot(z,g_1,\ldots,g_p)
=
(az,ag_1,\ldots,ag_p).
\]
Since $p=4m\ge1$, use the first $\Ss^3$ factor as a global gauge:
\[
[z,g_1,\ldots,g_p]
\longmapsto
\bigl(g_1^{-1}z,\,
g_1^{-1}g_2,\ldots,g_1^{-1}g_p\bigr).
\]
This is a smooth diffeomorphism
\[
N_{\Hh}(\Lambda)
\cong
\Ss^{4d+3}\times(\Ss^3)^{p-1}
=
\Ss^{4d+3}\times(\Ss^3)^{4m-1}.
\]
\end{proof}

\begin{corollary}\label{cor:homogeneous-einstein-family}
Every manifold in Theorem~\ref{thm:einstein-family} admits a homogeneous
Einstein metric of positive scalar curvature.  Hence there are infinitely
many nonminimal LVMQ manifolds with positive Einstein metrics.
\end{corollary}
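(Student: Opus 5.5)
By Theorem~\ref{thm:einstein-family}, every manifold in question is diffeomorphic to
\[
M=\Ss^{4d+3}\times(\Ss^{3})^{4m-1},
\qquad m\ge1,\ d\ge1 .
\]
So it suffices to put a homogeneous Einstein metric of positive scalar curvature on $M$. The Einstein property and the sign of the scalar curvature are invariant under pulling back by a diffeomorphism. Homogeneity also transfers, because the isometry group is transported along with the metric.

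The plan is to use a product of round metrics with matched Einstein constants. A round sphere $\Ss^k$ of radius $\rho$, with $k\ge2$, is Einstein with $\operatorname{Ric}=\frac{k-1}{\rho^2}g$. Give $\Ss^{4d+3}$ radius $\rho_1$ and each $\Ss^3$ factor radius $\rho_2$. The Ricci tensor of a Riemannian product is the direct sum of the Ricci tensors of the factors. Hence the product is Einstein exactly when
\[
\frac{4d+2}{\rho_1^2}=\frac{2}{\rho_2^2},
\]
for example with $\rho_2=1$ and $\rho_1=\sqrt{2d+1}$. The resulting Einstein constant is $2>0$. The scalar curvature is then $2\dim M=2(4d+4m)>0$.

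Next I check homogeneity. The group
\[
SO(4d+4)\times SO(4)^{4m-1}
\]
acts transitively and isometrically on this product. Equivalently, $(\Ss^3)^{4m-1}=\Sp(1)^{4m-1}$ carries a bi-invariant metric and $\Ss^{4d+3}=\Sp(d+1)/\Sp(d)$, matching the remark in the minimal case.

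For the ``infinitely many'' part, note that the dimension $4d+4m$ grows with $d$. Alternatively, for fixed $m$ the manifolds are pairwise non-homeomorphic, since $H^{4d+3}$ distinguishes them as $d$ varies. They are nonminimal because $P_\Lambda\cong\Delta^d$ has dimension $d\ge1$, so $n=4m+d+1>4m+1$.

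There is essentially no obstacle. The only point needing care is that the metric is transported through the diffeomorphism of Theorem~\ref{thm:einstein-family}, not restricted from the quadric; as the minimal-case remark already notes, the submersion metric need not be Einstein.
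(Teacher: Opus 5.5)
Your proof is correct and follows the paper's argument: you take round metrics on the factors with radii matched by $\rho_1^2=(2d+1)\rho_2^2$, so that all factors have the same Einstein constant, and get homogeneity under $SO(4d+4)\times SO(4)^{4m-1}$. One harmless slip: $\dim M=(4d+3)+3(4m-1)=4d+12m$, not $4d+4m$, so the scalar curvature is $2(4d+12m)$; this affects neither positivity nor the infinitude argument.
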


\begin{proof}
Give the sphere factors their standard round metrics.  If $\rho_1$ is
the radius of $\Ss^{4d+3}$ and $\rho_2$ is the common radius of the
$\Ss^3$ factors, then
\[
\operatorname{Ric}_{\Ss^{4d+3}(\rho_1)}
=
\frac{4d+2}{\rho_1^2}g_1,
\qquad
\operatorname{Ric}_{\Ss^3(\rho_2)}
=
\frac{2}{\rho_2^2}g_2.
\]
Choose
\[
\rho_1^2=(2d+1)\rho_2^2.
\]
All factors then have the same Einstein constant, so their Riemannian
product is Einstein.  The metric is homogeneous under
\[
SO(4d+4)\times SO(4)^{\,4m-1}.
\]
\end{proof}

\begin{remark}
The corollary concerns the standard homogeneous product metric on the
displayed product of spheres.  It does not assert that the metric obtained
by Riemannian submersion from the original quadratic realization is
Einstein.
\end{remark}

\subsection{No LVMQ manifold is quaternionic K\"ahler}

\begin{theorem}[Quaternionic K\"ahler obstruction]
\label{thm:no-QK}
No positive-dimensional LVMQ manifold admits a quaternionic K\"ahler
metric.
\end{theorem}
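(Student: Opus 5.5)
The plan is to use the cohomological obstruction recalled before the corollary on the minimal case: a quaternionic K\"ahler manifold of real dimension $4r\ge 8$ carries a parallel closed fundamental $4$-form $\Omega$ with $\Omega^r$ a volume form, so on a compact manifold $[\Omega]\in H^4(N;\Rr)$ must satisfy $[\Omega]^r\neq0$. First, the definition applies in every case: $\dim N_{\Hh}(\Lambda)=4(n-m-1)$ and admissibility gives $n\ge 4m+1$, hence $r=n-m-1\ge 3m\ge3$ and the dimension is at least $12$. It therefore suffices to show that every class $x\in H^4(N_{\Hh}(\Lambda);\Rr)$ satisfies $x^r=0$. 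I would split into two cases, according to whether indispensable coordinates are present.

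\emph{Case 1: there are indispensable coordinates, say $k\ge1$ of them.} By Lemma~\ref{lem:ghost-splitting}, $N_{\Hh}(\Lambda)\cong Z_{\mathrm{red}}\times(\Ss^3)^{k-1}$. Since $H^1(\Ss^3)=H^2(\Ss^3)=0$, the K\"unneth formula gives $H^4(N;\Rr)\cong H^4(Z_{\mathrm{red}};\Rr)$. Here $Z_{\mathrm{red}}$ is a quaternionic moment-angle polyhedral product without ghost vertices. In the generalized Hochster decomposition (Theorem~\ref{thm:hochster}), a summand indexed by $J$ contributes in degree $4$ only through $\widetilde H^{3-3|J|}((K)_J)$. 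For $|J|=1$ this group is $\widetilde H^0$ of a single vertex, which is zero because the vertex is not a ghost. For $|J|\ge2$ the degree is negative. Hence $H^4(N;\Rr)=0$, so $[\Omega]=0$, which is a contradiction; this recovers the minimal case as well.

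\emph{Case 2: there are no indispensable coordinates.} By Theorem~\ref{thm:H4}, $H^4(N;\Rr)$ is spanned by $e=i^\ast u$, so $[\Omega]=c\,e$ and $[\Omega]^r=c^r\,i^\ast(u^r)$. I will show that $i^\ast(u^r)=0$ in top degree by proving $i_\ast[N]=0$ in $H_{4r}(\Hh P^{n-1};\Rr)$. Indeed, $N=f^{-1}(0)$ for the map $f:\Hh P^{n-1}\to\Rr^{4m}$, and $0$ is a regular value with trivialized normal bundle (Proposition~\ref{prop:tangent}). Hence the Poincar\'e dual of $i_\ast[N]$ is $f^\ast\tau$, where $\tau\in H^{4m}_c(\Rr^{4m})$ is the compactly supported generator. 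Because $\Hh P^{n-1}$ is compact, the image of $f$ is bounded, and we may represent $\tau$ by a bump form supported away from that image. Then $f^\ast\tau=0$; equivalently, $N$ is cobordant to the empty preimage of a far-away regular value. Therefore $\langle i^\ast u^r,[N]\rangle=\langle u^r,i_\ast[N]\rangle=0$, so $[\Omega]^r=0$, which contradicts $\Omega^r$ being a volume form. The orientation of $N$ needed here comes from the framing of the normal bundle together with the orientation of $\Hh P^{n-1}$.

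The step I expect to need the most care is the degree-four computation in Case~1. One must check that the splitting of Lemma~\ref{lem:ghost-splitting} and the Hochster formula are applied to $Z_{\mathrm{red}}$ with the ghost vertices genuinely removed, so that no $|J|=1$ summand survives. The argument in Case~2 is a routine Poincar\'e-duality check once the regular-value description of $N$ is in hand.
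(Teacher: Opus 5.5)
Your proposal is correct and follows the paper's proof essentially step for step: the same split according to indispensable coordinates, the ghost-splitting lemma with the Hochster decomposition and K\"unneth giving $H^4(N_{\Hh}(\Lambda);\Rr)=0$ in Case~1, and in Case~2 the vanishing of the Poincar\'e dual of $N_{\Hh}(\Lambda)=f^{-1}(0)$ in $\Hh P^{n-1}$, which forces $e^r=0$ (your far-away compactly supported form expresses the same fact the paper states as the vanishing of the Euler class of a trivial bundle). One small repair: in the K\"unneth step you must also kill the cross term $H^1(Z_{\mathrm{red}};\Rr)\otimes H^3((\Ss^3)^{k-1};\Rr)$ and the term $H^4((\Ss^3)^{k-1};\Rr)$, and these vanish because $Z_{\mathrm{red}}$ is $2$-connected and $H^*((\Ss^3)^{k-1})$ is an exterior algebra on degree-$3$ classes, not merely because $H^1(\Ss^3)=H^2(\Ss^3)=0$.
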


\begin{proof}
Write
\[
\dim N_{\Hh}(\Lambda)=4r.
\]
In our range $m\ge1$ and $n\ge4m+1$, so $r\ge3$; in particular we are
above real dimension four.

We separate two cases.

\smallskip
\noindent\emph{Case 1: there is at least one indispensable coordinate.}
By Lemma~\ref{lem:ghost-splitting},
\[
N_{\Hh}(\Lambda)
\cong
Z_{\mathrm{red}}\times(\Ss^{3})^{k-1},
\]
where $Z_{\mathrm{red}}$ has no indispensable coordinates.
The generalized Hochster formula gives
\[
H^4(Z_{\mathrm{red}};\Rr)=0,
\]
and the K\"unneth theorem therefore gives
\[
H^4(N_{\Hh}(\Lambda);\Rr)=0.
\]
A quaternionic K\"ahler manifold of dimension at least eight carries a
parallel fundamental $4$-form $\Omega$ whose top power is a nonzero
multiple of the Riemannian volume form \cite{Salamon}.  Hence
\[
[\Omega]\neq0\in H^4(N_{\Hh};\Rr),
\]
a contradiction.

\smallskip
\noindent\emph{Case 2: there are no indispensable coordinates.}
By Theorem~\ref{thm:H4},
\[
H^4(N_{\Hh};\Zz)=\Zz e.
\]
Let
\[
i:N_{\Hh}\hookrightarrow\Hh P^{n-1}
\]
be the natural embedding and let
$u\in H^4(\Hh P^{n-1};\Zz)$ be the standard generator, so that
\[
e=i^\ast u.
\]

The manifold $N_{\Hh}$ is the regular zero set of the globally defined
map
\[
f:\Hh P^{n-1}\longrightarrow\Rr^{4m}.
\]
Equivalently, it is the zero set of a transverse section of the trivial
oriented bundle
\[
\Hh P^{n-1}\times\Rr^{4m}.
\]
Its Poincar\'e dual in $\Hh P^{n-1}$ is therefore the Euler class of a
trivial positive-rank bundle, hence zero:
\[
\operatorname{PD}_{\Hh P^{n-1}}[N_{\Hh}]=0.
\]
Consequently
\[
\left\langle e^r,[N_{\Hh}]\right\rangle
=
\left\langle
u^r\smile\operatorname{PD}[N_{\Hh}],
[\Hh P^{n-1}]
\right\rangle
=0.
\]
Since $H^{4r}(N_{\Hh};\Rr)$ is one-dimensional, this means
\[
e^r=0.
\]

If a quaternionic K\"ahler metric existed, its parallel fundamental
$4$-form would satisfy
\[
[\Omega]=c\,e
\]
for some nonzero real number $c$, because $H^4(N_{\Hh};\Rr)$ is
one-dimensional.  But then
\[
[\Omega]^r=c^r e^r=0,
\]
whereas $\Omega^r$ is a nonzero multiple of the volume form.  This is
again a contradiction.
\end{proof}

\begin{corollary}
The homogeneous Einstein metrics of
Corollary~\ref{cor:homogeneous-einstein-family} are not quaternionic
K\"ahler.
\end{corollary}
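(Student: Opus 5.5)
The corollary asks us to rule out quaternionic K\"ahler metrics among the homogeneous Einstein metrics of Corollary~\ref{cor:homogeneous-einstein-family}. The plan is to reduce this to Theorem~\ref{thm:no-QK}. The manifolds in question are
\[
\Ss^{4d+3}\times(\Ss^3)^{4m-1}\cong N_{\Hh}(\Lambda),
\]
realized by Theorem~\ref{thm:einstein-family} from the configuration of Lemma~\ref{lem:simplex-realization}. Theorem~\ref{thm:no-QK} says no metric at all on a positive-dimensional LVMQ manifold is quaternionic K\"ahler, so the Einstein metrics cannot be. The only points to check are the following.
\begin{enumerate}[label=\textnormal{(\alph*)}]
\item The metric lives on a manifold diffeomorphic to an LVMQ manifold. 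A quaternionic K\"ahler structure transported through the diffeomorphism of Theorem~\ref{thm:einstein-family} would give one on $N_{\Hh}(\Lambda)$.
\item The dimension is positive. It equals $4d+3+3(4m-1)=4(d+3m)$, which is at least $16$.
\end{enumerate}

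As a self-contained alternative, I would argue directly with Case~1 of Theorem~\ref{thm:no-QK}. This is the right case, since the first $4m$ coordinates are indispensable. By K\"unneth, $H^4(\Ss^{4d+3}\times(\Ss^3)^{4m-1};\Rr)=0$. Every class in the exterior algebra generated by the degree-$3$ classes and one degree-$(4d+3)$ class has degree equal to a sum of $3$'s plus possibly $4d+3$. Degree $4$ is impossible because $4d+3\ge7$ and no sum of $3$'s equals $4$.

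A quaternionic K\"ahler manifold of dimension $4r\ge8$ has a parallel $4$-form $\Omega$ with $\Omega^r$ a nonzero multiple of the volume form, so $[\Omega]\neq0$. This contradicts $H^4=0$.

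The one subtle step is making sure that Einstein quaternionic K\"ahler manifolds of positive scalar curvature are not exempt from the fundamental-form argument. They are not: the fundamental $4$-form exists for any holonomy in $\Sp(r)\Sp(1)$ with $r\ge2$. So the proof is a short citation and involves no real obstacle.
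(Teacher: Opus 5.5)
Your proposal is correct and follows the paper's route: the corollary is an immediate consequence of Theorem~\ref{thm:no-QK}, whose obstruction is topological and so rules out every quaternionic K\"ahler metric on the underlying manifold, in particular the homogeneous Einstein ones. Your self-contained check is exactly Case~1 of that proof specialized to $\Ss^{4d+3}\times(\Ss^3)^{4m-1}$: there $H^4(\,\cdot\,;\Rr)=0$, while a quaternionic K\"ahler structure in dimension $4(d+3m)\ge 16$ would force a nonzero degree-four class.
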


\begin{remark}
The obstruction in Theorem~\ref{thm:no-QK} is topological and does not
depend on the particular metric induced from the quadratic
construction.  It therefore rules out every quaternionic K\"ahler metric
on the underlying smooth LVMQ manifold.
\end{remark}

\section{Conclusion}

The quaternionic extension of the LVM construction does not require a
quaternionic analogue of a holomorphic foliation.  The correct global
transversal is produced by an ordinary real $\Rr^{4m}$ action whose convexity
properties are identical to those used in the classical theory.  The compact
quotient
\[
N_{\Hh}(\Lambda)
=
Z_{\Hh}(\Lambda)/\Ss^{3}
\]
is therefore obtained by the same transversal mechanism as in the LVM construction.

The topology is controlled by two familiar objects: the quaternionic
moment-angle total space and the Hopf $\Ss^{3}$ bundle.  This gives the connectivity and characteristic-class results proved
above.  The LVMQ manifolds are highly connected.  When there are no
indispensable coordinates, their first nonzero cohomology group is
infinite cyclic in degree four and is generated by the Euler class of
the Hopf bundle.  The same class controls the first
Pontryagin class.

The relation with quaternionic toric geometry is subtler.  Noncommutativity
prevents the direct use of integer matrices to construct kernel subgroups.
Characteristic pairs and degree-four Euler classes also occur in local
quaternionic toric actions.  The class $e$ therefore gives a concrete
invariant for comparison with quaternionic toric topology.

The complementary Poincar\'e discussion contains two distinct
constructions.  The canonical real dilation action is abelian and gives
the trace foliation used in the convex construction.  The quaternionic
rank-one distribution $X\Hh$ is defined separately and is subject to a
Frobenius condition.
For an invertible right-$\Hh$-linear field $X(q)=Aq$ in $\Hh^n$,
involutivity forces $A=\mu I$ with $\mu\in\Rr^\ast$, so the spherical
trace is exactly the Hopf foliation of $\Ss^{4n-1}$ by $\Ss^{3}$'s.
Equivalently, the punctured leaves are copies of $\Hh^\ast$, each meets
every centered sphere in a single $\Ss^{3}$, and both the punctured orbit
space and the spherical trace leaf space are $\Hh P^{n-1}$; in this
linear model the trace foliation has trivial holonomy.  For nonlinear integrable
perturbations with positive scalar linear part, the small-sphere trace
quotient remains a compact Hausdorff smooth manifold fibred by $\Ss^{3}$'s
and has the homotopy type of $\Hh P^{n-1}$.  There is no orbifold
isotropy because the leaf holonomy is trivial.  The smooth structure of
the quotient is not determined by this argument and may a priori be
nonstandard.  For the real dilation trace foliation, the tangent spaces lie in the
quaternionic contact distribution and are simultaneously isotropic for
its three fundamental $2$-forms.  The projected leaves in quaternionic
projective space are therefore totally real.  The induced round metric
on a trace leaf is given by Theorem~\ref{thm:q-trace-metric}.  A related
covariance Hessian formula holds for the radially normalized ambient
orbit, which coincides with the trace leaf only under the additional
condition stated in Proposition~\ref{prop:q-hessian-normalized} and the
following remark.  The standard quaternionic contact and Kraines forms
are not basic for the trace foliation.  On the Siegel side, however,
the Kraines form restricts to a $4$-plectic form on the explicit
sphere-product family of Theorem~\ref{thm:lvmq-quaternionic-toric},
which therefore gives an infinite family of quaternionic toric LVMQ
manifolds.

\medskip

\section*{Acknowledgments}

The author is grateful to Aubin Arroyo, Carlos Cabrera, and Jos\'e Seade
for many valuable mathematical discussions and for the collaboration
that led to our earlier work on holomorphic linear actions and
Poincar\'e dynamics.  Some of the questions considered in the
Poincar\'e part of the present paper were suggested by the point of view
developed in that joint work.  The author would especially like to
acknowledge the stimulating atmosphere of that collaboration, which
helped clarify several ideas that are developed here in a different,
quaternionic setting. I also would like to thank Laurent Meersseman and Daniele Zuddas for previous discussions on the topics of this paper.
\section*{Funding}

The author acknowledges {\bf Proyecto PAPIIT IN103324
(DGAPA, UNAM, M\'exico)} for its financial support.

\medskip
\noindent\textbf{Use of Generative-AI tools declaration.}
During the preparation of this manuscript in 2026, the author used ChatGPT
(OpenAI) and Claude (Anthropic) for proofreading, checking calculations and
mathematical arguments, and improving the clarity and exposition of the text.
The tools were accessed through their standard publicly available interfaces.
All AI-assisted suggestions were reviewed and, where appropriate, independently
verified by the author.  The author assumes full responsibility for the
mathematical content and for the final version of the manuscript.

\bigskip
\noindent
\textsc{Alberto Verjovsky}\\
Instituto de Matem\'aticas, Universidad Nacional Aut\'onoma de M\'exico\\
Unidad Cuernavaca, M\'exico\\
\texttt{albertoverjovsky@gmail.com}

\end{document}